\numberwithin{equation}{section}
\numberwithin{figure}{section}
\numberwithin{table}{section}
\renewcommand{\p@subfigure}{\thefigure}
\newtheorem{definition}{Definition}[section]
\newtheorem{theorem}{Theorem}[section]
\newtheorem{lemma}[theorem]{Lemma}
\newtheorem{corollary}[theorem]{Corollary}
\newtheorem{rem}[theorem]{Remark}
\newcommand{\repeatable}[2]{\makeatletter \global\expandafter\def\csname repText@#1\endcsname {#2} \makeatother #2}
\newcommand{\repeatxt}[1]{\makeatletter \expandafter\csname repText@#1\endcsname \makeatother}
\newcommand{\usecrop}[2]
{
	\newlength{\cropwidth}
	\setlength{\cropwidth}{\the\textwidth}
	\addtolength{\cropwidth}{#1}
	\newlength{\cropheight}
	\setlength{\cropheight}{\the\textheight}
	\addtolength{\cropheight}{#2}
	\usepackage[width=\the\cropwidth,height=\the\cropheight,center]{crop}
}
\DeclareMathAlphabet{\mathpzc}{OT1}{pzc}{m}{it}
\begin{document}
	
\begin{frontmatter}
	
\title{Randomized LU Decomposition}



\author[EE]{Gil Shabat}
\author[CS]{Yaniv Shmueli}
\author[MT]{Yariv Aizenbud}
\author[CS]{Amir Averbuch\corref{maincor}}\ead{amir@math.tau.ac.il}

\address[EE]{School of Electrical Engineering, Tel Aviv University, Tel Aviv 69978, Israel}
\address[CS]{School of Computer Sciene, Tel Aviv University, Tel Aviv 69978, Israel}
\address[MT]{Department of Applied Mathematics, School of Mathematical Sciences, Tel Aviv University, Tel Aviv 69978, Israel}

\cortext[maincor]{Amir Averbuch, Tel: +972-54-5694455, Fax: +972-3-6422020}


\begin{abstract}
Randomized algorithms play a central role in low rank approximations of large matrices. In this paper, 
the scheme of the randomized SVD is extended to a randomized LU algorithm.
Several error bounds are introduced, that are based on recent results from random matrix theory related
to subgassian matrices. The bounds also improve the existing bounds of already known randomized algorithm for low rank approximation.
The algorithm is fully parallelized and thus can utilize efficiently GPUs without any CPU-GPU data transfer.
Numerical examples, which illustrate the performance of the algorithm 
and compare it to other decomposition methods, are presented.
\end{abstract}

\begin{keyword}
	LU decomposition \sep matrix factorizations \sep random matrices \sep randomized algorithms 
\end{keyword}

\end{frontmatter}

\section{Introduction}
\label{sec:introduction} 
Matrix factorizations and their relations to low rank approximations play a major role in many of today's applications \cite{stewart2000decompositional}. 
In mathematics, matrix decompositions are used for low rank matrix approximations 
that often reveal interesting properties in a matrix. 
Matrix decompositions are used for example in solving linear equations and in finding least squares solutions.
In engineering, matrix decompositions are used in computer vision \cite{elad2006image}, machine learning \cite{mazumder2010spectral}, 
collaborative filtering and Big Data analytics \cite{koren2009matrix}. 
As the size of the data grows exponentially, feasible methods for the analysis of large datasets has gained an increasing interest. 
Such an analysis can involve a factorization step of the input data given as a large sample-by-feature matrix 
or given by a sample affinity matrix \cite{wolf2003learning,COIFMAN2006a,shabataccelerating}.
High memory consumption and the computational complexity of the factorization step are two main reasons for the difficulties in analyzing huge data structures.
Recently, there is an on-going interest in applying mathematical tools that are based on randomization  to overcome these difficulties. 

Some of the randomized algorithms use random projections that project the matrix to a set of random vectors. 
Formally, given a matrix $A$ of size $m \times n$ (assume $m \ge n$) and a random matrix $G$ of size $n \times k$, 
then the product $AG$ is computed to obtain a smaller matrix that potentially captures most of the range of $A$.
In most of these applications, $k$ is set to be much smaller than $n$  to obtain a compact approximation for $A$.

In this paper, we develop a randomized version of the LU decomposition.
Given an $m \times n$ matrix $A$, we seek a lower triangular $m \times k$ matrix $L$ 
and an upper triangular $k \times n$ matrix $U$ such that
\begin{equation}
\label{eq:lu_error}
\Vert LU-PAQ \Vert_2 = C(m,n,k) \sigma_{k+1}(A),
\end{equation}
where $P$ and $Q$ are orthogonal permutation matrices, 
$\sigma_{k+1}(A)$ is the $k+1$ largest singular value of $A$ and $C(m,n,k)$ is a constant that depends  on $m,n$ and $k$.

The interest in a randomized LU decomposition can be motivated (computationally wise)  by two important properties of the classical LU decomposition:
First, it can be applied efficiently to sparse matrices with computation time that depends on the number of non-zero elements. LU decomposition with full pivoting on sparse matrices can generate large regions of 
zeros in the factorized matrices \cite{schenk2000efficient, demmel1999supernodal, davis1997unsymmetric}. Processing of sparse matrices will be treated in a separate paper.
Second, LU decomposition can be fully parallelized \cite{golub2012matrix} which makes it applicable for  running on Graphics Processing Units (GPU).
GPUs are mostly used for computer games, graphics and visualization 
such as movies and 3D display. 
Their powerful computation capabilities can be used for fast matrix computations \cite{kirk2007nvidia}. 

The contributions of the paper are twofold:
A randomized version for LU decomposition, which is based on the randomized SVD template \cite{randecomp,halko2011finding}, is presented. The algorithm is analyzed
and several error bounds are derived. The bounds are based on recent results from random matrix theory for the largest and smallest singular values of random matrices
with subgaussian entries \cite{litvak2005smallest,litvak2010}. This technique is also used to improve the bounds for the randomized SVD.
The randomized LU is fully implemented to run on a standard GPU card without any GPU-CPU data transfer. It enables us to accelerate the algorithm significantly.
We present numerical results that compare our algorithm with other decomposition methods and show that it outperforms them.

The paper is organized as follows: Section \ref{sec:related_work}, overviews 
related work on matrix decomposition and approximation that use randomized methods.
Section \ref{sec:perliminaries} reviews several mathematical results that are needed
for the development of the randomized LU.
Section \ref{sec:random_lu} presents several randomized LU algorithms
and several error bounds  on their approximations are proved.
Section \ref{sec:numerical_results} presents numerical results on the approximation error,
the computational complexity of the algorithm and compares it with other methods.
The performance comparison was done on different types of matrices and by using GPU cards.

\section{Related Work}
\label{sec:related_work}
Efficient matrix decomposition serves as a basis for many
studies and algorithms design for data analysis and applications. Fast randomized matrix decomposition algorithms are used for tracking objects in videos \cite{shabataccelerating}, 
multiscale extensions for data \cite{bermanis2011multiscale} 
and detecting anomalies in network traffic for finding cyber attacks \cite{gild:PHD}, to name some. 
There are randomized versions for many different matrix factorization algorithms \cite{halko2011finding}, compressed sensing \cite{donoho2006compressed} and  least squares \cite{avron2010blendenpik}.

There is a variety of methods and algorithms that factorize a matrix into several matrices.
Typically, the factorized terms have properties such as being triangular, orthogonal, diagonal,
sparse or low rank.
In general, a certain control on the desired approximation error for a factorized matrix is possible. For example, it is achievable by increasing the rank of a low rank approximation or by allowing dense factors for sparse decompositions.

Rank revealing factorization uses permutation matrices on the columns and rows of a martrix $A$
so that the factorized matrices structure have a strong rank portion and a rank deficient portion.
The most known example for approximating an $m\times n$ matrix $A$ by a low rank $k$ matrix is the truncated SVD.
Other rank revealing factorizations can be used to achieve low rank approximations.
For example, both QR and LU factorizations have rank revealing versions  such as
RRQR decomposition~\cite{chan1987rank}, strong RRQR~\cite{gu1996efficient} decomposition,
RRLU decomposition~\cite{pan2000existence}
and strong RRLU decomposition~\cite{miranian2003strong}.

Other matrix factorization methods such as Interpolative Decomposition (ID) \cite{low_rank_comp}  and
CUR decomposition \cite{drineas2008relative}, use columns and rows of the original  matrix $A$ in the factorization process.
Such a property  exposes the most important portions that construct $A$.
An ID factorization of order $k$ of an $m\times n$ matrix $A$  consists of an $m \times k$ matrix
$B$ whose columns consist of a subset of the columns of $A$, as well
as a $k \times n$ matrix $P$, such that a subset of the columns
of $P$ becomes a $k\times k$ identity matrix and $A\approx BP$ such that $\Vert A-BP\Vert\lesssim\mathcal{O}(n,\sigma_{k+1}(A))$.
Usually, $k=\#\{j:\sigma_j(A)\geq\delta\sigma_1(A)\}$ is the numerical rank
of $A$ up to a certain accuracy $\delta>0$.
This selection of $k$
guarantees that the columns of $B$ constitute a well-conditioned
basis for the range of $A$ \cite{low_rank_comp}.

Randomized version for many important algorithms have been developed in order to reduce the computational complexity by approximating
the solution to a desired rank.
These include SVD, QR and ID factorizations \cite{randecomp},
CUR decomposition as a randomized version \cite{drineas2008relative} of the
pseudo-skeleton decomposition, methods for solving least squares
problems \cite{rokhlin2008fast, clarkson2013low, avron2010blendenpik} and low rank approximations \cite{clarkson2013low,achlioptas2007fast}.

In general, randomization methods for matrix factorization  have two steps:
1. A low-dimensional space, which captures most of the ``energy" of $A$, is found using randomization.
2. $A$ is projected onto the retrieved subspace and the projected matrix is factorized~\cite{halko2011finding}.

Several different options exist when random projection matrix is used in the step 1. For example, it can
be a matrix of random signs ($\pm 1$) \cite{clarkson2009numerical,magen2011low}, a matrix of i.i.d Gaussian random variables 
with zero mean and unit variance \cite{randecomp},
a matrix whose columns  are selected randomly from the identity matrix with either uniform or 
non-uniform probability \cite{frieze2004fast,drineas2006fast},
a random sparse matrix designed to enable fast multiplication with a sparse input matrix \cite{clarkson2013low,achlioptas2007fast},
random structured matrices, which use orthogonal transforms such as discrete Fourier transform, 
Walsh-Hadamard transform and more (\cite{rokhlin2008fast,avron2010blendenpik,boutsidis2013improved}). 
In our algorithm, we use Gaussian matrices in Step 1 as well as 
structured Fourier matrices to achieve accelerated computation.

\section{Preliminaries}
\label{sec:perliminaries}
In this section, we review the rank revealing LU (RRLU) decomposition and bounds on singular values bounds for random matrices 
that will be used to prove the error bounds for the randomized LU algorithm. 
Throughout the paper, we use the following notation: for any matrix $A$, $\sigma_j(A)$ is 
the $j$th largest singular value and $\Vert A \Vert$ is the spectral norm (the largest singular value or $l_2$ operator norm). 
If $x$ is a vector then $\Vert x \Vert$ is the standard $l_2$ (Euclidean) norm. 
$A^{\dagger}$ denotes the pseudo-inverse of $A$. For a random variable $X$, $\mathbb{E}$ denotes the  expectation
of $X$ and $\mathbb{P}(X \ge x)$ is the probably of a random variable $X$ to be larger than a scalar $x$.
\subsection{Rank Revealing LU (RRLU)}
The following theorem is adapted from \cite{pan2000existence} (Theorem 1.2):
\begin{theorem}[\cite{pan2000existence}]
\label{trm:rrlu-pan}
Let $A$ be an $m\times n$ matrix ($m \ge n$). Given an integer $1 \le k < n$, the following factorization 
\begin{equation}
\label{eq:rrlu_def}
PAQ = \begin{pmatrix} L_{11} & 0 \\ L_{21} & I_{n-k} \end{pmatrix}
\begin{pmatrix} U_{11} & U_{12} \\ 0 & U_{22} \end{pmatrix}
\end{equation}
holds where $L_{11}$ is a unit lower triangular, $U_{11}$ is an upper triangular, $P$ and $Q$ are orthogonal permutation matrices. 
Let $\sigma_1 \ge \sigma_2 \ge ... \ge \sigma_n \ge 0$ be the singular values of $A$, then
\begin{equation}
\label{eq:rrlu_def1}
\sigma_k \ge \sigma_{min}(L_{11}U_{11}) \ge \frac{\sigma_k}{k(n-k)+1},
\end{equation}
and 
\begin{equation}
\label{eq:rrlu_def2}
\sigma_{k+1} \le \Vert U_{22} \Vert \le (k(n-k)+1)\sigma_{k+1}.
\end{equation}
\end{theorem}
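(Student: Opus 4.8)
The plan is to realize the factorization \eqref{eq:rrlu_def} through a single block-LU step whose pivots are selected by a \emph{maximum-volume} criterion, and then to read off the four inequalities from the extremal property of that selection. Write $B = PAQ$ and partition it as $\begin{pmatrix} B_{11} & B_{12} \\ B_{21} & B_{22} \end{pmatrix}$ with $B_{11}$ of size $k\times k$ and the other blocks conformable. I would choose $P$ and $Q$ so that $\vol(B_{11}) = \abs{\det B_{11}}$ is maximal among all $k\times k$ submatrices of $A$; such a maximizer exists since there are finitely many row/column choices. Factoring $B_{11} = L_{11}U_{11}$ by ordinary LU (absorbing any internal pivoting into $P,Q$), and then setting $L_{21} = B_{21}U_{11}^{-1}$, $U_{12} = L_{11}^{-1}B_{12}$, and the Schur complement $U_{22} = B_{22} - B_{21}B_{11}^{-1}B_{12}$, reproduces exactly the block structure of \eqref{eq:rrlu_def}, with $L_{11}U_{11} = B_{11}$.

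Two of the four inequalities are the easy directions and do not use the extremal choice. For $\sigma_{\min}(L_{11}U_{11}) = \sigma_{\min}(B_{11}) \le \sigma_k$ I would invoke singular-value interlacing for submatrices: deleting rows and columns cannot raise the $k$th singular value, so $\sigma_k(B_{11}) \le \sigma_k(B) = \sigma_k$. For $\sigma_{k+1} \le \Vert U_{22}\Vert$ I would exhibit an explicit rank-$k$ approximant obtained by zeroing $U_{22}$ in the right factor; a direct block multiplication gives
\[
B - \begin{pmatrix} L_{11} & 0 \\ L_{21} & I \end{pmatrix}\begin{pmatrix} U_{11} & U_{12} \\ 0 & 0 \end{pmatrix} = \begin{pmatrix} 0 & 0 \\ 0 & U_{22} \end{pmatrix},
\]
whose norm is $\Vert U_{22}\Vert$. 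Since the subtracted matrix has rank at most $k$ and $P,Q$ are orthogonal, Eckart--Young yields $\sigma_{k+1}(A) = \sigma_{k+1}(B) \le \Vert U_{22}\Vert$.

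The two remaining inequalities are where the maximum-volume choice does the work, and they are the main obstacle. The key is the extremal lemma: if $B_{11}$ has maximal volume, then exchanging any pivot row or column for an outside one cannot increase $\abs{\det B_{11}}$, which by Cramer's rule forces every entry of $B_{11}^{-1}B_{12}$ and of $B_{21}B_{11}^{-1}$ to be bounded by $1$ in absolute value. I would then represent each entry of the Schur complement as a bordered-determinant ratio,
\[
(U_{22})_{ij} = \frac{\det \begin{pmatrix} B_{11} & c_j \\ r_i & \beta_{ij} \end{pmatrix}}{\det B_{11}},
\]
where $r_i$, $c_j$, and $\beta_{ij}$ come from $B_{21}$, $B_{12}$, and $B_{22}$. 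The numerator is, up to sign, the volume of a $(k+1)\times(k+1)$ submatrix of $A$, so it is at most the maximal such volume; dividing by $\vol(B_{11})$ and relating these extremal volumes to $\prod_{i}\sigma_i(A)$ controls $\Vert U_{22}\Vert$ from above, while the same extremal ratio bounds $\vol(B_{11}) = \prod_{i=1}^{k}\sigma_i(B_{11})$, hence $\sigma_{\min}(B_{11})$, from below.

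Extracting the \emph{exact} factor $k(n-k)+1$, rather than a cruder substitute, is the delicate part. A naive entrywise-to-spectral-norm passage over the $k\times(n-k)$ coupling block gives only a factor of the shape $\sqrt{1+k(n-k)}$, and a one-shot determinant comparison introduces binomial coefficients. I therefore expect to need Pan's sharper, essentially inductive, determinantal accounting, in which the amplification is tracked one row/column exchange at a time across the $k$ pivot columns and the $n-k$ trailing columns; the count of admissible exchanges is what produces the linear term $k(n-k)$, with the $+1$ accounting for the diagonal contribution. Making this bookkeeping tight is the step I expect to be hardest.
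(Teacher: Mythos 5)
First, a framing point: the paper does not prove this statement at all --- Theorem \ref{trm:rrlu-pan} is imported (``adapted'') from Pan \cite{pan2000existence}, so there is no internal proof to compare against; your attempt can only be measured against Pan's original argument. Your outline does track that argument: Pan's existence proof is indeed built on (local) maximum-volume pivoting, and your supporting steps that are actually proved are correct --- the Cramer's-rule consequence that maximal volume forces the entries of $B_{11}^{-1}B_{12}$ and $B_{21}B_{11}^{-1}$ to be at most $1$ in magnitude, the interlacing argument giving $\sigma_{\min}(L_{11}U_{11})=\sigma_k(B_{11})\le\sigma_k(A)$, and the Eckart--Young argument giving $\sigma_{k+1}\le\Vert U_{22}\Vert$ (the subtracted matrix is $\begin{pmatrix}L_{11}\\ L_{21}\end{pmatrix}\begin{pmatrix}U_{11}& U_{12}\end{pmatrix}$, of rank at most $k$).

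The genuine gap is that the two inequalities carrying the factor $k(n-k)+1$ --- the entire rank-revealing content of the theorem --- are never derived. Deferring them to ``Pan's sharper, essentially inductive, determinantal accounting'' is circular: that accounting \emph{is} the theorem you were asked to prove. The bridges you sketch in its place do not hold weight: a lower bound on $\mathrm{vol}(B_{11})=\prod_i\sigma_i(B_{11})$ does not lower-bound $\sigma_{\min}(B_{11})$ unless you also upper-bound $\sigma_1,\dots,\sigma_{k-1}(B_{11})$; a Cauchy--Binet comparison of maximal volumes to $\prod_{i\le k}\sigma_i(A)$ costs a factor $\sqrt{\binom{m}{k}\binom{n}{k}}$; and your bordered-determinant computation yields the entrywise bound $|(U_{22})_{ij}|\le(k+1)\sigma_{k+1}$, which converts to the spectral norm only with an extra $\sqrt{(m-k)(n-k)}$. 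None of these recovers $k(n-k)+1$. Worse, no bookkeeping can: as stated here (arbitrary $m\ge n$, constant independent of $m$), the inequality $\sigma_{\min}(L_{11}U_{11})\ge\sigma_k/(k(n-k)+1)$ is \emph{false}. Take $A$ of size $m\times 2$ whose first column is all ones and second column is zero: then $\sigma_1=\sqrt{m}$ and $k(n-k)+1=2$ for $k=1$, yet $L_{11}U_{11}$ is a single entry of $A$, of modulus at most $1<\sqrt{m}/2$ once $m\ge 5$. (The dimensional mismatch of the block $I_{n-k}$ in Eq. \ref{eq:rrlu_def} when $m\ne n$ points to the same issue.) Pan's bound is essentially a square-matrix statement, or must carry $m$-dependent constants for tall matrices; the paper's adaptation is loose on this point, and only its $U_{22}$ bound (the part that does survive, and the only part used downstream in Lemma \ref{lem:rrlu_approx_err}) is safe. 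So your proposal is incomplete exactly where the difficulty lies, and part of what it sets out to prove cannot be proved in the stated generality.
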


This is called RRLU decomposition. Based on Theorem \ref{trm:rrlu-pan}, we have the following definition: 
\begin{definition}[RRLU Rank $k$ Approximation denoted RRLU$_k$] 
Given a RRLU decomposition (Theorem \ref{trm:rrlu-pan}) of a matrix $A$ with an integer $k$ (as in Eq. \eqref{eq:rrlu_def})
such that $PAQ=LU$. The RRLU rank $k$ approximation is defined by taking $k$ columns 
from $L$ and $k$ rows from $U$ such that
\begin{equation}
\label{eq:rrlu_rank_k_approx}
\text{RRLU}_k(PAQ)=\begin{pmatrix} L_{11} \\ L_{21} \end{pmatrix}
\begin{pmatrix} U_{11} U_{12} \end{pmatrix}
\end{equation}
where $L_{11}, L_{21}, U_{11}, U_{12}, P$ and $Q$ are defined in Theorem \ref{trm:rrlu-pan}.
\end{definition}

\begin{lemma}[RRLU Approximation Error] 
The error of the RRLU$_k$ approximation of $A$ is
\label{lem:rrlu_approx_err}
\begin{equation}
\Vert PAQ-\text{RRLU}_k(PAQ) \Vert \le (k(n-k)+1)\sigma_{k+1}.
\end{equation}
\end{lemma}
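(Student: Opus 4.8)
The plan is to reduce the statement to the bound on $\Vert U_{22} \Vert$ already furnished by Theorem \ref{trm:rrlu-pan}, by means of a direct block-matrix computation. First I would expand the full RRLU factorization of Eq. \eqref{eq:rrlu_def} by multiplying the two block-triangular factors, obtaining
\begin{equation*}
PAQ = \begin{pmatrix} L_{11}U_{11} & L_{11}U_{12} \\ L_{21}U_{11} & L_{21}U_{12} + U_{22} \end{pmatrix}.
\end{equation*}
Next I would expand the rank-$k$ approximation of Eq. \eqref{eq:rrlu_rank_k_approx} in the same block partition,
\begin{equation*}
\text{RRLU}_k(PAQ) = \begin{pmatrix} L_{11} \\ L_{21} \end{pmatrix}\begin{pmatrix} U_{11} & U_{12} \end{pmatrix} = \begin{pmatrix} L_{11}U_{11} & L_{11}U_{12} \\ L_{21}U_{11} & L_{21}U_{12} \end{pmatrix}.
\end{equation*}

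Subtracting the two, every block cancels except the trailing one, leaving
\begin{equation*}
PAQ - \text{RRLU}_k(PAQ) = \begin{pmatrix} 0 & 0 \\ 0 & U_{22} \end{pmatrix}.
\end{equation*}
The key remaining step is to observe that the spectral norm of a matrix whose only nonzero block is $U_{22}$ equals $\Vert U_{22} \Vert$; this holds because bordering a matrix with zero rows and columns alters neither its singular values nor its operator norm. Hence $\Vert PAQ - \text{RRLU}_k(PAQ) \Vert = \Vert U_{22} \Vert$.

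Finally, I would invoke the upper bound $\Vert U_{22} \Vert \le (k(n-k)+1)\sigma_{k+1}$ from Eq. \eqref{eq:rrlu_def2} of Theorem \ref{trm:rrlu-pan}, which immediately yields the claimed inequality. I do not anticipate a genuine obstacle here: the argument is essentially bookkeeping. The only points that warrant care are carrying out the block multiplication correctly, so that the $U_{22}$ term appears solely in the trailing block while all other blocks coincide, and justifying the norm identity for the zero-padded matrix, which is the single nontrivial (if elementary) ingredient.
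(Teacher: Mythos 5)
Your proposal is correct and follows essentially the same route as the paper: the paper's proof likewise subtracts the rank-$k$ approximation from the full RRLU factorization, observes that the difference reduces to the block matrix containing only $U_{22}$ (hence has norm $\Vert U_{22}\Vert$), and concludes via the bound $\Vert U_{22} \Vert \le (k(n-k)+1)\sigma_{k+1}$ from Theorem \ref{trm:rrlu-pan}. The only difference is presentational: you spell out the block multiplication and the zero-padding norm identity explicitly, which the paper compresses into a single displayed equation.
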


\begin{proof}
The proof follows directly from Eqs. \eqref{eq:rrlu_def} and \eqref{eq:rrlu_rank_k_approx}.
\end{proof}

Lemma \ref{lem:bhatia} appears in \cite{bhatia1997matrix}, page 75:
\begin{lemma}[\cite{bhatia1997matrix}]
\label{lem:bhatia}
Let $A$ and $B$ be two matrices and let $\sigma_j(\cdot)$ denotes the $j$th singular value of a matrix. Then,
$ \sigma_j(AB) \le \Vert A \Vert \sigma_j(B) \nonumber$ and $\sigma_j(AB) \le \Vert B \Vert \sigma_j(A) \nonumber$.
\end{lemma}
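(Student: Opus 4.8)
The plan is to derive both inequalities from the Courant--Fischer (Ky Fan) variational characterization of singular values together with the submultiplicativity of the spectral norm. Recall that for a matrix $M$ with $r$ columns and any admissible index $j$,
\[
\sigma_j(M) = \min_{\substack{S \subseteq \mathbb{R}^r \\ \dim S = r-j+1}} \; \max_{\substack{x \in S \\ \Vert x \Vert = 1}} \Vert Mx \Vert.
\]
Since $A$ and $B$ have compatible dimensions for the product $AB$, the matrices $B$ and $AB$ act on the same domain $\mathbb{R}^r$, so the minimization runs over the same family of subspaces $S \subseteq \mathbb{R}^r$ in the characterizations of both $\sigma_j(B)$ and $\sigma_j(AB)$. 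This shared domain is the feature I intend to exploit.

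For the first inequality, I would start from the pointwise bound $\Vert ABx \Vert \le \Vert A \Vert \, \Vert Bx \Vert$, which holds for every vector $x$ by definition of the operator norm. Fixing any subspace $S$ of dimension $r-j+1$ and taking the maximum over unit vectors $x \in S$ gives
\[
\max_{\substack{x \in S \\ \Vert x \Vert = 1}} \Vert ABx \Vert \;\le\; \Vert A \Vert \max_{\substack{x \in S \\ \Vert x \Vert = 1}} \Vert Bx \Vert.
\]
Because this holds for every such $S$, minimizing both sides over $S$ preserves the inequality, and the two resulting minima are exactly $\sigma_j(AB)$ on the left and $\Vert A \Vert \, \sigma_j(B)$ on the right; this yields $\sigma_j(AB) \le \Vert A \Vert \, \sigma_j(B)$.

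For the second inequality I would reduce to the first by transposition. Singular values are invariant under transpose, $\sigma_j(M) = \sigma_j(\transpose{M})$, and the spectral norm satisfies $\Vert \transpose{M} \Vert = \Vert M \Vert$. Applying the already-proved inequality to the product $\transpose{B}\,\transpose{A} = \transpose{(AB)}$ gives $\sigma_j(\transpose{(AB)}) \le \Vert \transpose{B} \Vert \, \sigma_j(\transpose{A})$, which rewrites as $\sigma_j(AB) \le \Vert B \Vert \, \sigma_j(A)$.

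I expect no serious obstacle; the only point requiring care is choosing the correct form of the variational principle. The min--max form (rather than the max--min form) is the right one here, since it places the maximum over a fixed subspace on the inside, which is precisely where the uniform bound $\Vert ABx \Vert \le \Vert A \Vert \Vert Bx \Vert$ can be applied and the scalar $\Vert A \Vert$ factored out. The outer minimization over subspaces then transfers cleanly, using only that $\min_S f(S) \le \min_S g(S)$ whenever $f(S) \le g(S)$ for all $S$.
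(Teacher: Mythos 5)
Your proof is correct as written. Note, though, that the paper contains no proof to compare it with: Lemma \ref{lem:bhatia} is quoted directly from \cite{bhatia1997matrix} (page 75) and is used only as a black box later, in the proofs of Theorems \ref{trm:rand_lu_err}, \ref{trm:error_bound2} and \ref{trm:fastRand_lu_err}. Your argument is the standard variational one, essentially what one finds in Bhatia's own treatment: the min--max form of Courant--Fischer is indeed the right one, the pointwise bound $\Vert ABx \Vert \le \Vert A \Vert \, \Vert Bx \Vert$ slides inside the inner maximum precisely because $B$ and $AB$ act on the same domain, and the second inequality follows by transposition since both $\sigma_j$ and the spectral norm are transpose-invariant. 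A pleasant side effect of your formulation is that indices $j$ exceeding $\operatorname{rank}(B)$ cause no trouble: once $r-j+1 \le \dim \ker B$, the min--max formula returns $0$ on both sides, so the inequality holds for every $j$ up to the common number of columns without any case analysis. For reference, an even shorter route uses the approximation-number characterization $\sigma_j(M)=\min\{\Vert M-N\Vert : \operatorname{rank}(N)<j\}$: if $N$ attains this minimum for $B$, then $AN$ has rank less than $j$, whence $\sigma_j(AB)\le\Vert AB-AN\Vert\le\Vert A\Vert\,\Vert B-N\Vert=\Vert A\Vert\,\sigma_j(B)$. But your proof is complete and self-contained as it stands.
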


Lemma \ref{lem:bigval} was taken from \cite{randecomp} and it is an 
equivalent formulation for Eq. 8.8 in \cite{goldstine1951numerical}.
\begin{lemma}[\cite{randecomp}]
\label{lem:bigval}
Suppose that $G$ is a real $n \times l$ matrix whose entries are i.i.d 
Gaussian random variables with zero mean and unit variance and let $m$ be an integer such that $m\ge l$, $m\ge n$, $\gamma>1$ and
\begin{equation}
\label{eq:bigval}
1-\frac{1}{4(\gamma^2-1)\sqrt{\pi m\gamma^2}}\left( \frac{2\gamma^2}{e^{\gamma^2-1}}\right)^m \ge 0.
\end{equation}
Then, $\Vert G \Vert \le \sqrt{2m}\gamma$ with probability not less than the value in Eq. \eqref{eq:bigval}.
\end{lemma}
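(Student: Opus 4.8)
The plan is to bound the operator norm by a supremum of a Gaussian bilinear form over the unit spheres, control that supremum by a covering (net) argument, and then reduce everything to a single sharp Gaussian tail estimate whose constants are tuned to reproduce Eq. \ref{eq:bigval} exactly.

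First I would write $\Vert G \Vert = \sup_{x \in S^{l-1}}\Vert Gx \Vert = \sup_{x \in S^{l-1},\, y \in S^{n-1}} \langle y, Gx \rangle$, where $S^{d-1}$ denotes the unit sphere in $\mathbb{R}^{d}$. The key elementary observation is that for any fixed pair of unit vectors $x$ and $y$ the scalar $\langle y, Gx \rangle = \sum_{i,j} y_i G_{ij} x_j$ is a single Gaussian random variable with mean zero and variance $\Vert x \Vert^2 \Vert y \Vert^2 = 1$, so its tail is controlled by the standard bound $P(\langle y, Gx\rangle > s) \le \frac{1}{s\sqrt{2\pi}} e^{-s^2/2}$. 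This is the only probabilistic input; everything else is geometry and bookkeeping.

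Next I would discretize. Fixing a radius $\delta \in (0,1)$, I choose $\delta$-nets $\mathcal{N}_x \subset S^{l-1}$ and $\mathcal{N}_y \subset S^{n-1}$ of cardinalities at most $(1+2/\delta)^l$ and $(1+2/\delta)^n$ respectively. A standard net inequality for bilinear forms gives $\Vert G \Vert \le (1-\delta)^{-2}\max_{x \in \mathcal{N}_x,\, y \in \mathcal{N}_y}\langle y, Gx\rangle$, so that a union bound over the at most $(1+2/\delta)^{l+n}$ pairs yields
\[
P\big(\Vert G\Vert > \sqrt{2m}\,\gamma\big) \le (1+2/\delta)^{l+n}\, \frac{1}{s\sqrt{2\pi}}\, e^{-s^2/2}, \qquad s=(1-\delta)^2\sqrt{2m}\,\gamma .
\]
Using $l,n \le m$ to replace $l+n$ by $2m$ in the exponent and then choosing $\delta$ to balance the growing net cardinality against the shrinking Gaussian tail collapses the right-hand side into a clean $m$-th power times a polynomial prefactor. (An essentially equivalent route keeps a single sphere and uses that $\Vert Gx\Vert^2$ is $\chi^2$-distributed, feeding a Chernoff tail of the form $(2\gamma^2 e^{1-\gamma^2})^m$ directly.)

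The main obstacle is purely quantitative: matching the exact constants of Eq. \ref{eq:bigval}, namely the exponential base $2\gamma^2/e^{\gamma^2-1}$ and the prefactor $\frac{1}{4(\gamma^2-1)\sqrt{\pi m\gamma^2}}$. Getting the factor $\sqrt{\pi m \gamma^2}$ and the $e^{\gamma^2-1}$ in the exponent right forces me to keep the sharp form of the Gaussian tail rather than an order-of-magnitude bound, to apply Stirling to the net cardinality / sphere surface area $2\pi^{d/2}/\Gamma(d/2)$ carefully, and to use the hypotheses $m \ge l$, $m \ge n$, $\gamma>1$ precisely where the $1/(\gamma^2-1)$ and the $\sqrt{m}$ arise from summing or integrating the tail. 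This is exactly the computation behind Eq. 8.8 in \cite{goldstine1951numerical}; the non-negativity assumption on Eq. \ref{eq:bigval} is what makes the resulting expression a legitimate probability, and the desired bound $\Vert G\Vert \le \sqrt{2m}\,\gamma$ then holds on the complementary event.
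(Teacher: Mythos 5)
There is a genuine gap, and it sits exactly where you placed your ``main obstacle.'' First, note that the paper never proves Lemma \ref{lem:bigval}: it is imported from \cite{randecomp}, where it is a reformulation of Eq.~8.8 of \cite{goldstine1951numerical}, and that formula is obtained from the exact joint eigenvalue density of the Wishart matrix $GG^T$ (Laguerre ensemble); the factor $1/(\gamma^2-1)$ comes from integrating the tail of that density and $\sqrt{\pi m \gamma^2}$ from Stirling. Your net-plus-union-bound route cannot reproduce these constants, and this is not a matter of bookkeeping care --- it is a structural loss. Your failure probability is
\begin{equation*}
(1+2/\delta)^{l+n}\,\frac{1}{s\sqrt{2\pi}}\,e^{-s^2/2},\qquad s=(1-\delta)^2\sqrt{2m}\,\gamma,
\end{equation*}
so (taking $l=n=m$, the case the lemma must cover) the per-dimension base is at best $\left(1+2/\delta\right)^2 e^{-(1-\delta)^4\gamma^2}$, to be compared with the target base $2\gamma^2 e^{1-\gamma^2}$. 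To keep the exponent loss $e^{(1-(1-\delta)^4)\gamma^2}$ bounded you are forced to take $\delta\lesssim 1/\gamma^2$, but then the covering number contributes $(1+2/\delta)^2\gtrsim \gamma^4$; optimizing $\delta$ (the minimum occurs near $\delta\approx 1/(2\gamma^2)$) leaves a base larger than the target by a factor of order $\gamma^2$, hence an overall bound weaker by a factor of order $(\gamma^2)^m$. No choice of $\delta$ removes this trade-off between net cardinality and threshold deflation; this is precisely why sharp Gaussian operator-norm tails are not proved by nets.

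Your two escape hatches do not close the gap either. Deferring the constant-matching to ``the computation behind Eq.~8.8 in \cite{goldstine1951numerical}'' is circular --- Eq.~8.8 \emph{is} (an equivalent form of) the statement being proved --- and it is also methodologically mismatched, since that computation is a Wishart eigenvalue-density estimate, not a refinement of your covering argument, so you cannot graft its constants onto your route. The parenthetical $\chi^2$ remark conflates a pointwise bound with a uniform one: for a single fixed $x$, Chernoff indeed gives $\mathbb{P}\left(\Vert Gx\Vert^2 > 2m\gamma^2\right)\le\left(\sqrt{2}\,\gamma\, e^{1/2}e^{-\gamma^2}\right)^m$, which is even smaller than the target, but $\Vert G\Vert$ is a supremum over $x\in S^{l-1}$, and restoring that supremum requires a net over the sphere and reintroduces exactly the loss above. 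A self-contained proof matching the lemma would instead (i) reduce to the square case by observing that $G$ is a submatrix of an $m\times m$ Gaussian matrix $\tilde G$, so $\Vert G\Vert\le\Vert\tilde G\Vert$ (this is where the hypotheses $m\ge n$, $m\ge l$ enter), and then (ii) bound $\mathbb{P}\left(\lambda_{\max}(\tilde G\tilde G^T)>2m\gamma^2\right)$ via the Wishart eigenvalue density, e.g.\ by the expected number of eigenvalues exceeding the threshold, which is the Goldstine--von Neumann argument.
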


\subsection{Subgaussian Random Matrices}
\label{sec:sparse}

\begin{definition}
\label{def:bsubgauss}
A real valued random variable $X$ is called subgaussian if there exists $b>0$ such that for all $t>0$  we have
$\mathbb{E}e^{tX} \le e^{b^2t^2/2}$.
\end{definition}


We review several results adapted from \cite{litvak2005smallest,rudelson2009smallest} about random matrices whose entries are subgaussian.
We focus on the case where $A$ is a tall $m \times n$ matrix ($m>(1+\frac{1}{\ln n})n$). 
Similar results can be found in \cite{litvak2010} for square and almost square matrices. 

\begin{definition}
\label{def:mat_family_A}
Assume that $\mu \ge 1$, $a_1>0$ and $a_2>0$. $\mathcal{A}(\mu,a_1,a_2,m,n)$ is the set of all $m \times n$ ($m>n$) 
random matrices $A=(\xi_{ij})$ whose entries are centered i.i.d real valued random variables satisfying the following conditions:

\begin{enumerate}
\item
Moments: $\mathbb{E} \vert \xi_{ij} \vert ^3 \le \mu^3$;
\item
Norm: $\mathbb{P}(\Vert A \Vert > a_1\sqrt{m}) \le e^{-a_2 m}$ where $\mathbb{P}$ is a probability function;
\item
Variance: $\mathbb{E} \xi_{ij}^2 \ge 1$.
\end{enumerate}
\end{definition}
It is shown in \cite{litvak2005smallest} that if $A$ is subgaussian then $A \in \mathcal{A}$. 
For a Gaussian matrix with zero mean and unit variance,~$\mu=\left(\frac{4}{\sqrt{2\pi}}\right)^\frac{1}{3}$

Theorems \ref{trm:subgauss} and \ref{trm:smallest_sv} are taken from Section 2 in \cite{litvak2005smallest}.
\begin{theorem}[\cite{litvak2005smallest}]
\label{trm:subgauss}
Every matrix $A$ of size $m\times n$ ($m \ge n$), whose entries are subgaussian with $\mu \ge 1$ and $a_2\ge 0$, satisfies:
\begin{equation}
\mathbb{P}\left( \Vert A \Vert \ge a_1\sqrt{m}\right) \le e^{-a_2 m}
\end{equation}
where $a_1=6\mu\sqrt{a_2+4}$.
\end{theorem}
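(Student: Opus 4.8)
The plan is to bound $\Vert A \Vert$ by discretizing its variational description as a bilinear form. Writing $\Vert A \Vert = \sup_{x \in S^{n-1},\, y \in S^{m-1}} \langle Ax, y \rangle$, I would fix a parameter $\varepsilon \in (0, 1/2)$ and pick finite $\varepsilon$-nets $\mathcal{N} \subset S^{n-1}$ and $\mathcal{M} \subset S^{m-1}$; a standard volumetric estimate gives $\vert \mathcal{N} \vert \le (1 + 2/\varepsilon)^n$ and $\vert \mathcal{M} \vert \le (1 + 2/\varepsilon)^m$. The classical net-to-sphere comparison then yields
\begin{equation}
\Vert A \Vert \le \frac{1}{1 - 2\varepsilon} \max_{x \in \mathcal{N}, \, y \in \mathcal{M}} \langle Ax, y \rangle ,
\end{equation}
so it suffices to control the maximum of $\langle Ax, y \rangle$ over a finite index set of cardinality at most $(1 + 2/\varepsilon)^{m+n}$.

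The second step is a tail bound for a single frozen pair $(x,y)$. Here $\langle Ax, y \rangle = \sum_{i,j} y_i \xi_{ij} x_j$ is a linear combination of the independent, centered, subgaussian entries $\xi_{ij}$ with fixed coefficients $y_i x_j$ satisfying $\sum_{i,j} (y_i x_j)^2 = \Vert x \Vert^2 \Vert y \Vert^2 = 1$. By independence and Definition \ref{def:bsubgauss} applied entrywise, $\mathbb{E} e^{t \langle Ax, y \rangle} = \prod_{i,j} \mathbb{E} e^{t y_i x_j \xi_{ij}} \le e^{b^2 t^2 / 2}$, where $b$ is the subgaussian parameter of the entries. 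The Chernoff/Markov bound (optimizing over $t>0$) then gives, for every $s > 0$,
\begin{equation}
\mathbb{P}\left( \langle Ax, y \rangle \ge s \right) \le e^{-s^2 / (2 b^2)} .
\end{equation}

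Finally I would glue the two steps with a union bound. Setting $s = (1 - 2\varepsilon) a_1 \sqrt{m}$ and using $m \ge n$ so that $(1 + 2/\varepsilon)^{m+n} \le (1 + 2/\varepsilon)^{2m}$, the net comparison and the pointwise bound combine to
\begin{equation}
\mathbb{P}\left( \Vert A \Vert \ge a_1 \sqrt{m} \right) \le \exp\left( 2m \ln\left( 1 + \tfrac{2}{\varepsilon} \right) - \frac{(1 - 2\varepsilon)^2 a_1^2}{2 b^2}\, m \right) .
\end{equation}
This is at most $e^{-a_2 m}$ as soon as $(1 - 2\varepsilon)^2 a_1^2 / (2b^2) \ge a_2 + 2\ln(1 + 2/\varepsilon)$, i.e. $a_1 \gtrsim \frac{b \sqrt{2}}{1 - 2\varepsilon} \sqrt{a_2 + 2\ln(1 + 2/\varepsilon)}$. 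I expect the main obstacle to be purely quantitative rather than conceptual: one must choose $\varepsilon$ (near $1/3$) so that $2\ln(1 + 2/\varepsilon) \le 4$ while keeping the prefactor $\sqrt{2}/(1 - 2\varepsilon)$ small, and must bound the subgaussian parameter $b$ in terms of the third-moment parameter $\mu$, so that the admissible range for $a_1$ collapses to the stated clean value $a_1 = 6\mu\sqrt{a_2 + 4}$. The architecture — net, subgaussian tail, union bound — is routine; pinning down the precise constants $6$ and $4$ is where the care lies.
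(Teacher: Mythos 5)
The paper itself offers no proof of Theorem \ref{trm:subgauss}: it is imported verbatim (up to notation) from Section 2 of \cite{litvak2005smallest}, so there is no internal argument to compare yours against. Your proposal is, in essence, the standard proof of this fact and the one used in the cited source: $\varepsilon$-nets on both spheres with the volumetric cardinality bound, a sub-gaussian MGF/Chernoff tail for a single frozen pair $(x,y)$ using $\sum_{i,j}(y_i x_j)^2 = 1$, and a union bound. Your constant chase also genuinely closes: with $\varepsilon = 1/3$ the net comparison factor is $3$, the entropy cost is $2\ln 7 \approx 3.89 \le 4$, and the requirement becomes $a_1 \ge 3\sqrt{2}\,b\sqrt{a_2 + 2\ln 7}$, which is implied by $a_1 = 6 b \sqrt{a_2+4}$ for every $a_2 \ge 0$. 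Two small technical remarks: first, Definition \ref{def:bsubgauss} as printed demands the MGF bound only for $t>0$, whereas your entrywise application uses coefficients $y_i x_j$ of both signs, so you need the two-sided bound $\mathbb{E}e^{\theta \xi} \le e^{b^2\theta^2/2}$ for all real $\theta$; this is the standard definition for centered variables and is what the source assumes, but it is worth stating.

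The one place your plan goes astray is the final sentence, where you propose to ``bound the subgaussian parameter $b$ in terms of the third-moment parameter $\mu$.'' That step is both unnecessary and impossible. In Theorem \ref{trm:subgauss}, $\mu$ must be read as the sub-gaussian parameter itself (the $b$ of Definition \ref{def:bsubgauss}); the symbol is overloaded, and it is \emph{not} the third-moment bound $\mathbb{E}|\xi_{ij}|^3 \le \mu^3$ of Definition \ref{def:mat_family_A}. No inequality of the form $b \le C\mu_{\mathrm{3rd}}$ can hold: take symmetric entries equal to $\pm M$ with probability $M^{-3}/2$ each and $\pm 1$ otherwise; the third moment stays below $2$ while the MGF parameter $b$ grows like $\sqrt{M}$, and indeed the norm bound $\mathbb{P}(\Vert A \Vert \ge 6\mu_{\mathrm{3rd}}\sqrt{(a_2+4)m}) \le e^{-a_2 m}$ fails for suitable $M$, so the theorem would be false under that reading. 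Once you set $b = \mu$, your argument as written already proves the statement; the connection to the third-moment condition runs in the opposite direction (sub-gaussianity with parameter $\mu$ implies membership in $\mathcal{A}$, as the paper notes after Definition \ref{def:mat_family_A}), and is not needed here.
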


Theorem \ref{trm:subgauss} provides an upper bound for the largest singular value that depends on 
the desired probability. 
Theorem \ref{trm:smallest_sv} is used to 
bound from below the smallest singular value  of random Gaussian matrices.
\begin{theorem}[\cite{litvak2005smallest}]
\label{trm:smallest_sv}
Let $\mu \ge 1$, $a_1,a_2>0$. Let $A$ be an $m \times n$ matrix where $m>(1+\frac{1}{\ln n})n$. $m$ can be written as $m=(1+\delta)n$. 
Suppose that the entries of $A$ are centered independent random variables such that conditions $1,2,3$ in Definition \ref{def:mat_family_A} hold. 
Then, there exist positive constants $c_1$ and $ c_2$ such that
\begin{equation}
\label{smallsveq}
\mathbb{P}(\sigma_n(A) \le c_1\sqrt{m}) \le e^{-m}+e^{-c''m/(2\mu^6)}+e^{-a_2 m} \le e^{-c_2 m}.
\end{equation}
\end{theorem}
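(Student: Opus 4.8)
The plan is to bound the smallest singular value through its variational characterization $\sigma_n(A) = \min_{x \in S^{n-1}} \Vert Ax \Vert$, and to control this minimum by a three-part strategy: a single-vector anti-concentration (small-ball) estimate, a tensorization step that amplifies it across the $m$ independent rows, and an $\epsilon$-net discretization of the sphere whose discretization error is tamed using the operator-norm bound from condition 2 of Definition \ref{def:mat_family_A}. Each of the three terms in \eqref{smallsveq} is then produced by one of these ingredients: the $e^{-a_2 m}$ term isolates the event $\mathcal{E} = \{\Vert A \Vert \le a_1\sqrt{m}\}$ whose complement has probability at most $e^{-a_2 m}$ (condition 2), the $e^{-c''m/(2\mu^6)}$ term comes from the tensorized small-ball estimate, and the $e^{-m}$ term absorbs the cardinality of the net in the union bound.

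First I would fix a unit vector $x \in S^{n-1}$ and analyze $\Vert Ax \Vert^2 = \sum_{i=1}^m Z_i^2$, where $Z_i = \langle A_i, x\rangle$ and $A_i$ denotes the $i$th row. Since the entries are centered, independent, and satisfy $\mathbb{E}\xi_{ij}^2 \ge 1$ (condition 3), each $Z_i$ has mean zero and $\mathbb{E}Z_i^2 = \sum_j x_j^2\, \mathbb{E}\xi_{ij}^2 \ge 1$, while the third-moment bound (condition 1) gives $\mathbb{E}\absinline{Z_i}^3 \le C\mu^3$ for an absolute constant $C$. These two facts yield a L\'evy-type anti-concentration bound $\mathbb{P}(\absinline{Z_i} \le \lambda) \le 1 - \theta$ for a fixed $\lambda > 0$ and a spread constant $\theta = \theta(\mu) > 0$; the $\mu$-dependence enters through the Paley--Zygmund ratio $(\mathbb{E}Z_i^2)^3 / (\mathbb{E}\absinline{Z_i}^3)^2 \ge 1/(C^2\mu^6)$, which is exactly where the power $\mu^6$ originates.

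Next I would tensorize. Writing the Laplace transform $\mathbb{E}e^{-s Z^2} \le 1 - c(\mu)$ for a suitable $s > 0$ (a direct consequence of the single-variable spread estimate), independence of the rows gives for every fixed $x$
\[
\mathbb{P}\left( \Vert Ax \Vert \le t\sqrt{m} \right) \le e^{s t^2 m}\left( \mathbb{E}e^{-s Z^2}\right)^m \le \left( e^{s t^2}\,(1 - c(\mu))\right)^m ,
\]
and choosing $t$ small makes the base strictly less than $1$, so $\mathbb{P}(\Vert Ax \Vert \le c_1\sqrt{m}) \le e^{-c'' m/(2\mu^6)}$ after tracking the $\mu$-dependence of $c(\mu)$. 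I would then build an $\epsilon$-net $\mathcal{N} \subset S^{n-1}$ of cardinality $\absinline{\mathcal{N}} \le (3/\epsilon)^n$, apply a union bound over $\mathcal{N}$, and transfer the estimate to all of $S^{n-1}$: on $\mathcal{E}$, for any $x$ there is $y \in \mathcal{N}$ with $\Vert A x \Vert \ge \Vert A y \Vert - \Vert A \Vert\,\epsilon \ge \Vert A y \Vert - a_1\sqrt{m}\,\epsilon$, so a lower bound on $\min_{y \in \mathcal{N}} \Vert A y \Vert$ upgrades to one on $\sigma_n(A)$ after subtracting the discretization loss.

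The hard part is the final balancing of entropy against deviation. The union bound produces a factor $\absinline{\mathcal{N}}\, e^{-c'' m/(2\mu^6)} \le \exp\big( n\ln(3/\epsilon) - c''m/(2\mu^6)\big)$, and for this to collapse to $e^{-m}$ the exponential gain $c''m/(2\mu^6)$ must dominate the net entropy $n\ln(3/\epsilon)$ with room to spare. Because $m$ and $n$ are only barely separated in the borderline regime $m = (1+\delta)n$ with $\delta$ as small as $1/\ln n$, the margin $c''m/(2\mu^6) - n\ln(3/\epsilon)$ is delicate, and the radius $\epsilon$ (hence the admissible threshold $c_1$ and the discretization loss $a_1\sqrt{m}\,\epsilon$) must be tuned against $\mu$ and $\delta$ so that all exponents stay bounded below by a single $c_2 m$. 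This is precisely where the tall condition $m > (1+\frac{1}{\ln n})n$ is used, and it is the main technical obstacle; once the constants are chosen so that $e^{-m}$, $e^{-c''m/(2\mu^6)}$ and $e^{-a_2 m}$ are each at most $e^{-c_2 m}$, the chain of inequalities in \eqref{smallsveq} follows.
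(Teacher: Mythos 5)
This theorem is not proved in the paper at all: it is imported verbatim from \cite{litvak2005smallest}, and the paper's only addition is the record of the explicit constants in Eqs.~\ref{c1_const} and \ref{c2_const}. Your attempt must therefore be measured against the original argument of Litvak, Pajor, Rudelson and Tomczak-Jaegermann. Your skeleton (restrict to the event $\Vert A \Vert \le a_1\sqrt{m}$ from condition 2 of Definition~\ref{def:mat_family_A}, prove a per-vector small-ball estimate, tensorize over the $m$ independent rows, discretize the sphere) is the right family of arguments, and your Paley--Zygmund computation is sound: H\"older gives $\mathbb{E}Z^2 \le \theta^2 + (\mathbb{E}|Z|^3)^{2/3}\,\mathbb{P}(|Z|>\theta)^{1/3}$, hence $\mathbb{P}(|Z|>\theta) \ge (\mathbb{E}Z^2-\theta^2)^3/(\mathbb{E}|Z|^3)^2 \gtrsim \mu^{-6}$, which is indeed where the power $\mu^6$ enters in the original proof.

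The gap is in the last step, and it is structural, not a matter of the ``delicate balancing'' you defer to: the union bound you propose can never close in the regime the theorem covers. Your tensorized Paley--Zygmund bound yields a per-vector exponent of at most $c''m/(2\mu^6)$ with $c''=27/2^{11}$ and $\mu\ge 1$, i.e.\ at most roughly $0.007\,m$. Meanwhile the discretization step forces $\epsilon \lesssim t/a_1$, where $t \lesssim \mu^{-3}$ is the small-ball threshold and $a_1=6\mu\sqrt{a_2+4}\ge 12$, so $\epsilon \lesssim 1/12$ and the net entropy is at least $n\ln(3/\epsilon) \ge n\ln 36 > 3.5\,n \ge 1.7\,m$ for $\delta\le 1$. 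The deficit is of order $m$ for every admissible choice of $\epsilon$, $\mu$, $\delta$; with the paper's constants your scheme would only prove the statement for extremely tall matrices, roughly $m \gtrsim 500\,n$, whereas the entire point of the cited theorem is the barely-tall regime $m=(1+\delta)n$ with $\delta$ as small as $1/\ln n$. The original proof overcomes this with two ingredients absent from your sketch: (i) the sphere is partitioned into ``spread'' vectors (small $\ell_3$ or $\ell_\infty$ norm) and ``compressible'' ones; (ii) for spread vectors one does not use Paley--Zygmund but an Esseen/Berry--Esseen small-ball bound of the form $\mathbb{P}(|\langle A_i,x\rangle|\le\lambda)\le C\lambda + C\mu^3\Vert x\Vert_3^3$, whose strong tensorization gives $\mathbb{P}(\Vert Ax\Vert\le\lambda\sqrt{m})\le (C\lambda)^{cm}$ --- an exponent that grows without bound as $\lambda\downarrow 0$, so it beats the full-sphere entropy once $\lambda$ is taken of size $(\mathrm{const})^{1/\delta}$. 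That choice is precisely the fingerprint visible in the paper's constant $c_1$ in Eq.~\ref{c1_const}, which carries the factor $\left(\frac{b}{3e^2c_3a_1}\right)^{1/\delta}$. The Paley--Zygmund/$\mu^6$ estimate is used only on the compressible part, whose entropy is genuinely small, and that is what produces the separate $e^{-c''m/(2\mu^6)}$ term in Eq.~\ref{smallsveq}; your accounting, in which that same estimate is supposed to absorb the entropy of a net of the whole sphere into the $e^{-m}$ term, is exactly what cannot work.
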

From Theorem \ref{trm:smallest_sv}, the exact values of constants $c_1, c_2$ and $c''$ are
\begin{equation}
\label{c1_const}
c_1=\frac{b}{e^2 c_3}\left( \frac{b}{3e^2 c_3 a_1} \right)^\frac{1}{\delta},~~c''=\frac{27}{2^{11}}
\end{equation}
where $c_3=4\sqrt{\frac{2}{\pi}}\left(\frac{2\mu^9}{a_1^3}+\sqrt{\pi}\right)$, 
$b=\min \left( \frac{1}{4}, \frac{c'}{5a_1\mu^3}\right)$ 
and $c'=\left( \frac{27}{2^{13}}\right)^\frac{1}{2}$.
For the constant $c_2$, we need a small enough constant to satisfy 
the inequality in Eq. \eqref{smallsveq} and set it, for simplification, to
\begin{equation}
\label{c2_const}
c_2=\min\left(1, \frac{c''}{(2\mu^6)}, a_2\right)-\frac{\ln3}{m}.
\end{equation}
The setting of $c_2$ according to Eq. \eqref{c2_const} comes from a relaxation of the inequality
\begin{equation*}
	e^{-m}+e^{-c''m/(2\mu^6)}+e^{-a_2 m} \le 3e^{-\min\left(1, \frac{c''}{(2\mu^6)}, a_2\right)} \le e^{-c_2 m}
\end{equation*}
and solving $3e^{-\min\left(1, \frac{c''}{(2\mu^6)}, a_2\right)} \le e^{-c_2 m}$ for $c_2$.

\subsection{The SRFT matrix} \label{subsec:srht}
The Subsampled Random Fourier Transform (SRFT), which is described in
\cite{AC,WLRT}, is a random matrix $R$ with the structure $R = DFS$
where $D$ is an $n \times n$ diagonal matrix whose entries are
i.i.d. random variables drawn from a uniform distribution on the
unit circle in $\mathbb{C}$, $F$ is an $n\times n$ discrete Fourier
transform such that $F_{jk}= \frac{1}{\sqrt{n}}e^{-2\pi i
	(j-1)(k-1)/n}$ and $S$ is an $n\times l$ matrix whose entries are all
zeros except for a single randomly placed 1 in each column.

\begin{lemma}[\cite{WLRT}] \label{lem:SRFTmult}
	For any $m \times n$ matrix $A$, let $R$ be the $n \times l$  SRFT matrix. Then, $Y=AR$ can be computed in $\mathcal{O}(mn\log l)$  floating point operations.
\end{lemma}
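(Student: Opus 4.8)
The plan is to exploit the factored structure $Y=AR=A(DFS)=((AD)F)S$ and to bound the cost of the three stages separately. First I would multiply $A$ (of size $m\times n$) by the diagonal matrix $D$; this merely rescales the $n$ columns of $A$, so $AD$ costs $\mathcal{O}(mn)$ operations. At the other end, applying $S$ on the right simply extracts the $l$ columns of $(AD)F$ indexed by the fixed, randomly chosen positions of the ones in $S$, costing $\mathcal{O}(ml)$. Hence everything reduces to the middle stage: for each of the $m$ rows of $AD$, compute the $l$ entries of its length-$n$ discrete Fourier transform selected by $S$. Since $S$ selects the same $l$ output indices for every row, it suffices to show that $l$ prescribed DFT coefficients of a single length-$n$ vector can be computed in $\mathcal{O}(n\log l)$ operations; multiplying by $m$ then yields the claimed $\mathcal{O}(mn\log l)$.

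The core step is therefore a subsampled (pruned) FFT, and here is the construction I would use. Suppose we must evaluate $\widehat{x}_k=\sum_{j=0}^{n-1} x_j\,\omega_n^{jk}$ (with $\omega_n=e^{-2\pi i/n}$) at a fixed set $K$ of $l$ frequencies. Choose a power of two $q$ with $l\le q<2l$ that divides $n$ (e.g.\ take $n$ to be a power of two, as is standard), and set $p=n/q$. Splitting the time index as $j=pa+c$ with $a\in\{0,\dots,q-1\}$ and $c\in\{0,\dots,p-1\}$, and using $\omega_n^{pak}=\omega_q^{ak}=\omega_q^{a(k\bmod q)}$, gives
\begin{equation}
\widehat{x}_k=\sum_{c=0}^{p-1}\omega_n^{ck}\,g_c(k\bmod q),\qquad
g_c(s)=\sum_{a=0}^{q-1}x_{pa+c}\,\omega_q^{as}.
\end{equation}
Each $g_c$ is a full length-$q$ DFT of a decimated subsequence, computable by FFT in $\mathcal{O}(q\log q)$; over the $p$ values of $c$ this costs $\mathcal{O}(pq\log q)=\mathcal{O}(n\log l)$. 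Once the tables $g_c$ are available, every requested coefficient $\widehat{x}_k$ is obtained from a single twiddle-weighted sum of length $p$, at cost $\mathcal{O}(p)$ per output and $\mathcal{O}(lp)=\mathcal{O}(n)$ in total, the last equality because $lp\le qp=n$. The per-vector cost is thus $\mathcal{O}(n\log l)+\mathcal{O}(n)=\mathcal{O}(n\log l)$.

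Combining the three stages gives $\mathcal{O}(mn)+\mathcal{O}(mn\log l)+\mathcal{O}(ml)=\mathcal{O}(mn\log l)$, as required. The main obstacle is the middle stage: the elementary column scaling and column selection are immediate, but one must ensure that arbitrarily (randomly) placed output indices do not spoil the $\log l$ bound. The decomposition above resolves this precisely because the inner length-$q$ transforms, which carry the only logarithmic factor, are oblivious to $K$, while the outer reconstruction touches the chosen frequencies sparsely and stays linear thanks to the balance $lp\le n$. The only mild technical point is the divisibility requirement $q\mid n$; when $n$ is not a convenient power of two one pads or picks a composite factorization of $n$ with a factor of size $\Theta(l)$, and the detailed construction tailored to the SRFT is the one given in~\cite{WLRT}.
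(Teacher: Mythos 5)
The paper does not actually prove this lemma: it is quoted from \cite{WLRT}, and the $\mathcal{O}(mn\log l)$ operation count is taken on faith from that reference. Your argument therefore cannot be compared against a proof in the paper itself, but it correctly reconstructs the standard one underlying the citation: factor $R=DFS$, absorb the diagonal $D$ into a column scaling of cost $\mathcal{O}(mn)$, observe that $S$ fixes the \emph{same} $l$ output frequencies for every row, and reduce to a pruned (output-subsampled) FFT that computes $l$ prescribed DFT coefficients of a length-$n$ vector in $\mathcal{O}(n\log l)$ operations. Your decimation identity $\widehat{x}_k=\sum_{c=0}^{p-1}\omega_n^{ck}\,g_c(k\bmod q)$ is correct, and the cost accounting ($\mathcal{O}(pq\log q)=\mathcal{O}(n\log l)$ for the inner FFTs plus $\mathcal{O}(lp)\le \mathcal{O}(n)$ for the reconstruction, using $l\le q$) is right; this is essentially the Sorensen--Burrus subsampled FFT that \cite{WLRT} invokes. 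One caveat: your fallback for the case $q\nmid n$ via ``padding'' is not sound as stated --- zero-padding a vector to length $n'>n$ produces DFT samples on the frequency grid $2\pi k/n'$, not at the original frequencies $2\pi k/n$ that the SRFT matrix $F$ is defined with, so the padded transform computes the wrong coefficients. The construction genuinely needs $n$ to have a factor of size $\Theta(l)$ (e.g., $n$ a power of two), which is also the implicit assumption behind the cited result; for such $n$ your proof is complete, and flagging the divisibility condition as a hypothesis rather than dismissing it with padding would make the argument airtight.
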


\subsection{Interpolative decomposition (ID)}
\label{subsec:id}
Let $ A$ be an $ m \times n $ of rank $r$.  $ A \approx A_{(:,J)} X$ is
the ID of rank $r$ of $A$ if:
\begin{enumerate}
	\item $ J $ is a subset of $ r $ indices from $1,\ldots, n$.
	\item The $ r \times n $ matrix $ A_{(:,J)} $ is a subset of $J$ columns from $A$.
	\item $ X $ is an $ r \times n $ matrix whose entries are less than 2 in magnitude and contains $r$ columns of the identity matrix.
\end{enumerate}
Similarly, it is possible to compute the ID with row selection such that $A \approx XA_{(J,:)}$. The ID is based on \cite{gu1996efficient} and it is introduced 
in \cite{randecomp,low_rank_comp,halko2011finding} for deterministic and random
algorithms. It is possible to compute ID with LU instead of using QR. This can increase the reconstruction error, since RRQR has better bounds than RRLU (\cite{gu1996efficient,pan2000existence}) while reducing the computational complexity since LU is faster to compute than QR (\cite{golub2012matrix}). 

\section{Randomized LU}
\label{sec:random_lu}
In this section, we present the randomized LU algorithm (Algorithm \ref{alg:randomized_lu}) that computes the LU rank $k$ 
approximation of a full matrix. 
In addition, we present Algorithm \ref{alg:fast_randomized_lu} that utilizes the SRFT matrix for achieving a faster processing.
Error bounds are derived for each algorithm. 

The algorithm begins by projecting the input matrix on a random matrix. 
The resulting matrix captures most of the range of the input matrix.
Then, we compute a triangular basis for this matrix and project the input matrix on it. 
Finally, we find a second triangular basis for the projected columns and multiply it with the original basis.
The product leads to a lower triangular matrix 
$L$ and the upper triangular matrix $U$ is obtained from the second LU factorization.

\begin{algorithm}[H]
\caption{Randomized LU Decomposition}
\label{alg:randomized_lu}
\textbf{Input:} $A$ matrix of size $m \times n$ to decompose, $k$  desired rank, $l \ge k$ number of columns to use.\\
\textbf{Output:} Matrices $P,Q,L,U$ such that $\Vert PAQ-LU \Vert \le \mathcal{O}(\sigma_{k+1}(A))$ where $P$ and $Q$ 
are orthogonal permutation matrices, $L$ and $U$ are the lower and upper triangular matrices, respectively.
\begin{algorithmic}[1]
\STATE Create a matrix $G$ of size $n \times l$ whose entries are i.i.d. Gaussian random variables 
with zero mean and unit standard deviation.
\STATE $Y \gets AG$.
\STATE Apply RRLU decomposition (Theorem \ref{trm:rrlu-pan}) to $Y$ such that $PYQ_y=L_yU_y$.
\STATE Truncate $L_y$ and $U_y$ by choosing the first $k$ columns and the first $k$ rows, respectively, such that
$L_y \leftarrow L_y(:,1:k)$ and $U_y \leftarrow U_y(1:k,:)$.
\STATE $B \gets L_y^{\dagger}PA$.
\STATE Apply LU decomposition to $B$ with column pivoting $BQ=L_bU_b$.
\STATE $L \gets L_y L_b$.
\STATE $U \gets U_b$.
\end{algorithmic}
\end{algorithm}

\begin{rem}
The pseudo-inverse of $L_y$ in step $5$ can be 
computed by $L_y^{\dagger}=(L_y^T L_y)^{-1}L_y^T$. 
This can be done efficiently when it is computed on platforms such as GPUs that can multiply matrices  via parallelization.
Usually, the inversion is done on a small matrix since in many cases $k \ll n$ and therefore it can be done cheaply (computationally wise)
by the application of Gaussian elimination.
\end{rem}

\begin{rem} In practice, it is sufficient to perform step $3$ in Algorithm \ref{alg:randomized_lu} 
using standard LU decomposition with partial pivoting instead of applying RRLU. The cases where $U$ grows exponentially
are extremely rare -- see section 3.4.5 in \cite{golub2012matrix,trefethen1990average}.
\end{rem}

Theorem \ref{trm:rand_lu_err} presents an error bound for Algorithm \ref{alg:randomized_lu}:
\begin{theorem} 
\label{trm:rand_lu_err}
Let $A$ be a matrix of size $m\times n$. Then, its randomized LU decomposition 
produced by Algorithm \ref{alg:randomized_lu} with integers $k$ and $l$  ($l\ge k$)  satisfies:

\begin{equation}
\Vert LU-PAQ \Vert \le \left(2\sqrt{2nl\beta^2\gamma^2+1}+2\sqrt{2nl}\beta\gamma \left( k(n-k)+1 \right)\right)\sigma_{k+1}(A) ,
\end{equation}
with probability not less than
\begin{equation}
\label{eq:rand_lu_err_prob}
\xi \triangleq 1-\frac{1}{\sqrt{2\pi(l-k+1)}}\left(\frac{e}{(l-k+1)\beta}\right)^{l-k+1}-
\frac{1}{4(\gamma^2-1)\sqrt{\pi n \gamma^2}}\left(\frac{2\gamma^2}{e^{\gamma^2-1}}\right)^n ,
\end{equation}
where $\beta>0$ and $\gamma>1$.
\end{theorem}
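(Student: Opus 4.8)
The plan is to reduce the error to a single orthogonal projection and then to control that projection with the random matrix bounds. First I would unwind the algorithm algebraically. From steps $5$--$8$ we have $B=L_y^{\dagger}PA$ and $BQ=L_bU_b$, so $LU=L_yL_bU_b=L_y(BQ)=L_yL_y^{\dagger}PAQ$. Writing $\Pi\newdef L_yL_y^{\dagger}=L_y(L_y^TL_y)^{-1}L_y^T$, which is the orthogonal projection onto the column space of the truncated $L_y$, this gives $LU-PAQ=-(I-\Pi)PA\,Q$. Since $Q$ is orthogonal, $\Vert LU-PAQ\Vert=\Vert(I-\Pi)PA\Vert$. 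Everything thus reduces to showing that the $k$-dimensional range of $L_y$ captures $PA$ up to $\mathcal{O}(\sigma_{k+1})$.

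Because $\Pi$ is an orthogonal projection, $\Vert(I-\Pi)PA\Vert\le\Vert PA-X\Vert$ for any matrix $X$ whose column space lies in $\mathrm{range}(L_y)$, so it suffices to build one good such $X$ from the RRLU output and then ``undo'' the random projection. Applying Lemma \ref{lem:rrlu_approx_err} to $Y=AG$ (an $m\times l$ matrix) yields a $k\times l$ matrix $M$ with $\mathrm{range}(L_yM)\subseteq\mathrm{range}(L_y)$ and $\Vert PAG-L_yM\Vert\le(k(l-k)+1)\,\sigma_{k+1}(Y)$; by Lemma \ref{lem:bhatia}, $\sigma_{k+1}(Y)\le\Vert G\Vert\,\sigma_{k+1}(A)$. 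To strip off the $G$, I introduce the SVD $PA=\hat U\Sigma V^T$ with the top-$k$ split $V=[V_1\,|\,V_2]$, the matching blocks $\Sigma_1,\Sigma_2$ of $\Sigma$, and $\hat U=[\hat U_1\,|\,\hat U_2]$, and I set $\Omega_1\newdef V_1^TG$ (size $k\times l$) and $\Omega_2\newdef V_2^TG$. By rotation invariance of the Gaussian, $\Omega_1$ is itself a $k\times l$ Gaussian matrix, hence of full row rank almost surely, so $\Omega_1\Omega_1^{\dagger}=I_k$. Choosing $X\newdef L_yM\,\Omega_1^{\dagger}V_1^T$ keeps $\mathrm{range}(X)\subseteq\mathrm{range}(L_y)$.

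With this $X$ I split $\Vert PA-X\Vert\le\Vert PA-PAG\,\Omega_1^{\dagger}V_1^T\Vert+\Vert(PAG-L_yM)\Omega_1^{\dagger}V_1^T\Vert$. For the second term the RRLU bound together with $\Vert V_1^T\Vert=1$ gives at most $(k(l-k)+1)\,\Vert G\Vert\,\Vert\Omega_1^{\dagger}\Vert\,\sigma_{k+1}(A)$. For the first term the key computation is $PAG\,\Omega_1^{\dagger}V_1^T=\hat U_1\Sigma_1\Omega_1\Omega_1^{\dagger}V_1^T+\hat U_2\Sigma_2\Omega_2\Omega_1^{\dagger}V_1^T=\hat U_1\Sigma_1V_1^T+\hat U_2\Sigma_2\Omega_2\Omega_1^{\dagger}V_1^T$, so that $PA-PAG\,\Omega_1^{\dagger}V_1^T=\hat U_2\Sigma_2\big(V_2^T-\Omega_2\Omega_1^{\dagger}V_1^T\big)=\hat U_2\Sigma_2\,[-\Omega_2\Omega_1^{\dagger}\,|\,I]\,V^T$. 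Since $\hat U_2$ and $V^T$ have orthonormal rows/columns and $\Vert[\,B\,|\,I\,]\Vert=\sqrt{\Vert B\Vert^2+1}$, this term is at most $\sigma_{k+1}(A)\sqrt{\Vert\Omega_2\Omega_1^{\dagger}\Vert^2+1}$.

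Finally I would insert the probabilistic estimates and union bound. Lemma \ref{lem:bigval} (with $m=n\ge l$) gives $\Vert G\Vert\le\sqrt{2n}\gamma$, which also bounds $\Vert\Omega_2\Vert\le\Vert G\Vert$; and a standard lower tail bound for the smallest singular value of the Gaussian matrix $\Omega_1$ gives $\Vert\Omega_1^{\dagger}\Vert\le\sqrt{l}\,\beta$, each failing only with the corresponding probability subtracted in \eqref{eq:rand_lu_err_prob}. Hence $\Vert\Omega_2\Omega_1^{\dagger}\Vert\le\sqrt{2nl}\,\beta\gamma$, the first term is at most $\sqrt{2nl\beta^2\gamma^2+1}\,\sigma_{k+1}(A)$ and the second at most $\sqrt{2nl}\,\beta\gamma\,(k(n-k)+1)\,\sigma_{k+1}(A)$ (using $k(l-k)+1\le k(n-k)+1$), and these combine to the asserted bound; the leading factors of $2$ are slack absorbed in a slightly coarser combination of the two estimates. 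The main obstacle is precisely this de-projection step: converting the RRLU guarantee on the sketch $PAG$ into one on $PA$ forces the inversion by $\Omega_1^{\dagger}$, and the whole argument hinges on controlling $\Vert\Omega_1^{\dagger}\Vert$, i.e.\ on the lower tail of the smallest singular value of a Gaussian matrix.
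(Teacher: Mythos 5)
Your proposal is correct and actually proves a slightly sharper bound than the one stated. It shares the paper's skeleton: both arguments reduce the error to $\Vert L_yL_y^\dagger PAQ-PAQ\Vert$, split it into a range-capture term $\Vert PA-PAGF\Vert$ plus a sketch-error term $\Vert F\Vert\,\Vert PAG-L_yU_yQ_y^T\Vert$ controlled by the RRLU bound (Lemma \ref{lem:rrlu_approx_err}) together with Lemma \ref{lem:bhatia}, and invoke the same two probabilistic ingredients (Lemma \ref{lem:bigval} for $\Vert G\Vert$ and the Chen--Dongarra lower tail for the smallest singular value of a $k\times l$ Gaussian matrix). Where you genuinely diverge is in the middle mechanism. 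First, the paper passes through Lemma \ref{lem:rand_lu_bound1}, whose two triangle inequalities are the source of the factors of $2$ in the theorem; you instead exploit the fact that $L_yL_y^\dagger$ is an orthogonal projection (legitimate, since $L_{11}$ is unit lower triangular, so the truncated $L_y$ has full column rank) and use quasi-optimality, $\Vert(I-\Pi)PA\Vert\le\Vert PA-X\Vert$ for any $X$ with range in $\mathrm{range}(L_y)$. This removes both factors of $2$, so your final estimate $\left(\sqrt{2nl\beta^2\gamma^2+1}+\sqrt{2nl}\beta\gamma\left(k(n-k)+1\right)\right)\sigma_{k+1}(A)$ implies the stated theorem with slack, exactly as you observe. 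Second, the paper cites Lemma \ref{lem:rand_gauss} as a black box for the existence of $F$, whereas you reconstruct it explicitly ($F=\Omega_1^\dagger V_1^T$ with $\Omega_1=V_1^TG$), which is precisely the construction the paper spells out only later, in the proof of Lemma \ref{lem:rand_gauss2}; your write-up is therefore more self-contained and quantitatively tighter. What the paper's modular route buys is reusability: Lemma \ref{lem:rand_lu_bound1} accepts any de-projection pair, which is how the identical proof template is recycled for the subgaussian bound (Theorem \ref{trm:error_bound2}) and the SRFT version (Lemma \ref{lem:srftLUbound}), while your inlined construction is tied to the Gaussian case. Two points you should make explicit: the inequality $k(l-k)+1\le k(n-k)+1$ and the application of Lemma \ref{lem:bigval} with its parameter $m$ set to $n$ both require $l\le n$ (an assumption the paper also makes implicitly through Lemma \ref{lem:rand_gauss}), and the union bound over the events on $\Vert G\Vert$ and on $\sigma_k(\Omega_1)$ needs no independence, which is indeed all you use.
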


The proof of Theorem \ref{trm:rand_lu_err} is given in  Section \ref{sec:bounds_rand_lu}.
To show that the success probability $\xi$ in Eq. \eqref{eq:rand_lu_err_prob} is sufficiently high, 
we present several calculated values of $\xi$ in Table \ref{table:ran_lu_err_values}. 
We omitted the value of $n$ from Table \ref{table:ran_lu_err_values} since it does not affect the value of $\xi$
due to the fact that the second term in Eq. \eqref{eq:rand_lu_err_prob} decays fast.

\begin{table}[H]
\centering
\caption{Calculated values for the success probability $\xi$ (Eq. \eqref{eq:rand_lu_err_prob}). 
The terms $l-k$, $\beta$  and $\gamma$ appear in Eq.~\ref{eq:rand_lu_err_prob}.}
\label{table:ran_lu_err_values}
\begin{tabular}{c c c c}
\hline 
$l-k$  & $\beta$  & $\gamma$  & $\xi$  
\rule{0pt}{2.6ex} \rule[-1.2ex]{0pt}{0pt}\\
[0.5ex] \hline
3 &	 5 &	 5 &	 $1 - 6.8\times 10^{-5}$ \\
5 &	 5 &	 5 &	 $1 - 9.0\times 10^{-8}$ \\
10 &	 5 &	 5 &	 $1 - 5.2 \times 10^{-16}$ \\
3 &	 30 &	 5 &	 $1 - 5.2 \times 10^{-8}$ \\
5 &	 30 &	 5 &	 $1 - 1.9 \times 10^{-12}$ \\
10 &	 30 &	 5 &	 $1 - 1.4 \times 10^{-24}$ \\
3 &	 30 &	 10 &	 $1 - 5.2\times 10^{-8}$ \\
5 &	 30 &	 10 &	 $1 - 1.9\times 10^{-12}$ \\
10 &	 30 &	 10 &	 $1 - 1.4\times 10^{-24}$ \\
[1ex] \hline
\end{tabular}
\end{table}

In Section \ref{sec:numerical_results}, we show that in practice, Algorithm \ref{alg:randomized_lu} produces comparable 
results to other well-known randomized factorization methods of low rank matrices such as randomized SVD and randomized ID.

\subsection{Computational Complexity Analysis}
\label{sec:comp_cost}
To compute the number of floating points operations in Algorithm \ref{alg:randomized_lu}, we evaluate the complexity of each step:
\begin{description}
\item[Step 1:] Generating an $n \times l$ random matrix requires $\mathcal{O}(nl)$ operations.
\item[Step 2:] Multiplying $A$ by $G$ to form $Y$ requires $l\mathcal{C}_A$ operations, where $\mathcal{C}_A$ is the 
complexity of applying $A$ to an $n \times 1$ column vector. 
\item[Step 3:] Partial pivoting computation of LU for $Y$ requires $\mathcal{O}(ml^2)$ operations.
\item[Step 4:] Selecting the first $k$ columns (we do not modify them) requires $\mathcal{O}(1)$ operations.
\item[Step 5:] Computing the pseudo inverse of $L_y$ requires $\mathcal{O}(k^2m+k^3+k^2m)$ operations and 
multiplying it by $A$ requires $k\mathcal{C}_{A^T}$ operations. 
Note that $P$ is a permutation matrix that does not modify the rows of $A$.
\item[Step 6:] Computing the partial pivoting LU for $B$ requires $\mathcal{O}(k^2n)$ operations.
\item[Step 7:] Computing $L$ requires $\mathcal{O}(k^2 m)$ operations.
\item[Step 8:] Computing $U$ requires $\mathcal{O}(1)$ operations.
\end{description}
By summing up the complexities of all the steps above, then Algorithm \ref{alg:randomized_lu} necessitated 
\begin{equation}
\mathcal{C}_{Rand LU}=l\mathcal{C}_A+k\mathcal{C}_{A^T}+\mathcal{O}(l^2m+k^3+k^2n)
\end{equation}
operations.
Here, we used $C_A$ (and $C_{A^T}$) to denote the complexity from the application of $A$ (and $A^T$) to  a vector, respectively.
For a general $A$, $\mathcal{C}_A=\mathcal{C}_{A^T}=\mathcal{O}(mn)$.

\subsection{Bounds for the Randomized LU (Proof of Theorem~\ref{trm:rand_lu_err}) }
\label{sec:bounds_rand_lu}
In this section, we prove Theorem \ref{trm:rand_lu_err} and provide an additional complementary bound. 
This is done by finding a basis to the smaller matrix $AG$, which is achieved in practice by using RRLU. 
The assumptions are that $L$ is numerically stable so its pseudo-inverse can be computed accurately, 
 there exists a matrix $U$ such that $LU$ is a good approximation to $AG$ and  there exists a 
matrix $F$ such that $\Vert AGF-A\Vert$ is small. $L$ 
 is always numerically stable since it has a small condition number \cite{stewart1998triangular}.

Lemmas \ref{lem:rand_lu_bound1},\ref{lem:rand_gauss} and \ref{lem:rand_gauss2} are needed for the proof of Theorem \ref{trm:rand_lu_err}. 
Lemma \ref{lem:rand_lu_bound1} states that a given basis $L$ can form a basis for the columns in $A$ by
bounding the error $\Vert LL^\dagger A-A \Vert$. 

\begin{lemma} 
\label{lem:rand_lu_bound1}
Assume that $A$ is an $m \times n$ matrix, $L$ is an $m \times k$ matrix with rank $k$, $G$ is an $n \times l$ matrix, $l$ is an integer ($l \ge k$), $U$ is a $k \times l$ matrix
and $F$ is $l \times n$ ($k\le m$) matrix. Then,
\begin{equation}
\Vert LL^\dagger A-A \Vert \le 2\Vert AGF-A\Vert+2\Vert F \Vert \Vert LU-AG \Vert.
\end{equation}
\end{lemma}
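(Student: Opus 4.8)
The plan is to treat $P \newdef LL^\dagger$ as the orthogonal projector onto the column space of $L$, and to bound $\Vert PA - A\Vert$ by comparing $A$ against a matrix that the projector leaves unchanged. Since $L$ has full column rank $k$, the pseudo-inverse is $L^\dagger = (L^TL)^{-1}L^T$, so $P = L(L^TL)^{-1}L^T$ is symmetric and idempotent; hence $P$ is an orthogonal projector and $\Vert P \Vert \le 1$. The natural candidate for the comparison matrix is $LUF$: its columns are linear combinations of the columns of $L$, and because $LL^\dagger L = L$ (a Penrose identity) we get $P(LUF) = LUF$, i.e. the projector fixes $LUF$.

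With this in hand, I would insert $LUF$ via the triangle inequality,
\begin{equation}
\Vert LL^\dagger A - A \Vert \le \Vert LL^\dagger A - LUF \Vert + \Vert LUF - A \Vert ,
\end{equation}
and then rewrite the first term, using $LUF = P(LUF)$, as $\Vert P(A - LUF)\Vert \le \Vert P \Vert \, \Vert A - LUF \Vert \le \Vert A - LUF \Vert$. This collapses the bound to $\Vert LL^\dagger A - A \Vert \le 2\Vert LUF - A \Vert$, which is exactly where the factor of $2$ in the statement comes from.

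It then remains to estimate $\Vert LUF - A \Vert$. Here I would again apply the triangle inequality together with submultiplicativity of the spectral norm, inserting $AGF$:
\begin{equation}
\Vert LUF - A \Vert \le \Vert AGF - A \Vert + \Vert (LU - AG)F \Vert \le \Vert AGF - A \Vert + \Vert F \Vert \, \Vert LU - AG \Vert .
\end{equation}
Substituting this into the previous display yields the claimed inequality.

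The only genuinely delicate point is the choice of the intermediate matrix $LUF$ and the verification that the projector fixes it (so that the first term is controlled by $\Vert A - LUF\Vert$); everything else is the triangle inequality and the operator-norm bounds $\Vert P \Vert \le 1$ and $\Vert MF\Vert \le \Vert M\Vert\,\Vert F\Vert$. I note that a slightly sharper argument avoiding the factor $2$ is in fact available (using $(I-P)L = 0$ and $\Vert I-P\Vert \le 1$ directly, so that $\Vert(I-P)A\Vert = \Vert(I-P)(A-LUF)\Vert \le \Vert A-LUF\Vert$), but the two-step insertion above is what reproduces the stated bound, and the looser constant is harmless for the downstream estimates.
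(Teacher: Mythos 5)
Your proof is correct, and its route differs from the paper's in a way worth noting. The paper uses a three-term triangle inequality through the intermediate matrices $LL^\dagger AGF$ and $AGF$: the first difference is bounded by $\Vert A-AGF\Vert$ using $\Vert LL^\dagger\Vert\le 1$, and the middle difference is bounded by $\Vert F\Vert\,\Vert LL^\dagger AG-AG\Vert$, which is then controlled by a second, nested triangle inequality through $LL^\dagger LU$ and $LU$, using $L^\dagger L=I$ to kill $\Vert LL^\dagger LU-LU\Vert$; that nested step is where the paper's factor $2\Vert LU-AG\Vert$ arises. You instead insert the single comparison matrix $LUF$, observe that the orthogonal projector $P=LL^\dagger$ fixes it (via the Penrose identity $LL^\dagger L=L$), deduce $\Vert LL^\dagger A-A\Vert\le 2\Vert A-LUF\Vert$, and only then split $\Vert A-LUF\Vert\le\Vert AGF-A\Vert+\Vert F\Vert\,\Vert LU-AG\Vert$. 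Both arguments rest on exactly the same two facts ($\Vert LL^\dagger\Vert\le 1$ and that $LL^\dagger$ acts as the identity on the range of $L$) and yield identical constants, but your organization is more economical and, more importantly, exposes that the factor $2$ is superfluous: as you note, $(I-LL^\dagger)LUF=0$ and $\Vert I-LL^\dagger\Vert\le 1$ give $\Vert LL^\dagger A-A\Vert\le\Vert A-LUF\Vert\le\Vert AGF-A\Vert+\Vert F\Vert\,\Vert LU-AG\Vert$, a sharpening the paper's nested decomposition does not reveal; carried downstream, it would halve the constants in Theorems \ref{trm:rand_lu_err}, \ref{trm:error_bound2} and \ref{trm:fastRand_lu_err}.
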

\begin{proof}
By using the triangular inequality we get
\begin{equation}
\label{eq:rand_lu_bound1}
\Vert LL^\dagger A-A \Vert \le \Vert LL^\dagger A-LL^\dagger AGF \Vert+\Vert LL^\dagger AGF-AGF \Vert+\Vert AGF-A \Vert.
\end{equation}
Clearly, the first term can also be bounded by
\begin{equation}
\label{eq:rand_lu_bound11}
\Vert LL^\dagger A-LL^\dagger AGF \Vert \le \Vert LL^\dagger \Vert \Vert A-AGF \Vert \le \Vert A-AGF \Vert .
\end{equation}
The second term can be bounded by
\begin{equation}
\label{eq:rand_lu_bound2}
\Vert LL^\dagger AGF-AGF \Vert \le \Vert F\Vert \Vert LL^\dagger AG-AG \Vert .
\end{equation}
In addition,
\begin{equation}
\label{eq:rand_lu_bound3}
\Vert LL^\dagger AG-AG \Vert \le  \Vert LL^\dagger AG-LL^\dagger LU \Vert+\Vert LL^\dagger LU-LU\Vert+\Vert LU-AG\Vert.
\end{equation}
Since $L^\dagger L=I$, it follows that $\Vert LL^\dagger LU-LU \Vert=0$ and that 
$\Vert LL^\dagger AG-LL^\dagger LU \Vert \le \Vert AG-LU \Vert$. When combined with Eq. \eqref{eq:rand_lu_bound3} we obtain:
\begin{equation}
\label{eq:rand_lu_bound4}
\Vert LL^\dagger AG-AG \Vert \le 2 \Vert LU-AG\Vert.
\end{equation}
By substituting Eq. \eqref{eq:rand_lu_bound4} in Eq. \eqref{eq:rand_lu_bound2} we get
\begin{equation}
\label{eq:rand_lu_bound_final}
\Vert LL^\dagger AGF-AGF \Vert \le 2\Vert F \Vert \Vert LU-AG\Vert.
\end{equation}
By substituting Eqs. \eqref{eq:rand_lu_bound11} and \eqref{eq:rand_lu_bound_final} in Eq. \eqref{eq:rand_lu_bound1} we get
\begin{equation}
\Vert LL^\dagger A-A \Vert \le 2\Vert AGF-A\Vert+2\Vert F \Vert \Vert LU-AG \Vert.
\end{equation}
\end{proof}

Lemma \ref{lem:rand_gauss} appears in \cite{randecomp}. It uses a lower bound for the smallest singular value of a 
Gaussian matrix with zero mean and unit variance. This bound appears in \cite{chen2005condition}.
\begin{lemma}[\cite{randecomp}]
\label{lem:rand_gauss}
Assume that $k,l,m$ and $n$ are positive integers such that $k\le l$, $l \le \min{(m,n)}$. 
Assume that $A$ is a real $m \times n$ matrix, $G$ is $n \times l$ matrix whose entries are i.i.d Gaussian random variables 
of zero mean and unit variance, $\beta$ and $\gamma$ are real numbers, such that $\beta>0$, $\gamma>1$ and the quantity
\begin{equation}
\label{eq:rand_gauss_quantity}
1-\frac{1}{\sqrt{2\pi(l-k+1)}}\left(\frac{e}{(l-k+1)\beta}\right)^{l-k+1}-\frac{1}{4(\gamma^2-1)\sqrt{\pi n \gamma^2}}\left(\frac{2\gamma^2}{e^{\gamma^2-1}}\right)^n
\end{equation}
is non-negative. Then, there exists a real $l \times n$ matrix $F$ such that
\begin{equation}
\Vert AGF-A \Vert \le \sqrt{2nl\beta^2\gamma^2+1} \sigma_{k+1}(A) 
\end{equation}
and 
\begin{equation}
\Vert F \Vert \le \sqrt{l}\beta
\end{equation}
with probability not less than the value in Eq. \eqref{eq:rand_gauss_quantity}.
\end{lemma}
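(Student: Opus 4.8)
The plan is to reduce the probabilistic statement to a purely deterministic construction of $F$ from the SVD of $A$, and then to control two random quantities attached to a Gaussian matrix. First I would write the full SVD $A = U\Sigma V^T$ and split the right singular vectors as $V = [V_1 \mid V_2]$, where $V_1$ collects the leading $k$ columns, together with the conformal block form $\Sigma = \mathrm{diag}(\Sigma_1, \Sigma_2)$ so that $\Vert \Sigma_2 \Vert = \sigma_{k+1}(A)$. Because $V$ is orthogonal, the matrix $H \triangleq V^T G$ is again an $n \times l$ matrix of i.i.d. standard Gaussians; I split it as $H = \begin{pmatrix} H_1 \\ H_2 \end{pmatrix}$ with $H_1$ of size $k \times l$ and $H_2$ of size $(n-k) \times l$. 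Since $l \ge k$, the short-fat block $H_1$ has full row rank almost surely, so its right inverse $H_1^\dagger = H_1^T (H_1 H_1^T)^{-1}$ exists and satisfies $H_1 H_1^\dagger = I_k$. The candidate is then $F \triangleq H_1^\dagger V_1^T$, which has the required size $l \times n$.

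Next I would establish the deterministic error identity. Substituting the SVD gives $AG = U \begin{pmatrix} \Sigma_1 H_1 \\ \Sigma_2 H_2 \end{pmatrix}$, hence $AGF = U \begin{pmatrix} \Sigma_1 V_1^T \\ \Sigma_2 H_2 H_1^\dagger V_1^T \end{pmatrix}$ after using $H_1 H_1^\dagger = I_k$. Subtracting $A = U \begin{pmatrix} \Sigma_1 V_1^T \\ \Sigma_2 V_2^T \end{pmatrix}$ cancels the top block, leaving $AGF - A = U \begin{pmatrix} 0 \\ \Sigma_2 (H_2 H_1^\dagger V_1^T - V_2^T) \end{pmatrix}$. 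Writing $H_2 H_1^\dagger V_1^T - V_2^T = [\, H_2 H_1^\dagger \mid -I_{n-k}\,] V^T$ and using that $U$ and $V^T$ are orthogonal, I get $\Vert AGF - A \Vert \le \Vert \Sigma_2 \Vert \, \Vert [\, H_2 H_1^\dagger \mid -I_{n-k}\,] \Vert$. The block-norm identity $\Vert [\,M \mid -I\,]\Vert^2 = \Vert M M^T + I \Vert = \Vert M \Vert^2 + 1$ then yields the clean deterministic bound $\Vert AGF - A \Vert \le \sigma_{k+1}(A)\sqrt{\Vert H_2 H_1^\dagger \Vert^2 + 1}$; it is precisely this identity, rather than a crude triangle inequality, that produces the square-root form in the statement. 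In parallel, $\Vert F \Vert = \Vert H_1^\dagger V_1^T \Vert \le \Vert H_1^\dagger \Vert$ because $V_1$ has orthonormal columns.

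It remains to bound $\Vert H_2 H_1^\dagger \Vert \le \Vert H_2 \Vert \, \Vert H_1^\dagger \Vert$ (Lemma \ref{lem:bhatia}) by controlling the two Gaussian factors. For the numerator I would bound $\Vert H_2 \Vert \le \Vert H \Vert \le \sqrt{2n}\gamma$ by applying Lemma \ref{lem:bigval} to the $n \times l$ Gaussian $H$ with $m = n$ (legitimate since $l \le n$), which fails with probability at most the second subtracted term of the displayed quantity. Feeding $\Vert H_2 H_1^\dagger \Vert^2 \le 2nl\beta^2\gamma^2$ into the deterministic bound gives $\sqrt{2nl\beta^2\gamma^2 + 1}\,\sigma_{k+1}(A)$, and the same control on $\Vert H_1^\dagger \Vert$ gives $\Vert F \Vert \le \sqrt{l}\beta$; a union bound over the two failure events produces the stated success probability.

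The main obstacle is the lower bound on the least singular value of the fat Gaussian block $H_1$, i.e. showing $\Vert H_1^\dagger \Vert = 1/\sigma_{\min}(H_1) \le \sqrt{l}\beta$ with the first (dominant) probability term. Unlike the largest singular value, which is handed to us by Lemma \ref{lem:bigval}, the smallest-singular-value tail of a $k \times l$ Gaussian is not among the results quoted so far in the excerpt — Theorem \ref{trm:smallest_sv} concerns tall matrices, the opposite regime — so I would invoke the sharp least-singular-value estimate for Gaussian matrices from \cite{chen2005condition}, namely $\mathbb{P}\big(\sigma_{\min}(H_1) \le 1/(\sqrt{l}\beta)\big) \le \frac{1}{\sqrt{2\pi(l-k+1)}}\big(\frac{e}{(l-k+1)\beta}\big)^{l-k+1}$. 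Verifying that this estimate applies with the correct exponent $l-k+1$ and matches the first term of the probability is the delicate part; everything else is orthogonal-invariance bookkeeping and submultiplicativity of singular values.
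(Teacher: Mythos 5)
Your proposal is correct and is essentially the argument this paper relies on: the paper only cites this lemma from \cite{randecomp}, but its own proof of the analogous Lemma \ref{lem:rand_gauss2} uses exactly your construction --- SVD of $A$, orthogonal invariance of $V^T G$, the choice $F = H_1^\dagger V_1^T$ built from the pseudo-inverse of the top $k\times l$ Gaussian block, block cancellation, and the bound $\Vert AGF - A\Vert \le \sqrt{\Vert H_2 H_1^\dagger \Vert^2 + 1}\,\sigma_{k+1}(A)$. Your instantiation with the Chen--Dongarra least-singular-value tail for the fat block $H_1$ (exponent $l-k+1$) and Lemma \ref{lem:bigval} applied with $m=n$ to control $\Vert H_2\Vert \le \Vert H \Vert \le \sqrt{2n}\gamma$ reproduces the stated constants $\sqrt{2nl\beta^2\gamma^2+1}$ and $\sqrt{l}\beta$ and both subtracted probability terms exactly, matching the original proof in \cite{randecomp}.
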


Lemma \ref{lem:rand_gauss2} rephrases Lemma \ref{lem:rand_gauss} 
by utilizing the bounds that appear in Section \ref{sec:sparse}.
The proof is close to the argumentation that appears in the 
proof of Lemma \ref{lem:rand_gauss}.

\begin{lemma}
\label{lem:rand_gauss2}
Let $A$ be a real $m\times n$ ($m \ge n$) matrix.
Let $G$ be a real $n \times l$ matrix whose entries are Gaussian i.i.d with zero mean and unit variance. 
Let $k$ and $l$ be integers such that $l<\min{(m,n)}$ and $l > \left( 1+\frac{1}{\ln k} \right)k$. 
We define $a_1,a_2,c_1$ and $c_2$ as in Theorem \ref{trm:smallest_sv}. Then, there exists a real matrix $F$ of size $l \times n$ such that
\begin{equation}
\Vert AGF-A \Vert \le \sqrt{\frac{a_1^2 n}{c_1^2 l}+1}\sigma_{k+1}(A),
\end{equation}
and
\begin{equation}
\Vert F \Vert \le \frac{1}{c_1\sqrt{l}}
\end{equation}
with probability not less than $1-e^{-c_2 l}-e^{-a_2 n}$.
\end{lemma}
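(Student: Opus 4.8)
The plan is to follow the argument behind Lemma \ref{lem:rand_gauss}, keeping its explicit construction of $F$ and the accompanying deterministic norm estimate intact, and to replace only the two probabilistic singular-value inputs by the subgaussian bounds of Theorems \ref{trm:subgauss} and \ref{trm:smallest_sv}. First I would fix an SVD $A=U\Sigma V^{T}$ and split the right singular vectors as $V=[\,V_k \mid V_\perp\,]$, where $V_k$ collects the first $k$ columns. Writing $S:=V_k^{T}G$ (of size $k\times l$) and $T:=V_\perp^{T}G$, I would take as candidate
\[ F := S^{\dagger}V_k^{T}, \]
an $l\times n$ matrix that is well defined as soon as $S$ has full row rank $k$.

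The deterministic core is to bound $\Vert AGF-A\Vert=\Vert A(GF-I_n)\Vert$. Using $SS^{\dagger}=I_k$, a direct computation shows that the top $k$ rows of $V^{T}(GF-I_n)$ vanish and the remaining rows equal $W:=TS^{\dagger}V_k^{T}-V_\perp^{T}$; multiplying by $\Sigma$ annihilates everything except the tail singular values, so $\Vert AGF-A\Vert \le \sigma_{k+1}(A)\,\Vert W\Vert$. Because $V$ is orthogonal and $V_\perp^{T}V_k=0$, I would rewrite $W=\begin{pmatrix} TS^{\dagger} & -I_{n-k}\end{pmatrix}V^{T}$, whence $\Vert W\Vert = \Vert \begin{pmatrix} TS^{\dagger} & -I_{n-k}\end{pmatrix}\Vert = \sqrt{\Vert TS^{\dagger}(TS^{\dagger})^{T}+I\Vert} \le \sqrt{\Vert T\Vert^{2}\Vert S^{\dagger}\Vert^{2}+1}$. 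Similarly, since $V_k$ has orthonormal columns, $\Vert F\Vert=\Vert S^{\dagger}V_k^{T}\Vert=\Vert S^{\dagger}\Vert$. This reduces the whole estimate to controlling $\Vert T\Vert\le\Vert G\Vert$ and $\Vert S^{\dagger}\Vert = 1/\sigma_k(S)$.

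The probabilistic step is where the new input enters. By rotational invariance of the Gaussian, $S=V_k^{T}G$ is distributed as a $k\times l$ i.i.d. Gaussian matrix, so $G^{T}V_k$ is an $l\times k$ Gaussian matrix that is tall precisely because $l>\left(1+\tfrac{1}{\ln k}\right)k$. Applying Theorem \ref{trm:smallest_sv} to $G^{T}V_k$ gives $\sigma_k(S)=\sigma_k(G^{T}V_k)\ge c_1\sqrt{l}$, hence $\Vert S^{\dagger}\Vert\le 1/(c_1\sqrt{l})$, outside an event of probability at most $e^{-c_2 l}$. Applying Theorem \ref{trm:subgauss} to the $n\times l$ matrix $G$ (using $n\ge l$) gives $\Vert G\Vert\le a_1\sqrt{n}$ outside an event of probability at most $e^{-a_2 n}$. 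A union bound yields the success probability $1-e^{-c_2 l}-e^{-a_2 n}$, and on the complementary event substituting the two bounds into the deterministic estimate produces $\Vert F\Vert\le 1/(c_1\sqrt{l})$ and $\Vert AGF-A\Vert\le\sqrt{a_1^{2}n/(c_1^{2}l)+1}\,\sigma_{k+1}(A)$, as claimed.

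The genuinely delicate point is the deterministic block estimate: obtaining the sharp factor $\sqrt{\Vert T\Vert^{2}\Vert S^{\dagger}\Vert^{2}+1}$ rather than the loose $1+\Vert T\Vert\Vert S^{\dagger}\Vert$ that a plain triangle inequality would give requires exploiting the orthogonality of $V_k$ and $V_\perp$ to collapse the two orthogonal row blocks of $W$ into a single block-row, whose norm is the square root of a sum of the block contributions. The remaining care is bookkeeping: verifying that Gaussian entries meet the conditions of Definition \ref{def:mat_family_A} so the subgaussian theorems apply with the stated constants, and confirming that the tall-matrix hypothesis of Theorem \ref{trm:smallest_sv} is exactly the hypothesis $l>\left(1+\tfrac{1}{\ln k}\right)k$ imposed on $G^{T}V_k$.
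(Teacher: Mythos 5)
Your proposal is correct and follows essentially the same route as the paper's own proof: the same choice $F = (V_k^T G)^{\dagger} V_k^T$ (the paper writes this as $F = \begin{pmatrix} H^{\dagger} & 0 \end{pmatrix} V^T$ with $H = V_k^T G$), the same SVD block computation giving the sharp factor $\sqrt{\Vert T\Vert^2 \Vert S^{\dagger}\Vert^2 + 1}$, and the same probabilistic inputs --- Theorem \ref{trm:smallest_sv} applied to the tall $l \times k$ Gaussian block for $\Vert S^{\dagger}\Vert \le 1/(c_1\sqrt{l})$ and Theorem \ref{trm:subgauss} for $\Vert G \Vert \le a_1\sqrt{n}$ --- combined by a union bound. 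The only differences are notational ($S,T$ versus the paper's $H,R$) and the order in which $\sigma_{k+1}(A)$ is factored out of the block estimate.
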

\begin{proof}
We begin by the application of SVD to $A$ such that
\begin{equation}
\label{eq:rand_gauss21}
A=U\Sigma V^T,
\end{equation}
where $U$ is orthogonal $m\times m$ matrix, $\Sigma$ is $m\times n$ diagonal matrix with non-negative 
entries and $V$ is orthogonal $n \times n$ matrix.
Assume that given $V^T$ and  $G$, suppose that 
\begin{equation}
\label{eq:rand_gauss22}
V^T G=\begin{pmatrix} H \\ R \end{pmatrix},
\end{equation}
where $H$ is $k\times l$ matrix and $R$ is $(n-k)\times l$ matrix. 
Since $G$ is a Gaussian i.i.d. matrix and $V$ is an orthogonal matrix, then $V^T G$ is also a Gaussian i.i.d. matrix.
Therefore, $H$ is a Gaussian i.i.d. matrix.
Define $F=PV^T$, where $P$ is a matrix of size $l \times n$ such that $P=\begin{pmatrix} H^\dagger & 0.\end{pmatrix}$ Therefore, 
\begin{equation}
\label{eq:rand_gauss23}
F=\begin{pmatrix} H^\dagger & 0 \end{pmatrix}V^T.
\end{equation}
By computing $\Vert F \Vert$ using Theorem \ref{trm:smallest_sv}, we get
\begin{equation}
\label{eq:norm_bound_F}
\Vert F \Vert = \Vert PV^T \Vert=\Vert H^\dagger \Vert=\Vert H^T(HH^T)^{-1} \Vert=\frac{1}{\sigma_k(H)} \le \frac{1}{c_1\sqrt{l}}
\end{equation}
with probability not less than $1-e^{-c_2 l}$. 
Now, we can bound $\Vert AGF-A \Vert$. 
By using Eqs. \eqref{eq:rand_gauss21}, \eqref{eq:rand_gauss22} and \eqref{eq:rand_gauss23} we get
\begin{equation}
\label{eq:rand_gauss24}
AGF-A=U\Sigma\begin{pmatrix} \begin{pmatrix} H \\ R \end{pmatrix} \begin{pmatrix} H^\dagger & 0 \end{pmatrix}-I \end{pmatrix}V^T.
\end{equation}
We define $S$ to be the  $k\times k$ upper-left block of $\Sigma$. Let $T$ to be  the $(n-k)\times (n-k)$  lower-right  block. Then,
\begin{equation*}
\Sigma\begin{pmatrix} \begin{pmatrix} H \\ R \end{pmatrix} \begin{pmatrix} H^\dagger & 0 \end{pmatrix}-I \end{pmatrix}=
\begin{pmatrix} S & 0 \\ 0 & T \end{pmatrix} \begin{pmatrix} 0 & 0 \\ RH^\dagger & -I \end{pmatrix}=\begin{pmatrix} 0 & 0 \\ TRH^\dagger & -T \end{pmatrix}.
\end{equation*}
The norm of the last term is:
\begin{equation}
\label{eq:rand_gauss25}
\left\Vert \begin{pmatrix} 0 & 0 \\ TRH^\dagger & -T \end{pmatrix} \right\Vert^2 \le \Vert TRH^\dagger \Vert^2 + \Vert T \Vert^2. 
\end{equation}
Therefore, by using Eqs. \eqref{eq:rand_gauss24}, \eqref{eq:rand_gauss25} and the fact that $\Vert T \Vert = \sigma_{k+1}(A)$, we get 
\begin{equation}
\label{lem:boundAGF}
\Vert AGF-A \Vert \le \sqrt{\Vert TRH^\dagger \Vert^2 + \Vert T \Vert^2} \le \sqrt{ \Vert H^\dagger \Vert^2 \Vert R \Vert^2+1} \sigma_{k+1}(A).
\end{equation}
We also know that
\begin{equation*}
\Vert R \Vert \le \Vert V^T G \Vert =\Vert G \Vert \le a_1\sqrt{n}
\end{equation*}
with probability not less than $1-e^{-a_2 n}$. Combining Eq. \eqref{lem:boundAGF} 
with the fact that $\Vert H^\dagger \Vert \le \frac{1}{c_1\sqrt{l}}$ and $\Vert R \Vert \le a_1\sqrt{n}$ gives
\begin{equation}
\Vert AGF-A \Vert \le \sigma_{k+1}(A) \sqrt{\frac{a_1^2 n}{c_1^2 l}+1}.
\end{equation}
\end{proof}

\begin{rem}
\label{rem:asymp_bound}
In contrast to Lemma \ref{lem:rand_gauss} where $\Vert AGF-A\Vert=\mathcal{O}(\sqrt{nl})$ , 
Lemma \ref{lem:rand_gauss2} provides the bound $\Vert AGF-A\Vert=\mathcal{O}(\sqrt{\frac{n}{l}})$ that is tighter for large values of $l$.
\end{rem}
\begin{rem}
The condition $l > \left( 1+ \frac{1}{\ln k}\right)k$ in Lemma~\ref{lem:rand_gauss2} has to be satisfied to meet the error bounds. However, there are bounds for the case where $H$ is almost square ($l \approx k$) or square ($l=k$) and they are given in \cite{litvak2010}.
\end{rem}

\begin{proof}[Proof of Theorem \ref{trm:rand_lu_err}]
The error is given by  $\Vert LU-PAQ\Vert$ where $L,U,P$ and $Q$ 
are the outputs from Algorithm \ref{alg:randomized_lu} 
whose inputs are the matrix $A$ and integers $k$ and $l$.
From Steps 7 and 8 in Algorithm \ref{alg:randomized_lu}  we have
\begin{equation}
\Vert LU-PAQ \Vert=\Vert L_yL_bU_b-PAQ \Vert
\end{equation}
where $L_y$ is the $m \times k$ matrix in step 4 in Algorithm \ref{alg:randomized_lu}.
By using the fact that $BQ=L_bU_b=L_y^\dagger PAQ$, we get 
\begin{equation}
\label{eq:main_bound_proof}
\Vert LU-PAQ \Vert=\Vert L_yL_bU_b-PAQ \Vert=\Vert L_yL_y^\dagger PAQ-PAQ \Vert.
\end{equation}
The application of Lemma \ref{lem:rand_lu_bound1} to Eq. \eqref{eq:main_bound_proof} gives
\begin{equation}
\begin{array}{lll}
\label{eq:main_bound_proof2}
\Vert LU-PAQ \Vert &=& \Vert L_yL_y^\dagger PAQ-PAQ \Vert \\\\ 
& \le & 2\Vert PAQ\tilde{G}F-PAQ\Vert+2\Vert F \Vert \Vert L_yU_y-PAQ\tilde{G}\Vert
\end{array}
\end{equation}
where $U_y$ is the $k \times n$ matrix in step 4 in Algorithm \ref{alg:randomized_lu}.
This holds for any matrix $\tilde{G}$.
In particular, it holds for a  
matrix $\tilde{G}$ that satisfies $Q\tilde{G}=GQ_y$ where $G$ is a random 
Gaussian i.i.d. matrix. After rows and columns permutations, $G$ becomes $\tilde{G}$.  
Therefore, the last term in Eq. \eqref{eq:main_bound_proof2} can be reformulated as $\Vert L_yU_y-PAQ\tilde{G}\Vert=\Vert L_yU_y-PAGQ_y\Vert$ 
where $G$ is the random matrix in Algorithm \ref{alg:randomized_lu}.
By applying Lemmas \ref{lem:rrlu_approx_err} and \ref{lem:bhatia} to $\Vert L_yU_y-PAQ\tilde{G}\Vert$ we get
\begin{equation}
\begin{array}{lll}
\Vert L_yU_y-PAQ\tilde{G}\Vert & = & \Vert L_yU_y-PAGQ_y\Vert \\\\ 
& \le & (k(n-k)+1)\sigma_{k+1}(AG) \\\\
& \le & (k(n-k)+1)\Vert G \Vert \sigma_{k+1}(A).
\end{array}
\end{equation}
Lemma \ref{lem:rand_gauss} provides that $\Vert PAQ\tilde{G}F-PAQ \Vert \le \sqrt{2nl\beta^2\gamma^2+1}\sigma_{k+1}(A)$ 
and $\Vert F \Vert \le \sqrt{l}\beta$. By combining Lemmas \ref{lem:rand_gauss} and \ref{lem:bigval} we get
\begin{equation}
\Vert LU-PAQ \Vert \le \left(2\sqrt{2nl\beta^2\gamma^2+1}+2\sqrt{2nl}\beta\gamma \left( k(n-k)+1 \right)\right)\sigma_{k+1}(A),
\end{equation}
which completes the proof.
\end{proof}

\begin{rem} The error in Theorem \ref{trm:rand_lu_err} may appear large, especially for the case 
where $k \approx \frac{n}{2}$ and $n$ is large. 
Yet, we performed extensive numerical experiments showing that the actual error is much smaller 
when using Gaussian elimination with partial pivoting. 
Note that the error can decrease by increasing $k$. 
Numerical illustrations appear in section \ref{sec:numerical_results}.
\end{rem}

We now present an additional error bound that relies on \cite{litvak2005smallest}. Asymptotically, this is a tighter bound for large values of $n$ and $l$ since
it contains the term $\sqrt{\frac{n}{l}}$, which is smaller than the term $\sqrt{nl}$ in Theorem \ref{trm:rand_lu_err}. See also Remark \ref{rem:asymp_bound}.

\begin{theorem}
\label{trm:error_bound2}
Given a matrix $A$ of size $m \times n$, integers $k$ and $l$ such that $l>\left(1+\frac{1}{\ln k}\right)k$ and $a_2>0$.
By the application of Algorithm \ref{alg:randomized_lu} with $A,k$ and $l$ as its input parameters, the randomized LU decomposition satisfies
\begin{equation}
\Vert LU-PAQ \Vert \le \left( 2\sqrt{\frac{a_1^2 n}{c_1^2 l}+1}+\frac{2a_1\sqrt{n}}{c_1\sqrt{l}}\left(k(n-k)+1\right)\right)\sigma_{k+1}(A),
\end{equation}
with probability not less than $1-e^{-a_2 n}-e^{-c_2 l}$. 
The value of $c_1$ is given in Eq. \eqref{c1_const}, the value of $c_2$ is given in Eq. \eqref{c2_const} and $a_1$ is given by Theorem \ref{trm:subgauss}. 
$a_1,c_1$ and $c_2$ depend on $a_2$.
\end{theorem}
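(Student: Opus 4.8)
The plan is to reproduce the skeleton of the proof of Theorem~\ref{trm:rand_lu_err}, substituting the sharper subgaussian estimates of Lemma~\ref{lem:rand_gauss2} for the Gaussian estimates of Lemma~\ref{lem:rand_gauss}. The opening reduction is identical and needs no new ideas: exactly as in Eqs.~\ref{eq:main_bound_proof}--\ref{eq:main_bound_proof2}, I would write the error as $\Vert LU-PAQ\Vert=\Vert L_yL_y^\dagger PAQ-PAQ\Vert$ using Steps~7--8 together with $BQ=L_bU_b=L_y^\dagger PAQ$, and then apply Lemma~\ref{lem:rand_lu_bound1} to obtain
\begin{equation*}
\Vert LU-PAQ\Vert \le 2\Vert PAQ\tilde G F-PAQ\Vert+2\Vert F\Vert\,\Vert L_yU_y-PAQ\tilde G\Vert ,
\end{equation*}
for the same choice of $\tilde G$ satisfying $Q\tilde G=GQ_y$, so that $\Vert L_yU_y-PAQ\tilde G\Vert=\Vert L_yU_y-PAGQ_y\Vert$.

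The second step bounds the RRLU term exactly as before: Lemmas~\ref{lem:rrlu_approx_err} and~\ref{lem:bhatia} give $\Vert L_yU_y-PAGQ_y\Vert\le(k(n-k)+1)\sigma_{k+1}(AG)\le(k(n-k)+1)\Vert G\Vert\,\sigma_{k+1}(A)$. The genuine change enters only in the third step, where I would invoke Lemma~\ref{lem:rand_gauss2} in place of Lemma~\ref{lem:rand_gauss}: this supplies $\Vert PAQ\tilde G F-PAQ\Vert\le\sqrt{a_1^2n/(c_1^2 l)+1}\,\sigma_{k+1}(A)$ and $\Vert F\Vert\le 1/(c_1\sqrt l)$, and its hypotheses are precisely the conditions $l>(1+1/\ln k)k$, $l<m$, $l<n$ imposed in the statement. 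For the factor $\Vert G\Vert$ I would \emph{not} use Lemma~\ref{lem:bigval} (which is Gaussian-specific) but rather the subgaussian norm bound of Theorem~\ref{trm:subgauss} / condition~2 of Definition~\ref{def:mat_family_A}, namely $\Vert G\Vert\le a_1\sqrt n$ with probability at least $1-e^{-a_2 n}$. Assembling the three pieces and substituting $\Vert F\Vert\le 1/(c_1\sqrt l)$ and $\Vert G\Vert\le a_1\sqrt n$ into $2\Vert F\Vert\,\Vert L_yU_y-PAGQ_y\Vert$ yields the claimed coefficient $2\sqrt{a_1^2n/(c_1^2l)+1}+\frac{2a_1\sqrt n}{c_1\sqrt l}(k(n-k)+1)$.

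The only point requiring care, and what I would flag as the main (if modest) obstacle, is the probability accounting. Lemma~\ref{lem:rand_gauss2} already holds on an event of probability at least $1-e^{-c_2 l}-e^{-a_2 n}$, and its proof produces $\Vert F\Vert$ via the smallest-singular-value event (contributing $e^{-c_2 l}$) together with the very same norm estimate $\Vert R\Vert\le\Vert G\Vert\le a_1\sqrt n$ (contributing $e^{-a_2 n}$) that I need for the $\Vert G\Vert$ factor above. Hence the bound on $\Vert G\Vert$ is \emph{not} an independent event demanding a fresh union-bound term: it is subsumed in the failure probability already charged by Lemma~\ref{lem:rand_gauss2}. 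I would therefore verify that the $\Vert AGF-A\Vert$, $\Vert F\Vert$, and $\Vert G\Vert$ estimates are all simultaneously valid on this single event, so that the total success probability remains $1-e^{-a_2 n}-e^{-c_2 l}$ with no extra terms, as stated. The identification of $a_1,a_2,c_1,c_2$ with the constants of Theorem~\ref{trm:smallest_sv} (Eqs.~\ref{c1_const}--\ref{c2_const}) is inherited directly from Lemma~\ref{lem:rand_gauss2}, and their joint dependence on $a_2$ accounts for the remark that both constants depend on $a_2$.
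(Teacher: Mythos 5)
Your proposal is correct and follows essentially the same route as the paper: reduce to $\Vert L_yL_y^\dagger PAQ-PAQ\Vert$, apply Lemma~\ref{lem:rand_lu_bound1}, bound the RRLU term via Lemmas~\ref{lem:rrlu_approx_err} and~\ref{lem:bhatia}, and substitute the subgaussian estimates of Lemma~\ref{lem:rand_gauss2} together with $\Vert G\Vert\le a_1\sqrt{n}$ from Theorem~\ref{trm:subgauss}. Your explicit observation that the $\Vert G\Vert$ event is the same one already charged inside Lemma~\ref{lem:rand_gauss2} (so no additional union-bound term is needed) is a point the paper leaves implicit, and it correctly justifies the stated probability $1-e^{-a_2 n}-e^{-c_2 l}$.
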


\begin{proof}
By using steps 5,6,7 and 8 in Algorithm \ref{alg:randomized_lu}, we get that
\begin{equation}
\Vert LU-PAQ \Vert=\Vert L_yL_y^\dagger PAQ-PAQ \Vert.
\end{equation}
Then, from Lemma \ref{lem:rand_lu_bound1}
\begin{equation}
\label{eq:large_L_bound}
\Vert L_yL_y^\dagger PAQ-PAQ \Vert \le 2\Vert PAQ\tilde{G}F-PAQ\Vert+2\Vert F \Vert \Vert L_yU_y-PAQ\tilde{G}\Vert.
\end{equation}
From Lemma \ref{lem:rand_gauss2} we get that 
\begin{equation}
\label{eq:large_L_bound2}
\Vert PAQ\tilde{G}F-PAQ \Vert \le \sqrt{\frac{a_1^2n}{c_1^2 l}+1}\sigma_{k+1}(A).
\end{equation}
By using the same argumentation given in Theorem \ref{trm:rand_lu_err}, we get
\begin{equation}
\label{eq:large_L_bound3}
\Vert L_yU_y-PAQ\tilde{G}\Vert=\Vert L_yU_y-PAGQ_y\Vert \le \left(k(n-k)+1\right)\Vert G \Vert\sigma_{k+1}(A)
\end{equation}
where $G$ is the matrix used in Algorithm \ref{alg:randomized_lu} Step 1.
By combining Eqs. \eqref{eq:large_L_bound}, \eqref{eq:large_L_bound2} and \eqref{eq:large_L_bound3}, and 
since $\Vert F \Vert \le \frac{1}{c_1\sqrt{l}}$ and $\Vert G \Vert \le a_1\sqrt{n}$ (see Lemma \ref{lem:rand_gauss2} 
and Theorem \ref{trm:subgauss}, respectively),
we get that
\begin{equation}
\label{eq:large_L_bound_final}
\Vert LU-PAQ \Vert \le 2\sqrt{\frac{a_1^2 n}{c_1^2 l}+1}\sigma_{k+1}(A)+\frac{2a_1\sqrt{n}}{c_1\sqrt{l}}\left(k(n-k)+1\right)\sigma_{k+1}(A).
\end{equation}
\end{proof}

\subsection{Analysis of the bound in Theorem \ref{trm:error_bound2}}
In this section, we analyze the bound in Eq. \eqref{eq:large_L_bound_final}. Although Eq. \eqref{eq:large_L_bound_final} bounds the randomized LU decomposition error, this bound can be modified  
to be used for other randomized algorithms. 
Randomized algorithms use a range approximation step that generates a smaller matrix than the original matrix that approximates the range of the original matrix. 
The range approximation enables us to compute the approximated decomposition using a smaller matrix while maintaining a bounded error. The obtained
error of a randomized algorithm depends on the quality of the range approximation step.
Formally, range approximation of a matrix $A$ can be accomplished by finding an orthogonal matrix $Q$ such that $\Vert QQ^*A-A \Vert$ is bounded. Hence, $Q^*A$ is a smaller matrix than $A$ that approximates the range of $A$.
A randomized algorithm, which finds such an orthogonal basis, appears in \cite{halko2011finding} and described in Algorithm \ref{alg:randomized_orth}. This procedure is used in its non-orthogonal form using LU decomposition in steps 1-3 in Algorithm \ref{alg:randomized_lu}. 

\begin{algorithm}[H]
	\caption{Randomized algorithm with orthogonal basis}
	\label{alg:randomized_orth}
	\textbf{Input:} $A$ matrix of size $m \times n$ to decompose, matrix rank $k$, $l\ge k$ number of columns to use.\\
	\textbf{Output:} Matrix $Q$ of size $m \times k$ such that $\Vert QQ^*A-A \Vert$ is bounded, and $Q^*Q=I$
	are orthogonal permutation matrices, $L$ and $U$ are lower and upper triangular matrices, respectively.
	\begin{algorithmic}[1]
		\STATE Create a matrix $G$ of size $n \times l$ whose entries are i.i.d. Gaussian random variables 
		with zero mean and unit standard deviation.
		\STATE $Y \gets AG$.
		\STATE Construct a matrix $U$ whose columns form an orthonormal basis for the range of $Y$ using SVD.
		\STATE Construct a matrix $Q$ by grouping the first $k$ vectors from $U$.
	\end{algorithmic}
\end{algorithm}

By estimating $\Vert QQ^*A-A \Vert$ in the same way as was done in Eq. \eqref{eq:large_L_bound_final} and by using Lemma \ref{lem:rand_lu_bound1} instead of estimating  $\Vert LL^\dagger A-A \Vert$  we get 
\begin{equation}
\label{eq:bound_for_svd}
\Vert QQ^*A-A \Vert \le 2\sqrt{\frac{a_1^2 n}{c_1^2 l}+1}\sigma_{k+1}(A)+\frac{2a_1\sqrt{n}}{c_1\sqrt{l}}\sigma_{k+1}(A)
\end{equation}
with probability not less than $1-e^{-a_2 n}-e^{-c_2 l}$. 
The value of $c_1$ is given in Eq. \eqref{c1_const}, the value of $c_2$ is given in Eq. \eqref{c2_const} and the value of $a_2$ is given in Theorem \ref{trm:subgauss}.
All of them depend on $a_2$.

Equation ~\ref{eq:bound_for_svd} provides an alternative bound to the randomized SVD algorithm. 
By neglecting constants and by analyzing the asymptotic behavior of Eq. \eqref{eq:bound_for_svd} we get that for $n \gg l$
\begin{equation}
\label{eq:bound_svd_asym}
\Vert QQ^*A-A \Vert \le 2\sqrt{\frac{a_1^2 n}{c_1^2 l}+1}\sigma_{k+1}(A)+\frac{2a_1\sqrt{n}}{c_1\sqrt{l}}\sigma_{k+1}(A) \propto \sqrt{\frac{n}{l}}\sigma_{k+1}(A)
\end{equation}
with an asymptotic failure probability of $e^{-c_2 l}$.
Two bounds are given in \cite{halko2011finding}: 1. Expectation-based bound and 2. probability-based bound. An expectation-based bound that is sharp appears in \cite{witten2013randomized} and the probability bound is better than previously developed bounds in \cite{randecomp}. This probability-based bound is given in \cite{halko2011finding}, Corollary 10.9:

\begin{corollary}[\cite{halko2011finding}]
For $Q$ from Algorithm \ref{alg:randomized_orth} and $p \ge 4$ ($p=l-k$),
\begin{equation}
\label{eq:bound_halko}
\Vert QQ^*A-A \Vert \le (1+17\sqrt{1+k/p})\sigma_{k+1}+\frac{8\sqrt{k+p}}{p+1}\left(\sum_{j>k} \sigma_j^2\right)^{1/2}
\end{equation}
with failure probability of at most $6e^{-p}$.
\end{corollary}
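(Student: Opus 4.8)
The plan is to follow the two–stage argument of \cite{halko2011finding}: first establish a purely deterministic structural bound on $\Vert QQ^*A - A\Vert$ in terms of the tail singular values of $A$ and certain blocks of the test matrix $G$, and then invoke the distributional properties of Gaussian matrices to control those blocks. Throughout I take $Q$ to be an orthonormal basis for the full $l=k+p$ dimensional range of $Y=AG$, with $G$ the $n\times l$ Gaussian matrix of Algorithm \ref{alg:randomized_orth}.

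First I would pass to SVD coordinates. Writing $A=U\Sigma V^*$ and splitting $\Sigma=\mathrm{diag}(\Sigma_1,\Sigma_2)$ at index $k$ (so $\Vert\Sigma_2\Vert=\sigma_{k+1}$ and $\Vert\Sigma_2\Vert_F^2=\sum_{j>k}\sigma_j^2$), set $G_1=V_1^*G$ of size $k\times l$ and $G_2=V_2^*G$ of size $(n-k)\times l$. Since $QQ^*$ is the orthogonal projector onto $\mathrm{range}(Y)$ and $\mathrm{range}(YM)\subseteq\mathrm{range}(Y)$ for any fixed $M$, comparing the projector onto $\mathrm{range}(Y)$ with the projector onto the range of the explicit matrix $Z=YG_1^\dagger\Sigma_1^{-1}$ (whose top block, in $U$–coordinates, is the identity) yields the deterministic estimate
\[
\Vert QQ^*A-A\Vert^2\le \Vert\Sigma_2\Vert^2+\Vert\Sigma_2 G_2 G_1^\dagger\Vert^2,
\]
valid whenever $G_1$ has full row rank, with the degenerate case $\sigma_k=0$ handled by a limiting argument. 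This is the structural core; the elementary inequality $\sqrt{x^2+y^2}\le x+y$ then separates the two summands that appear in the final bound.

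Next I would exploit that $G$ is standard Gaussian and $V$ orthogonal, so $V^*G$ is again standard Gaussian and the blocks $G_1,G_2$ are independent Gaussian matrices. Conditioning on $G_1$, the matrix $\Sigma_2 G_2 G_1^\dagger$ has the form $S\,G_2\,T$ with $S=\Sigma_2$ and $T=G_1^\dagger$ fixed and $G_2$ Gaussian. For such matrices the mean obeys $\mathbb{E}\Vert S G_2 T\Vert\le \Vert S\Vert\,\Vert T\Vert_F+\Vert S\Vert_F\,\Vert T\Vert$, and, because $G_2\mapsto\Vert S G_2 T\Vert$ is Lipschitz with constant $\Vert S\Vert\,\Vert T\Vert$, Gaussian concentration of measure gives a subgaussian deviation tail around this mean. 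Separately, the pseudoinverse of the $k\times(k+p)$ Gaussian matrix $G_1$ is controlled through the least singular value of a Gaussian matrix: for $p\ge 2$ one has $\mathbb{E}\Vert G_1^\dagger\Vert_F^2=k/(p-1)$ together with the tail estimate $\mathbb{P}\{\Vert G_1^\dagger\Vert\ge t\,e\sqrt{k+p}/(p+1)\}\le t^{-(p+1)}$.

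Finally I would combine these ingredients. Integrating the concentration bound for $\Sigma_2 G_2 G_1^\dagger$ against the tail bound for $\Vert G_1^\dagger\Vert$, and choosing the free deviation parameters (essentially $t=e$ in the pseudoinverse tail and a deviation of order $\sqrt{p}$ in the concentration step) so that each of the several failure events carries probability at most a constant multiple of $e^{-p}$, produces the advertised constants $1+17\sqrt{1+k/p}$ and $8\sqrt{k+p}/(p+1)$ and the aggregate failure probability $6e^{-p}$. I expect the main obstacle to be precisely this last bookkeeping step: the two sources of randomness ($G_1$ through $\Vert G_1^\dagger\Vert$ and $G_2$ through concentration) must be decoupled carefully, and the numerical constants have to be propagated simultaneously through the expectation bound, the concentration inequality, and the singular-value tail so that the constants $17$, $8$, and $6$ emerge with failure probability exactly $6e^{-p}$ under the sole hypothesis $p\ge 4$.
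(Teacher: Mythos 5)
This corollary is not proved in the paper at all: it is quoted verbatim from \cite{halko2011finding} (Corollary 10.9 there) solely as a benchmark against which the authors compare their own bound in Eq.~\ref{eq:bound_for_svd}, so there is no in-paper argument to compare against. Your sketch correctly reconstructs the proof from that source --- the deterministic estimate $\Vert QQ^*A-A\Vert^2\le\Vert\Sigma_2\Vert^2+\Vert\Sigma_2 G_2 G_1^\dagger\Vert^2$ (their Theorem 9.1), the Gaussian expectation, concentration, and pseudoinverse estimates (their Propositions 10.1--10.4), and the parameter choices that produce the constants $17$, $8$, and the failure probability $6e^{-p}$ (their Theorem 10.8 specialized to Corollary 10.9) --- so it is essentially the same approach as the authoritative proof, with only the final constant bookkeeping left unexecuted, as you yourself note.
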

We now compare the asymptotic behavior of Eqs. \eqref{eq:bound_halko} with \eqref{eq:bound_for_svd} for the case  of a fixed $\sigma_j$, $j>k$,  $\sigma_{k+1}=\sigma_{k+2}=\dots=\sigma_{\min(m,n)}$. The asymptotic behavior for $n \gg k+p$ of Eq. \eqref{eq:bound_halko} is given by
\begin{equation}
\label{eq:bound_halko_asym}
\Vert QQ^*A-A \Vert \le (1+17\sqrt{1+k/p})\sigma_{k+1}+\frac{8\sqrt{k+p}}{p+1}\left(\sum_{j>k} \sigma_j^2\right)^{1/2} \propto \frac{\sqrt{(k+p)(n-k)}}{p+1}\sigma_{k+1}.
\end{equation}
Comparison between Eqs. \eqref{eq:bound_halko_asym} and \eqref{eq:bound_svd_asym} shows that Eq. \eqref{eq:bound_svd_asym} provides a better bound since Eq. \eqref{eq:bound_halko_asym} has an additional factor of $\sqrt{k+p}$ in the numerator in comparison to Eq. \eqref{eq:bound_svd_asym} and a smaller denominator than the one in Eq. \eqref{eq:bound_svd_asym}. Also, the failure probability is smaller in Eq. \eqref{eq:bound_svd_asym} since the exponents depend on $l$ instead of $p$.

The bound in Eq. \eqref{eq:bound_for_svd} is useful especially for large values of $l$. We assume that $n \gg l$ and $\sigma_j=\sigma$ for $j>k$. Next, we show a numerical example that illustrates the bounds. $m=2\cdot 10^8, n=10^8, k=990, l=1000$ and $a_2=1$. Computation of $a_1,c_1$ and $c_2$, which uses Theorems \ref{trm:subgauss} and \ref{trm:smallest_sv} provides $a_1=15.68$, $c_1=0.022$, $c_2=0.011$. Substituting these values in Eq. \eqref{eq:bound_for_svd}, privde
\begin{equation}
\label{eq:num_example1}
\Vert QQ^*A-A \Vert \le 2.9 \cdot 10^5 \sigma_{k+1}
\end{equation}
with failure probability  $1.1 \cdot 10^{-49}$. The same setup for Eq. \eqref{eq:bound_halko} gives
\begin{equation}
\label{eq:num_example2}
\Vert QQ^*A-A \Vert \le 7.28\cdot 10^5 \sigma_{k+1}
\end{equation}
with failure probability $2.72\cdot 10^{-4}$. Clearly, in this example, Eq.  \ref{eq:bound_for_svd} provides a better bound for both accuracy and failure probability.

Figure \ref{fig:bound_oversample} compares the asymptotic behaviors of the bounds in Eqs. \eqref{eq:bound_halko_asym} and \eqref{eq:bound_svd_asym}. This figure shows that when there is a small oversampling (small $p$), then the bound in Eq. \eqref{eq:bound_for_svd}, which is indicated by the red line, is asymptotically better in comparison to the bound in Eq. \eqref{eq:bound_halko_asym} which is indicated by the dashed blue line. As the oversampling increases, the bounds coincide.
\begin{figure}[H]
	\centering
	\includegraphics[scale=0.1]{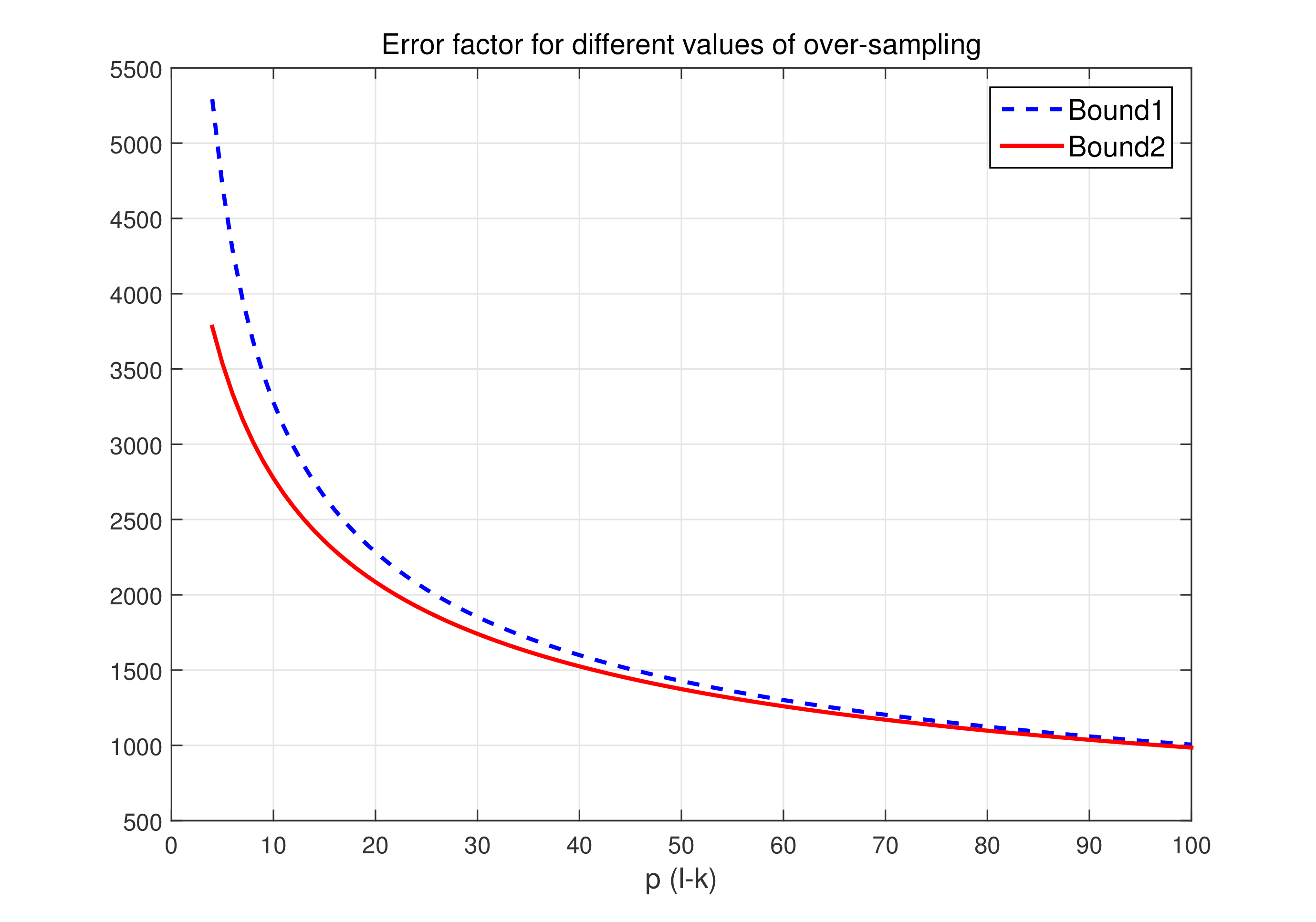}
	\caption{Bound values vs. oversampling for $k=3$, $p=4,5,\ldots,100$, $l=k+p$.}
	\label{fig:bound_oversample}
\end{figure}
Figure \ref{fig:bound_fixedp} shows the asymptotic behavior of Eq. \eqref{eq:bound_for_svd} for different values of $k$ and a fixed $p$. The red line illustrates Eqs. \eqref{eq:bound_svd_asym} and  \eqref{eq:bound_for_svd} and the blue dashed line illustrates Eqs. \eqref{eq:bound_halko_asym} and \eqref{eq:bound_halko}.
\begin{figure}[H]
	\centering
	\includegraphics[scale=0.1]{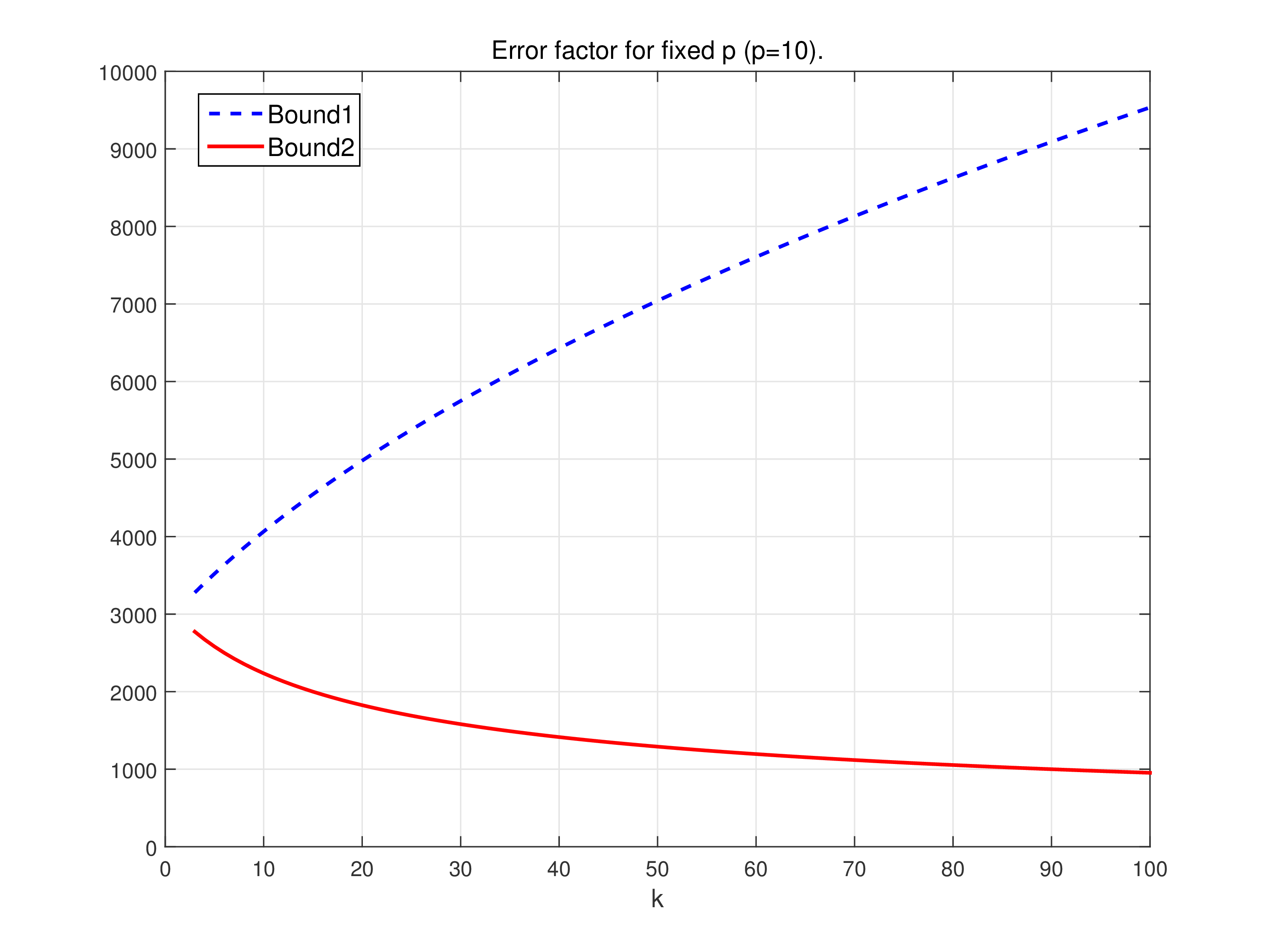}
	\caption{Bound values  for fixed $p=10$ where $k=3,4,\ldots,100$.}
	\label{fig:bound_fixedp}
\end{figure}

\subsection{Rank Deficient Least Squares}
In this section, we use the randomized LU 
to solve efficiently the Rank Deficient Least Squares (RDLS) problem. 
Assume that $A$ is an $m \times n$ matrix ($m \ge n$) with $\text{rank}(A)=k$, 
$k<n$ and $b$ is a column vector of size $m \times 1$. 
We want to minimize $\Vert Ax-b \Vert$. Because $A$ is a rank deficient matrix, then the problem 
has an infinite number of solutions. 
We show that the complexity of the solution depends on the rank of $A$ and that the problem is equivalent to 
solving the following two problems: a full rank Least Square (LS) problem of size $m \times k$ 
and a simplified undetermined linear system of 
equations that requires  a matrix inversion of size $k \times k$.
The solution is derived by the application of Algorithm \ref{alg:randomized_lu} to $A$ to get
\begin{equation}
\Vert Ax-b \Vert = \Vert P^T LU Q^T x-b \Vert = \Vert LUQ^Tx-Pb \Vert,
\end{equation}
where $L$ is an $m \times k$ matrix, $U$ is a $k \times n$ matrix and both $L$ and $U$ are of rank $k$. 
Let $y=UQ^Tx$ and $c=Pb$. Then, the problem is reformulated as $\min \Vert Ly-c \Vert$.
Note that $L$ is a full rank matrix and the problem to be solved becomes a standard full rank LS problem. 
The solution is given by $y=L^\dagger c$.
Next, we solve
\begin{equation}
\label{eq:rank_def_uzy}
Uz=y,
\end{equation}
where $z=Q^T x$. 
Since $U$ is a $k \times n$ matrix, Eq. \eqref{eq:rank_def_uzy}  is an underdetermined system. 
Assume that $U=\left[U_1 ~U_2\right]$ and $z=[z_1 ~z_2]^T$, 
where $U_1$ is a $k \times k$ matrix, $z_1$ is a $k \times 1$ vector and $z_2$ is a $(n-k) \times 1$ vector. 
Then, the solution is given by setting any value to $z_2$ and solving $U_1z_1=y-U_2 z_2.$
For simplicity, we choose $z_2=0$. Therefore, we get $z_1=U_1^{-1}y.$
The final solution is given by $x=Qz.$ 
This procedure is summarized in Algorithm \ref{alg:deficient_ls} 
that finds the solution to the deficient least squares problem that uses Algorithm \ref{alg:randomized_lu}.
\\

\begin{algorithm}[H]
\caption{Solving Rank Deficient Least Squares with Randomized LU}
\label{alg:deficient_ls}
\textbf{Input:} Matrix $A$ of size $m \times n$ with rank $k$, $l$,~$l\ge k$, $b$ vector of size $m \times 1$.\\ 
\textbf{Output:} Solution $x$ that minimizes $\Vert Ax-b \Vert$.
\begin{algorithmic}[1]
\STATE Apply Algorithm \ref{alg:randomized_lu} to $A$ with parameters $k$ and $l$.
\STATE $y \gets L^\dagger Pb$.
\STATE $z_1 \gets U_1^{-1} y$. 
\STATE $z \gets \binom{ z_1}{z_2 }$,  where $z_2 $ is an $n-k$ zero vector.
\STATE $x \gets Q z$.
\end{algorithmic}
\end{algorithm}
The complexity of Algorithm \ref{alg:deficient_ls} is equal to the randomized LU complexity (Algorithm \ref{alg:randomized_lu}) 
with an additional inversion cost of the matrix $U_1$ in Step 3, which is of size $k \times k$. 
Note that the solution given by Algorithm \ref{alg:deficient_ls} is sparse in the sense 
that $x$ contains at most $k$ non-zero entries.

\subsection{Fast Randomized LU}
\label{sec:fast_random_lu}
Algorithm \ref{alg:randomized_lu} describes
the randomized LU algorithm. This algorithm computes the LU
approximation of the matrix $A$  of rank $k$ whose computational
complexity is $\mathcal{C}_{RandLU}=\mathcal{O}(lmn + l^2 m + k^3 +
k^2 n)$ operations. We present now an asymptotic improvement to
Algorithm \ref{alg:randomized_lu} called fast randomized LU whose computational complexity is
\begin{equation}
\label{better_complexity}
\mathcal{C}_{FastRandLU}=\mathcal{O}(mn\log l + mkl + nkl + mk^2 +
k^3).
\end{equation}
In order to achieve it, we use the SRFT matrix and the ID Algorithm~\cite{low_rank_comp}, 
which were presented in sections \ref{subsec:srht} and \ref{subsec:id}, respectively.

The most computationally expensive procedures are steps 2 and 5 in
Algorithm \ref{alg:randomized_lu}. Step 2 involves matrix
multiplication with the matrix $A$ where  $A$ applied to a
random matrix.  Instead of projecting it with a Gaussian random matrix,
we use the SRFT matrix $R$. Due to the special structure $R=DFS$ (Section \ref{subsec:srht}), 
as was shown in Lemma \ref{lem:SRFTmult}, the
application of an $m \times n$ matrix $A$ to an $n \times l$
matrix $R$ necessitates $\mathcal{O}(nm\log l)$ floating point operations.

Instead of direct computation of $L_y^{\dagger}PA$ in step 5 in
Algorithm \ref{alg:randomized_lu}, $A$ is approximated by the ID of
$Y$, namely, if $Y=XY_{(J,:)}$ is the full rank ID of $Y$, then $A
\approx XA_{(J,:)}$.

\begin{algorithm}[H]
	\caption{Fast Randomized LU Decomposition}
	\label{alg:fast_randomized_lu}
	\textbf{Input:} Matrix $A$ of size $m \times n$ to decompose, $k$  desired rank, $l$ number of columns to use.\\
	\textbf{Output:} Matrices $P,Q,L,U$ such that $\Vert PAQ-LU \Vert \le \mathcal{O}(\sigma_{k+1}(A))$ where $P$ and $Q$
	are orthogonal permutation matrices, $L$ and $U$ are the lower and upper triangular matrices, respectively.
	\begin{algorithmic}[1]
		\STATE Create a random SRFT matrix $R$ of size $n \times l$ (Lemma \ref{lem:SRFTmult}).
		\STATE $Y \gets AR$.
		\STATE Apply RRLU decomposition to $Y$ such that $PYQ_y=L_yU_y$.
		\STATE Truncate $L_y$ and $U_y$ by choosing the first $k$ columns and the first $k$ rows, respectively, such that $L_y \leftarrow L_y(:,1:k)$ and $U_y \leftarrow U_y(1:k,:).$
		\STATE Compute the full rank ID decomposition of $Y$ such that $Y=XY_{(J,:)}$ (Section \ref{subsec:id}).
		\STATE $B \gets L_y^{\dagger}PXA_{(J,:)}$.
		\STATE Apply the LU decomposition to $B$ with column pivoting $BQ=L_bU_b$.
		\STATE $L \gets L_y L_b$.
		\STATE $U \gets U_b$.
	\end{algorithmic}
\end{algorithm}

\subsubsection{Computational complexity}
To compute the number of floating points operations in Algorithm \ref{alg:fast_randomized_lu}, we evaluate the complexity of each step:
\begin{description}
	\item[Step 1:] The multiplication of an $m \times n$ matrix $A$ by an $n \times l$ matrix $R$ requires $\mathcal{O}(nm\log l)$
	operations;
	\item[Step 2:] The computation of the RRLU decomposition of an $m \times l$ Y requires
	$\mathcal{O}(ml^2)$ operations;
	\item[Step 3:] The truncation of $L_y$ and $U_y$ requires $\mathcal{O}(1)$ operations;
	\item[Step 4:] The computation of the ID decomposition of $Y$ requires
	$\mathcal{O}(ml^2)$ operations;
	\item[Step 5:] The computation of the pseudo inverse $L_y^{\dagger}$ requires
	$\mathcal{O}(k^2m+k^3)$ operations;
	\item [Step 6:] The multiplication of $L_y^{\dagger}PXA_{(J,:)}$ requires $\mathcal{O}(mkl +
	nkl)$ operations;
	\item[Step 7:] The computation of the partial pivoting LU of matrix $B$ requires
	$\mathcal{O}(nk^2)$ operations;
	\item[Step 8:] The computation of $m \times k$ matrix $L$ requires $\mathcal{O}(mk^2)$ operations.
\end{description}

The total computational complexity of Algorithm \ref{alg:fast_randomized_lu} is
$\mathcal{O}(mn\log l + mkl + nkl + mk^2 + k^3)$. By simplifying
this expression while assuming that $k$ and $l$ are of the same
magnitude, we get that the total computational complexity of Algorithm \ref{alg:fast_randomized_lu} is
$\mathcal{O}(mn\log k + (m+n)k^2 + k^3)$.

\subsubsection{Correctness Algorithm \ref{alg:fast_randomized_lu}}
We now prove that Algorithm \ref{alg:fast_randomized_lu} 
approximates the LU decomposition and provide an error
bound.
\begin{theorem}
	\label{trm:fastRand_lu_err} Given a matrix $A$ of size $m\times n.$
	Its fast randomized LU decomposition in Algorithm
	\ref{alg:fast_randomized_lu} with integers $k$ and $l$  (where $n,m \geq l
	\ge k$ sufficiently large)  satisfies
	
	\begin{equation*}
		\begin{array} {llll}
			\Vert LU-PAQ \Vert  & \le && \left( \left[ 1 +\sqrt{1 + 4k(n - k)}\right]\sqrt{1+7n/l}\right)\sigma_{k+1}(A) \\
			&       &+& 2 \left( \sqrt{\alpha   n + 1}+ \sqrt{\frac{\alpha }{l}} (k(n-k)+1) \right)\sigma_{k+1}(A)
		\end{array}
	\end{equation*}
	with probability not less than $1-3\frac{1}{\beta k}$ where
	$\beta>1$ is a constant.
\end{theorem}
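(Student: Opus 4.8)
The plan is to follow the proof of Theorem~\ref{trm:error_bound2} almost verbatim, treating the interpolative step of Algorithm~\ref{alg:fast_randomized_lu} as a controlled perturbation of the exact product used in Algorithm~\ref{alg:randomized_lu}. First I would unwind the algorithm as before: Steps~7--9 give $L_bU_b=BQ=L_y^\dagger PXA_{(J,:)}Q$, $L=L_yL_b$ and $U=U_b$, so that $LU=L_yL_y^\dagger PXA_{(J,:)}Q$ and therefore $\Vert LU-PAQ\Vert=\Vert L_yL_y^\dagger PXA_{(J,:)}Q-PAQ\Vert$. Inserting $L_yL_y^\dagger PAQ$, applying the triangle inequality, and using that $L_yL_y^\dagger$ is the orthogonal projector onto the range of the full-rank $L_y$ (so $\Vert L_yL_y^\dagger\Vert\le 1$) together with the orthogonality of $P,Q$, splits the error into
\begin{equation*}
\Vert LU-PAQ\Vert\le\Vert A-XA_{(J,:)}\Vert+\Vert(I-L_yL_y^\dagger)PAQ\Vert.
\end{equation*}
I expect the second summand to produce the second line of the claimed bound and the first (the randomized ID error) to produce the first line.

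For the projection term $\Vert(I-L_yL_y^\dagger)PAQ\Vert$ I would rerun the argument of Theorem~\ref{trm:error_bound2} with the SRFT matrix $R$ in the role of the Gaussian $G$ and with $L_y,U_y$ the truncated RRLU factors of $Y=AR$. Lemma~\ref{lem:rand_lu_bound1} reduces the task to bounding $\Vert ARF-A\Vert$, $\Vert F\Vert$, and $\Vert L_yU_y-PARQ_y\Vert$; the last is handled exactly as in Theorem~\ref{trm:rand_lu_err} by Lemmas~\ref{lem:rrlu_approx_err} and~\ref{lem:bhatia}, giving $(k(n-k)+1)\Vert R\Vert\,\sigma_{k+1}(A)$, where $\Vert R\Vert$ is controlled using the unitarity of the $DF$ factor and the single nonzero per column of $S$. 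The existence of a near right inverse $F$ with $\Vert ARF-A\Vert\le\sqrt{\alpha n+1}\,\sigma_{k+1}(A)$ and $\Vert F\Vert$ small enough that $\Vert F\Vert\Vert R\Vert\le\sqrt{\alpha/l}$ is the SRFT analogue of Lemma~\ref{lem:rand_gauss2}; it is what fixes the constant $\alpha$, and assembled through Lemma~\ref{lem:rand_lu_bound1} it reproduces $2\bigl(\sqrt{\alpha n+1}+\sqrt{\alpha/l}\,(k(n-k)+1)\bigr)\sigma_{k+1}(A)$.

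For the interpolative term $\Vert A-XA_{(J,:)}\Vert$ I would invoke the properties of the ID from Section~\ref{subsec:id}: since $X$ contains a $k\times k$ identity block and has all entries bounded by $2$, one gets $\Vert X\Vert\le\sqrt{1+4k(n-k)}$. Transposing, the row-ID of $Y=AR$ transferred to $A$ is precisely the randomized column-ID of $A^T$ analysed in~\cite{randecomp,halko2011finding}, whose error separates, via a triangle inequality, into $(1+\Vert X\Vert)$ times the range-capture error of the SRFT sketch. Bounding the latter by the structured-transform estimate of~\cite{halko2011finding}, namely $\sqrt{1+7n/l}\,\sigma_{k+1}(A)$ (valid once $l$ is sufficiently large), yields the first line $\bigl[1+\sqrt{1+4k(n-k)}\bigr]\sqrt{1+7n/l}\,\sigma_{k+1}(A)$.

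Finally I would add the two lines and take a union bound over the three failure events -- the SRFT range-capture estimate, the deterministic norm bound on $R$, and the RRLU/ID factorization -- which produces the stated success probability $1-3/(\beta k)$. The main obstacle is exactly the passage from Gaussian to SRFT randomness: Lemmas~\ref{lem:rand_gauss} and~\ref{lem:rand_gauss2} and the subgaussian machinery of Section~\ref{sec:sparse} no longer apply, so both the existence of the near inverse $F$ and the range-capture factor $\sqrt{1+7n/l}$ must be re-derived from structured-matrix concentration results, and it is there that the exponentially small failure probabilities of the earlier theorems degrade to the polynomially small failure probability $3/(\beta k)$ appearing in the statement.
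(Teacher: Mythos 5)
Your proposal follows essentially the same route as the paper's proof: the same splitting of $\Vert LU-PAQ\Vert$ into the interpolative error $\Vert A-XA_{(J,:)}\Vert$, bounded by $(1+\Vert X\Vert)$ times the SRFT range-capture bound $\sqrt{1+7n/l}\,\sigma_{k+1}(A)$ (the paper's Lemma \ref{lem:ID_approx} via Lemma \ref{lem:SRFTerrbnd}), plus the projection error $\Vert L_yL_y^\dagger PAQ-PAQ\Vert$, handled through Lemma \ref{lem:rand_lu_bound1} with the SRFT near right inverse of \cite{WLRT} (the paper's Lemmas \ref{lem:srftLUbound} and \ref{lem:ARTminAbound}), followed by the same union bound. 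The only cosmetic deviations are your transposition framing of the ID step and your bounding of $\Vert F\Vert\Vert R\Vert$, where the paper simply uses $\Vert R\Vert=1$ from the orthonormality of the SRFT columns; neither changes the argument.
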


The proof of Theorem \ref{trm:fastRand_lu_err} uses Lemmas
\ref{lem:SRFTerrbnd}-\ref{lem:ARTminAbound}.

\begin{lemma} \label{lem:SRFTerrbnd}
	Let $A$ be an $m \times n$ matrix with singular values $\sigma_1 \geq \sigma_2 \geq \ldots \geq \sigma_{\min(m,n)}$. Let $k$ and $l$ be integers
	such that $4\left [ \sqrt{k} + \sqrt{8\ln(kn)} \right ] ^2 \ln k \leq l \leq n.$
	Let $R$ be an $ n \times l$ SRFT matrix and $Y=AR$. Denote by $Q$ the $m \times l$ matrix whose columns form an orthonormal
	basis for the range of $Y$.
	Then, with a failure probability of at most $3k^{-1}$ we have
	$$
	\|A-QQ^*A\| \leq \sqrt{1+7n/l} \sigma_{k+1}.
	$$
\end{lemma}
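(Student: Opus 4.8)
The plan is to follow the standard randomized range-finder analysis specialized to the SRFT, in the spirit of \cite{halko2011finding,WLRT}. First I would reduce the spectral-norm error to a purely deterministic estimate. Write the SVD $A = U\Sigma V^*$ and split it after the $k$-th singular value, so that $\Sigma = \mathrm{diag}(\Sigma_1,\Sigma_2)$ with $\Sigma_1 = \mathrm{diag}(\sigma_1,\dots,\sigma_k)$ and $\Sigma_2$ holding the tail, and $V=[V_1\ V_2]$ accordingly. Setting $\Omega_1 = V_1^* R$ and $\Omega_2 = V_2^* R$, the deterministic projector bound of \cite{halko2011finding} (Theorem 9.1) gives, whenever $\Omega_1$ has full row rank,
\[
\Vert A - QQ^*A \Vert \le \Vert \Sigma_2 \Vert \sqrt{1 + \Vert \Omega_2 \Vert^2 \, \Vert \Omega_1^{\dagger} \Vert^2},
\qquad \Vert \Sigma_2 \Vert = \sigma_{k+1}.
\]
Since the range of $Y=AR$ and hence the projector $QQ^*$ are invariant under rescaling $R$, I may use the unnormalized $R=DFS$ directly. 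It then suffices to show that, on an event of probability at least $1-3k^{-1}$, one has $\Vert \Omega_2 \Vert^2 \, \Vert \Omega_1^{\dagger} \Vert^2 \le 7n/l$. The factor $\Vert \Omega_2 \Vert$ is immediate: $\Omega_2 = V_2^* D F S$, and since $DF$ is unitary and $S$ merely restricts to $l$ coordinates, $\Vert \Omega_2 \Vert \le \Vert V_2^* DF \Vert \le 1$.

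The crux is the lower bound on $\sigma_{\min}(\Omega_1)$, equivalently the upper bound on $\Vert \Omega_1^{\dagger} \Vert$, and this is exactly where the SRFT structure and the hypothesis on $l$ enter. I would proceed in two sub-steps. \textbf{Flattening:} the matrix $FDV_1$ is $n \times k$ with orthonormal columns, and the random phases in $D$ together with the Fourier mixing $F$ equidistribute the row energy, so with high probability its coherence (maximum squared row norm, scaled by $n$) is $\mathcal{O}\!\left((\sqrt{k}+\sqrt{\ln(kn)})^2\right)$; this is the concentration estimate for randomized trigonometric transforms. \textbf{Subsampling:} given such small coherence, selecting $l$ rows (the action of $S^*$) produces $\Omega_1^* = S^* F D V_1$ whose smallest singular value is at least a constant multiple of $\sqrt{l/n}$, by a matrix Chernoff / column-sampling bound, provided $l$ exceeds the coherence times $\ln k$ — which is precisely the assumption $4[\sqrt{k}+\sqrt{8\ln(kn)}]^2 \ln k \le l$. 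This delivers $\Vert \Omega_1^{\dagger} \Vert^2 \le C n/l$ on an event of probability at least $1-3k^{-1}$. Combining with $\Vert \Omega_2 \Vert \le 1$ and tracking the numerical constants through the sampling bound yields $\Vert \Omega_2 \Vert^2 \, \Vert \Omega_1^{\dagger} \Vert^2 \le 7n/l$, and substitution into the deterministic inequality above gives $\Vert A-QQ^*A \Vert \le \sqrt{1+7n/l}\,\sigma_{k+1}$.

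The hard part will be the two random-matrix facts driving the second paragraph: the flattening estimate and the subsampling conditioning bound. These constitute the genuine technical core of the SRFT analysis and are what pin down both the sample-complexity threshold on $l$ and the explicit constant $7$; the deterministic reduction and the final arithmetic are routine. Since this statement coincides essentially with Theorem 11.2 of \cite{halko2011finding}, I would invoke the required concentration inequality and matrix Chernoff bound directly from \cite{halko2011finding} and \cite{WLRT} rather than reprove them, so that the proof reduces to assembling the deterministic bound with these two cited probabilistic estimates.
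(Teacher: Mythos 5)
Your proposal is correct, and it coincides with the paper's own treatment in the only sense available: the paper gives no proof of this lemma at all, but simply remarks that it appears, in slightly different formulation, as Theorem 11.2 of \cite{halko2011finding} and as Theorem 3.1 of \cite{tropp2011improved}. Your sketch is a faithful reconstruction of the proof behind that citation---the deterministic projector bound, the estimate $\Vert \Omega_2 \Vert \le 1$ (equivalently $\sqrt{n/l}$ under the normalized SRFT, which your scaling-invariance remark handles), and the flattening-plus-matrix-Chernoff lower bound on $\sigma_{\min}(\Omega_1)$ that produces the constant $7$ and the threshold on $l$---and since you defer the two probabilistic ingredients to \cite{halko2011finding} and \cite{WLRT}, your argument ultimately rests on the same sources the paper invokes.
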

Lemma \ref{lem:SRFTerrbnd} appears in \cite{halko2011finding} as Theorem 11.2 and in a
\cite{tropp2011improved} as Theorem 3.1 in slightly different formulation.

\begin{lemma}
	\label{lem:ID_approx} Let $A$ be an $m  \times n$ matrix,  $R$ is
	an $n \times l$ SRFT random matrix and  $Y = XY_{(J,:)}$ is the
	full rank ID of $Y = AR$. Then,
	$$
	\Vert  A - XA_{(J,:)} \Vert \leq \left( 1 +\sqrt{1 + 4k(n - k)}\right)\sqrt{1+7n/l} \sigma_{k+1}(A)
	$$
	with failure probability of at most $ 3k^{-1}$
	when  $l \geq 4 \left(\sqrt{k} + \sqrt{8\ln(kn)}\right)^2\ln k$.
\end{lemma}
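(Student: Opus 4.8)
The plan is to transfer the orthogonal-projection error bound of Lemma \ref{lem:SRFTerrbnd} to the interpolative decomposition, exploiting the fact that the ID of $Y$ also reconstructs an orthonormal basis of its range. First I would let $Q$ denote the $m \times l$ matrix whose columns form an orthonormal basis for the range of $Y = AR$, exactly as in Lemma \ref{lem:SRFTerrbnd}. Since the hypothesis $l \ge 4[\sqrt{k} + \sqrt{8\ln(kn)}]^2\ln k$ is precisely the one required there, that lemma applies and gives $\Vert A - QQ^*A \Vert \le \sqrt{1+7n/l}\,\sigma_{k+1}(A)$ with failure probability at most $3k^{-1}$. The ID construction is deterministic once $Y$ is fixed, so this is the only source of randomness and hence the only contribution to the failure probability.

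The key algebraic step is to show that the same row set $J$ and interpolation matrix $X$ produced by the full-rank ID $Y = X Y_{(J,:)}$ also reproduce $Q$, that is, $Q = X Q_{(J,:)}$. Because $Q$ and $Y$ span the same column space and $Y$ has full column rank $l$, I can write $Y = QM$ with $M = Q^*Y$ an invertible $l \times l$ matrix; restricting to the rows indexed by $J$ gives $Y_{(J,:)} = Q_{(J,:)}M$, and substituting into $Y = X Y_{(J,:)}$ yields $QM = X Q_{(J,:)} M$. Cancelling the invertible $M$ on the right produces $Q = X Q_{(J,:)}$.

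With this identity in hand I would split the error by the triangle inequality,
\[
\Vert A - X A_{(J,:)} \Vert \le \Vert A - QQ^*A \Vert + \Vert QQ^*A - X A_{(J,:)} \Vert ,
\]
and rewrite the second term. Using $Q = X Q_{(J,:)}$ and that selecting rows commutes with left-multiplication, $QQ^*A = X Q_{(J,:)} Q^* A = X (QQ^*A)_{(J,:)}$, whence
\[
QQ^*A - X A_{(J,:)} = X\bigl[(QQ^*A)_{(J,:)} - A_{(J,:)}\bigr] = X (QQ^*A - A)_{(J,:)} .
\]
Restricting to a subset of rows can only decrease the spectral norm, so $\Vert (QQ^*A - A)_{(J,:)} \Vert \le \Vert QQ^*A - A \Vert$, and therefore $\Vert QQ^*A - X A_{(J,:)} \Vert \le \Vert X \Vert\, \Vert A - QQ^*A \Vert$. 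Combining the two terms gives $\Vert A - X A_{(J,:)} \Vert \le (1 + \Vert X \Vert)\,\Vert A - QQ^*A \Vert$.

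Finally I would bound $\Vert X \Vert$. By the ID construction of Section \ref{subsec:id}, which rests on the strong rank-revealing factorization of \cite{gu1996efficient}, the entries of $X$ are bounded in magnitude by $2$ and $X$ reduces to the identity on the rows indexed by $J$; the strong RRQR estimate then gives $\Vert X \Vert \le \sqrt{1 + 4k(n-k)}$. Substituting this together with the bound from Lemma \ref{lem:SRFTerrbnd} yields the claimed estimate, with failure probability inherited as $3k^{-1}$. I expect the main obstacle to be the second paragraph: establishing $Q = X Q_{(J,:)}$ cleanly and justifying the cancellation of $M$, which rests on $Y$ having full column rank so that the ID is exact and $M$ invertible. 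The remaining pieces — the triangle-inequality split, the elementary monotonicity of the spectral norm under row restriction, and the ID norm bound on $\Vert X \Vert$ (where the precise constant $\sqrt{1+4k(n-k)}$ enters) — are routine once that identity is in place.
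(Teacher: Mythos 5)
Your proposal is correct and follows essentially the same route as the paper's proof: apply Lemma \ref{lem:SRFTerrbnd} to get the orthogonal-projection bound, use the identity $Q = XQ_{(J,:)}$ to obtain $\Vert A - XA_{(J,:)}\Vert \le (1+\Vert X \Vert)\Vert A - QQ^*A\Vert$, and bound $\Vert X \Vert$ by $\sqrt{1+4k(n-k)}$ from the ID structure. In fact you are slightly more careful than the paper, which asserts $\hat{A} = XQ_{(J,:)}Q^*A$ without justification, whereas you derive $Q = XQ_{(J,:)}$ explicitly via the invertible factor $M = Q^*Y$.
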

The proof is the same as in Lemma 5.1 in \cite{halko2011finding}.
\begin{proof}
	Denote by $Q$ the matrix whose columns form an orthonormal basis for
	the range of Y. By using Lemma \ref{lem:SRFTerrbnd} we have
	\begin{equation}\label{eq:AQQA}
		\Vert A-QQ^*A \Vert \leq \sqrt{1+7n/l} \sigma_{k+1}(A)
	\end{equation}
	except with probability $3k^{-1}$. 
	
	Denote $\hat{A}=QQ^*A$. Since $\hat{A} = XQ_{(J,:)}Q^*A$ and $X_{(J,:)}= I$, we have $\hat{A}_{(J,:)} = Q_{(J,:)}Q^*A$. Thus, $\hat{A}=X\hat{A}_{(J,:)}$.

	\begin{equation*}
	\begin{array}{lll}
		\|  A - XA_{(J,:)} \| &=& \| A - X\hat{A}_{(J,:)} + X\hat{A}_{(J,:)} - XA_{(J,:)} \| \\
		& \leq & \|A-\hat{A}\| + \|X\hat{A}_{(J,:)} - XA_{(J,:)} \| \\
		&   =  & \|A-\hat{A}\| + \|X\|\|\hat{A}_{(J,:)} - A_{(J,:)} \| \\
		& \leq & (1+\|X\|) \|A-\hat{A}\|.
	\end{array}
	\end{equation*}
	By using Eq. \eqref{eq:AQQA} we have
	$$
	\|  A - XA_{(J,:)} \| \leq (1+\|X\|) \sqrt{1+7n/l} \sigma_{k+1}(A).
	$$
	The proof is completed since $X$ contains a $k \times k$ identity matrix and the spectral norm of the remaining $(n-k) \times k$ submatrix is bounded by 2.

\end{proof}

\begin{lemma}[Appears in \cite{WLRT} as Lemma 4.6]
	\label{lem:ARTminAbound} Suppose that $k,l,n $ and $m$ are positive
	integers with $k \leq l$ such that $l< \min{(m,n)}$. Suppose 
	that $\alpha$ and $\beta$ are real numbers greater than 1 such that
	$$
	m>l \geq \frac{\alpha^2\beta}{(\alpha - 1)^2}k^2.
	$$
	Suppose that $A$ is an $m \times n$ complex matrix and $Q$ is the $n
	\times l$ SRFT matrix. Then, there exists an $l \times n$ complex
	matrix F such that $ \Vert AQF -A \Vert \leq \sqrt{\alpha n + 1}
	\sigma_{k+1} $ and $ \Vert F \Vert \leq \sqrt{\frac{\alpha}{l}} $
	with probability at least $1-\frac{1}{\beta}$ where $\sigma_{k+1}$
	is the $(k+1)$th greatest singular value of $A$.
\end{lemma}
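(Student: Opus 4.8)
The statement is the SRFT counterpart of the Gaussian Lemma \ref{lem:rand_gauss2}, so my plan is to reuse the same SVD-partition template and replace only the Gaussian singular-value estimates by their subsampled-Fourier analogues. First I would write the complex SVD $A=U\Sigma V^*$ with $U$ ($m\times m$) and $V$ ($n\times n$) unitary and $\Sigma$ diagonal, and split the action of the SRFT matrix $Q$ (with structure $Q=DFS$ as recalled in Section \ref{subsec:srht}) on the right singular vectors as
\begin{equation*}
V^*Q=\begin{pmatrix} H \\ R \end{pmatrix},
\end{equation*}
where $H$ is the top $k\times l$ block and $R$ is the bottom $(n-k)\times l$ block. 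Since $V$ is unitary, $V^*Q$ inherits the mixing of the SRFT: the random phases of $D$ followed by the Fourier transform $F$ flatten the rows of any fixed unitary before the column subsampling $S$ acts, which is precisely what will let me control $H$ and $R$ probabilistically. I would then take the natural interpolation choice $F=\begin{pmatrix} H^\dagger & 0 \end{pmatrix}V^*$.

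Next I would run exactly the deterministic block reduction used in Lemma \ref{lem:rand_gauss2}. Writing $\Sigma=\mathrm{diag}(S_0,T)$ with $S_0$ the leading $k\times k$ block and $T=\mathrm{diag}(\sigma_{k+1},\dots)$, a block computation gives
\begin{equation*}
\Sigma\left(\begin{pmatrix} H \\ R \end{pmatrix}\begin{pmatrix} H^\dagger & 0 \end{pmatrix}-I\right)=\begin{pmatrix} 0 & 0 \\ TRH^\dagger & -T \end{pmatrix},
\end{equation*}
so that, using unitarity of $U$ and $V^*$ together with $\Vert T\Vert=\sigma_{k+1}$,
\begin{equation*}
\Vert AQF-A\Vert\le\sqrt{\Vert TRH^\dagger\Vert^2+\Vert T\Vert^2}\le\sqrt{\Vert H^\dagger\Vert^2\Vert R\Vert^2+1}\,\sigma_{k+1}.
\end{equation*}
Simultaneously $\Vert F\Vert=\Vert H^\dagger\Vert=1/\sigma_k(H)$. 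Thus the whole lemma reduces to two high-probability estimates on the subsampled Fourier image of the singular-vector frame: a lower bound $\sigma_k(H)\ge\sqrt{l/\alpha}$, which yields $\Vert F\Vert\le\sqrt{\alpha/l}$, and a companion bound $\Vert H^\dagger\Vert\,\Vert R\Vert\le\sqrt{\alpha n}$, which yields the $\sqrt{\alpha n+1}$ factor.

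The hard part is the conditioning of $H=V_1^*DFS$, i.e. controlling the singular values of the SRFT restricted to the fixed $k$-dimensional subspace spanned by the leading right singular vectors $V_1$. I would use the standard two-step SRFT argument. First, show that the random phases $D$ followed by the unitary $F$ render the rows of $V_1^*DF$ incoherent with high probability; the randomness of $D$ is essential, converting the fixed and possibly spiky frame $V_1^*$ into one that is safe to subsample. Second, show that uniform column subsampling by $S$ then acts as an approximate isometry on the $k$-dimensional range, so that all of $\sigma_1(H),\dots,\sigma_k(H)$ lie within a factor close to $1$ of their common nominal value. The quantitative engine is a second-moment estimate followed by Markov's inequality: the relevant deviation is a Frobenius-type quantity involving a double sum over the $k$ rows and therefore scales like $k^2/l$ once the subsampling variance is accounted for. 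This is exactly why the hypothesis demands $l\ge\frac{\alpha^2\beta}{(\alpha-1)^2}k^2$ — taking $l$ of order $k^2$ (rather than the sharper $k\log k$ one could extract from matrix concentration) forces the second moment below the threshold at which Markov yields a multiplicative distortion no worse than the factor $(\alpha-1)/\alpha$, with total failure probability at most $1/\beta$. The bound on $\Vert R\Vert$ comes from the same event via $\Vert R\Vert\le\Vert V^*Q\Vert=\Vert Q\Vert$, since $V_2$ also has orthonormal rows.

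Finally I would intersect the two estimates and substitute into the deterministic bound, obtaining $\Vert AQF-A\Vert\le\sqrt{\alpha n+1}\,\sigma_{k+1}$ and $\Vert F\Vert\le\sqrt{\alpha/l}$ on an event of probability at least $1-\tfrac{1}{\beta}$. The only genuinely difficult ingredient is the SRFT conditioning estimate (the flattening-plus-subsampling concentration of \cite{WLRT}); everything else is the same deterministic block manipulation already used for the Gaussian case in Lemma \ref{lem:rand_gauss2}.
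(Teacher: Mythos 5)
The paper does not actually prove this lemma---it is imported verbatim from \cite{WLRT} (their Lemma 4.6) and used as a black box---so there is no in-paper proof to compare against. Your reconstruction does follow the genuine route: the SVD split $V^*Q = \binom{H}{R}$, the choice $F = \begin{pmatrix} H^\dagger & 0\end{pmatrix}V^*$, and the deterministic reduction to $\sqrt{\Vert H^\dagger\Vert^2\Vert R\Vert^2+1}\,\sigma_{k+1}$ are exactly the template the paper itself executes for the Gaussian case in Lemma \ref{lem:rand_gauss2}, and they are also the skeleton of the proof in \cite{WLRT}, whose key probabilistic engine (random phases flatten the frame, then uniform subsampling is controlled by a second-moment-plus-Markov estimate, whence the $l\gtrsim k^2$ hypothesis and the $1-1/\beta$ success probability) you identify correctly, though you defer it to \cite{WLRT} rather than prove it---which is no worse than what the paper does with the whole lemma.

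There is, however, one concrete inconsistency in your probabilistic bookkeeping: your two intermediate estimates live at different normalizations of the SRFT, and under the definition given in this paper's Section \ref{subsec:srht} one of them is impossible. With $F_{jk}=n^{-1/2}e^{-2\pi i(j-1)(k-1)/n}$ unitary, the matrix $G=V_1^*DF$ has orthonormal rows, so $\mathbb{E}[HH^*]=\frac{l}{n}I_k$ and indeed $\sigma_k(H)\le\Vert H\Vert\le\Vert S\Vert\le 1$ deterministically; your claimed bound $\sigma_k(H)\ge\sqrt{l/\alpha}$ exceeds $1$ whenever $l>\alpha$, which the hypothesis $l\ge\frac{\alpha^2\beta}{(\alpha-1)^2}k^2$ forces. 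The stated constants $\Vert F\Vert\le\sqrt{\alpha/l}$ and $\sqrt{\alpha n+1}$ decode the \emph{unnormalized} DFT convention of \cite{WLRT} (entries of modulus $1$), under which $\mathbb{E}[HH^*]=l\,I_k$, so $\sigma_k(H)\ge\sqrt{l/\alpha}$ is the right-scale event, and the companion bound must then be $\Vert R\Vert\le\Vert FS\Vert\le\Vert FS\Vert_F=\sqrt{nl}$ (each selected Fourier column has norm $\sqrt{n}$; the Frobenius bound is also robust to repeated column draws), giving $\Vert H^\dagger\Vert^2\Vert R\Vert^2\le\frac{\alpha}{l}\cdot nl=\alpha n$ and hence exactly $\sqrt{\alpha n+1}$. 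Your appeal to $\Vert R\Vert\le\Vert V^*Q\Vert=\Vert Q\Vert$ yields $\Vert Q\Vert\le 1$ in the normalized convention and is never quantified in the unnormalized one, so as written the final arithmetic does not close; the fix is simply to commit to one convention and track the factor $\sqrt{n}$ through both estimates (note also that only the lower bound on $\sigma_k(H)$ is needed---the approximate-isometry upper bound you invoke plays no role, and since the $\Vert R\Vert$ bound can be made deterministic, no union bound is required to keep the failure probability at $1/\beta$).
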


\begin{lemma} \label{lem:srftLUbound}
	Let $A,P,Q, L_y$ and $ L_y^{\dagger}$ be as in Algorithm
	\ref{alg:fast_randomized_lu}, then
	$$
	\Vert  L_y L_y^{\dagger} PAQ - PAQ \Vert \le  2 \left( \sqrt{\alpha   n + 1}+ \sqrt{\frac{\alpha }{l}} (k(n-k)+1) \right)\sigma_{k+1}(A)
	$$
	with probability of at least $1-\frac{1}{\beta}$ where $m>l \geq
	\frac{\alpha^2\beta}{(\alpha - 1)^2}k^2$.
\end{lemma}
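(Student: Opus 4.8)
The plan is to reproduce the structure of the proof of Theorem \ref{trm:rand_lu_err}, but with the Gaussian estimates replaced by their SRFT analogues. The starting point is Lemma \ref{lem:rand_lu_bound1}, applied with $PAQ$ in the role of $A$ and $L_y$ in the role of the basis $L$. For \emph{any} $n\times l$ matrix $\tilde R$ and $l\times n$ matrix $F$ it gives
\begin{equation*}
\Vert L_y L_y^{\dagger} PAQ - PAQ \Vert \le 2\Vert PAQ\tilde R F - PAQ \Vert + 2\Vert F \Vert\,\Vert L_y U_y - PAQ\tilde R\Vert .
\end{equation*}
I would then fix $\tilde R$ by the relation $Q\tilde R = RQ_y$, where $R$ is the SRFT matrix of Step 1 and $Q_y$ is the column permutation produced by the RRLU of $Y$ in Step 3; equivalently $\tilde R = Q^{T}RQ_y$. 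This is the choice that turns both terms into controllable quantities, since $PAQ\tilde R = PARQ_y = PYQ_y$.

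For the second term, $\Vert L_y U_y - PAQ\tilde R\Vert = \Vert L_y U_y - PYQ_y\Vert$ is exactly the RRLU$_k$ approximation error of $Y$ (up to the permutations $P,Q_y$), which $L_y,U_y$ realize after the truncation in Step 4. Lemma \ref{lem:rrlu_approx_err} therefore bounds it by $(k(l-k)+1)\sigma_{k+1}(Y)\le (k(n-k)+1)\sigma_{k+1}(Y)$, using $l\le n$. To pass from $Y$ back to $A$ I would invoke Lemma \ref{lem:bhatia}: $\sigma_{k+1}(Y)=\sigma_{k+1}(AR)\le \Vert R\Vert\,\sigma_{k+1}(A)$. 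Because $R=DFS$ is the product of the two unitary factors $D,F$ and a column-sampling matrix $S$ with orthonormal columns, $\Vert R\Vert = 1$, so this term is at most $(k(n-k)+1)\sigma_{k+1}(A)$.

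For the first term I would use Lemma \ref{lem:ARTminAbound}. The one point requiring care is that $\tilde R$ is a \emph{permuted} SRFT matrix, not an SRFT matrix, so the lemma does not apply to $\tilde R$ directly; I would absorb the permutations into the free matrix $F$ instead. Since $P,Q,Q_y$ are orthogonal and the spectral norm is invariant under them, $\Vert PAQ\tilde R F - PAQ\Vert = \Vert AR(Q_yFQ^{T}) - A\Vert$. Lemma \ref{lem:ARTminAbound} supplies an $l\times n$ matrix $\hat F$ with $\Vert AR\hat F - A\Vert \le \sqrt{\alpha n+1}\,\sigma_{k+1}(A)$ and $\Vert \hat F\Vert\le\sqrt{\alpha/l}$ (with probability at least $1-\frac{1}{\beta}$ under the hypothesis $m>l\ge\frac{\alpha^2\beta}{(\alpha-1)^2}k^2$); choosing $F=Q_y^{T}\hat F Q$ makes $Q_yFQ^{T}=\hat F$, so the first term is at most $\sqrt{\alpha n+1}\,\sigma_{k+1}(A)$ while $\Vert F\Vert=\Vert\hat F\Vert\le\sqrt{\alpha/l}$ by orthogonal invariance. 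Substituting the two bounds yields
\begin{equation*}
\Vert L_y L_y^{\dagger} PAQ - PAQ \Vert \le 2\left(\sqrt{\alpha n+1}+\sqrt{\frac{\alpha}{l}}\,(k(n-k)+1)\right)\sigma_{k+1}(A),
\end{equation*}
and the failure probability is inherited entirely from Lemma \ref{lem:ARTminAbound}, since the RRLU estimate and $\Vert R\Vert=1$ are deterministic.

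The main obstacle is the bookkeeping with the three permutations $P,Q,Q_y$: the natural $\tilde R$ with $PAQ\tilde R = PYQ_y$ is not itself SRFT, and the argument only closes because the spectral norm is unchanged by these orthogonal factors and because $F$ is a free matrix that can absorb them, letting Lemma \ref{lem:ARTminAbound} be applied to the genuine SRFT projection $AR$. Everything else is a direct substitution into the two-term bound of Lemma \ref{lem:rand_lu_bound1}.
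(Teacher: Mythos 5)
Your proposal is correct and follows essentially the same route as the paper's proof: the same two-term bound from Lemma \ref{lem:rand_lu_bound1} with $\tilde{R}$ chosen via $Q\tilde{R}=RQ_y$, the RRLU error (Lemma \ref{lem:rrlu_approx_err}) combined with Lemma \ref{lem:bhatia} and $\Vert R\Vert=1$ for the second term, and Lemma \ref{lem:ARTminAbound} for the first term. Your explicit absorption of the permutations $P,Q,Q_y$ into the free matrix $F$ is a more careful treatment of a step the paper handles loosely (``after row and column permutations $R$ is in fact $\tilde{G}$''), but it is a refinement of the same argument rather than a different one.
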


\begin{proof}
	By applying Lemma \ref{lem:rand_lu_bound1} we get
	\begin{equation}
	\label{ineq1}
	\Vert L_yL_y^\dagger PAQ-PAQ \Vert  \le  2\Vert PAQ\tilde{G}F-PAQ\Vert+2\Vert         F \Vert \Vert L_yU_y-PAQ\tilde{G}\Vert.
	\end{equation}
	$U_y$ is the $k \times n$ matrix in step 4 in Algorithm
	\ref{alg:fast_randomized_lu}. This holds for any matrix $\tilde{G}$.
	In particular, for a matrix $\tilde{G}$ satisfies
	$Q\tilde{G}=RQ_y$, where $R$ is the SRFT matrix in Algorithm
	\ref{alg:fast_randomized_lu}. Therefore, the last term in Eq.
	\ref{ineq1} can be reformulated by $\Vert
	L_yU_y-PAQ\tilde{G}\Vert=\Vert L_yU_y-PARQ_y\Vert$. By applying
	Lemmas \ref{lem:rrlu_approx_err} and \ref{lem:bhatia} to $\Vert
	L_yU_y-PAQ\tilde{G}\Vert$ we get
	\begin{equation}
	\begin{array}{lll}
	\Vert L_yU_y-PAQ\tilde{G}\Vert & = & \Vert L_yU_y-PARQ_y\Vert \\\\
	& \le & (k(n-k)+1)\sigma_{k+1}(AR) \\\\
	& \le & (k(n-k)+1)\Vert R \Vert \sigma_{k+1}(A).
	\end{array}
	\end{equation}
	Since $R$ is an SRFT matrix, it is orthogonal, thus $\Vert R
	\Vert = 1$. Lemma \ref{lem:ARTminAbound} proves that $\Vert
	PAQ\tilde{G}F-PAQ \Vert \le  \sqrt{\alpha n + 1}\sigma_{k+1}(A)$ and
	$\Vert F \Vert \leq \sqrt{\frac{\alpha }{l}} $. By summing up, we
	get
	\begin{equation*}
		\begin{array}{lll}
			\Vert L_yL_y^\dagger PAQ-PAQ \Vert & \le & 2\Vert PAQ\tilde{G}F-PAQ\Vert+2\Vert         F \Vert \Vert L_yU_y-PAQ\tilde{G}\Vert \\
			
			&\le &  2 \sqrt{\alpha n + 1}\sigma_{k+1}(A)+ 2 \Vert F \Vert (k(n-k)+1)                 \sigma_{k+1}(A) \\
			& \le & 2 \left( \sqrt{\alpha n + 1}+ \sqrt{\frac{\alpha }{l}} (k(n-k)+1)
			\right)\sigma_{k+1}(A).
		\end{array}
	\end{equation*}
	
\end{proof}

Proof of Theorem \ref{trm:fastRand_lu_err}.
\begin{proof}
	By substituting $L$ and $U$ from Algorithm
	\ref{alg:fast_randomized_lu} we have
	\begin{equation}
	\label{ineq2}
	\begin{array}{lcl}
	\Vert LU - PAQ  \Vert & = & \Vert L_y L_b U_b - PAQ \Vert = \Vert L_yBQ - PAQ \Vert = \\
	& = & \Vert L_y L_y^{\dagger} PXA_{(J,:)}Q - PAQ \Vert  \leq \\
	& \leq & \Vert L_y L_y^{\dagger} PXA_{(J,:)}Q - L_y L_y^{\dagger} PAQ \Vert  + \Vert  L_y L_y^{\dagger} PAQ - PAQ \Vert.
	\end{array}
	\end{equation}	
The first term in the last inequality in Eq. \eqref{ineq2} is bounded in
the following way:
	$$
	\Vert L_y L_y^{\dagger} PXA_{(J,:)}Q - L_y L_y^{\dagger} PAQ \Vert  \leq \Vert L_y L_y^{\dagger} P \Vert \Vert XA_{(J,:)} - A \Vert  \Vert  Q\Vert   =     \Vert XA_{(J,:)} - A
	\Vert .
	$$
	By using Lemma \ref{lem:ID_approx} we get
	\begin{equation*}
		\Vert LU - PAQ  \Vert = \Vert L_y L_b U_b - PAQ \Vert \le \left( 1 +\sqrt{1 + 4k(n - k)}\right)\sqrt{1+7n/l}\sigma_{k+1}(A)
	\end{equation*}
	with probability of not less than $1 - 3k^{-1}$.
	
	The second term $\Vert  L_y L_y^{\dagger} PAQ - PAQ \Vert$ in the
	last inequality of Eq. \eqref{ineq2} is bounded by Lemma
	\ref{lem:srftLUbound}.
	
	By combining these results we get
	\begin{equation*}
		\begin{array} {llll}
			\Vert LU-PAQ \Vert  & \le && \left( \left[ 1 +\sqrt{1 + 4k(n - k)}\right]\sqrt{1+7n/l}\right)\sigma_{k+1}(A) \\
			&       &+& 2 \left( \sqrt{\alpha   n + 1}+ \sqrt{\frac{\alpha }{l}} (k(n-k)+1) \right)\sigma_{k+1}(A)
		\end{array}
	\end{equation*}
	which completes the proof.
\end{proof}

\section{Numerical Results}
\label{sec:numerical_results}
In order to evaluate Algorithm \ref{alg:randomized_lu},
we present the numerical results by comparing the performances of several randomized low rank approximation algorithms.
We tested the algorithms and compared them by applying them to random matrices and to images. 
All the results were computed using the standard MATLAB libraries including MATLAB's GPU interface
on a machine with two Intel Xeon CPUs X5560 2.8GHz that contains an nVidia GPU GTX TITAN card.

\subsection{Error Rate and Computational Time Comparisons}
\label{sub:error1}
The performance of the randomized LU (Algorithm \ref{alg:randomized_lu})  was tested and compared to the 
randomized SVD and to the randomized ID (see \cite{randecomp,halko2011finding}).
The tests compare the normalized (relative) error of the low rank approximation 
obtained by the examined methods. In addition, the computational time of each method was measured. 
If $A$ is the original matrix and $\hat{A}$ is a low rank approximation of $A$, 
then the relative approximation error  is given by:
\begin{equation}
\label{eq:relErr}
\text{err} = \frac{\Vert A-\hat{A} \Vert}{\Vert A \Vert}.
\end{equation}
We compared  the low rank approximation achieved by the application of the randomized SVD, 
randomized ID and randomized LU with different ranks $k$.
Throughout the experiments, we chose $l=k+3$ and the test matrix was a random 
matrix of size $3000 \times 3000$ with exponentially decaying singular values. 
The computations of the algorithms were done in a single precision.
The comparison results are presented in Fig. \ref{fig:random_error_single}.
The experiment shows that the error of the randomized ID is  larger than 
the error obtained from both the randomized SVD and the randomized LU (Algorithm \ref{alg:randomized_lu}), which are almost identical. 
In addition, we compared  the execution time of these algorithms.
The results are presented in Fig. \ref{fig:random_time_single_eight_cores}.
The results show that the execution time of the randomized LU (Algorithm \ref{alg:randomized_lu}) is  lower than the 
execution time of the randomized SVD and the randomized ID algorithms.
The LU factorization has a parallel implementation (see \cite{golub2012matrix} section 3.6). 
To see the impact of the parallel LU decomposition implementation, 
the execution time to compute the randomized LU of a matrix of size $3000 \times 3000$ was 
measured on an nVidia GTX TITAN GPU device and it is shown in Fig. \ref{fig:random_time_single_gpu}.
The execution time on the GPU was $10$ times faster than running it on an eight cores CPU. 
Thus, the algorithm scales well. 
For larger matrices ($n$ and $k$ are large), the differences between the performances while running on CPU and on GPU are more significant.

\begin{figure}[H]
	\centering
		\includegraphics[scale=0.1]{./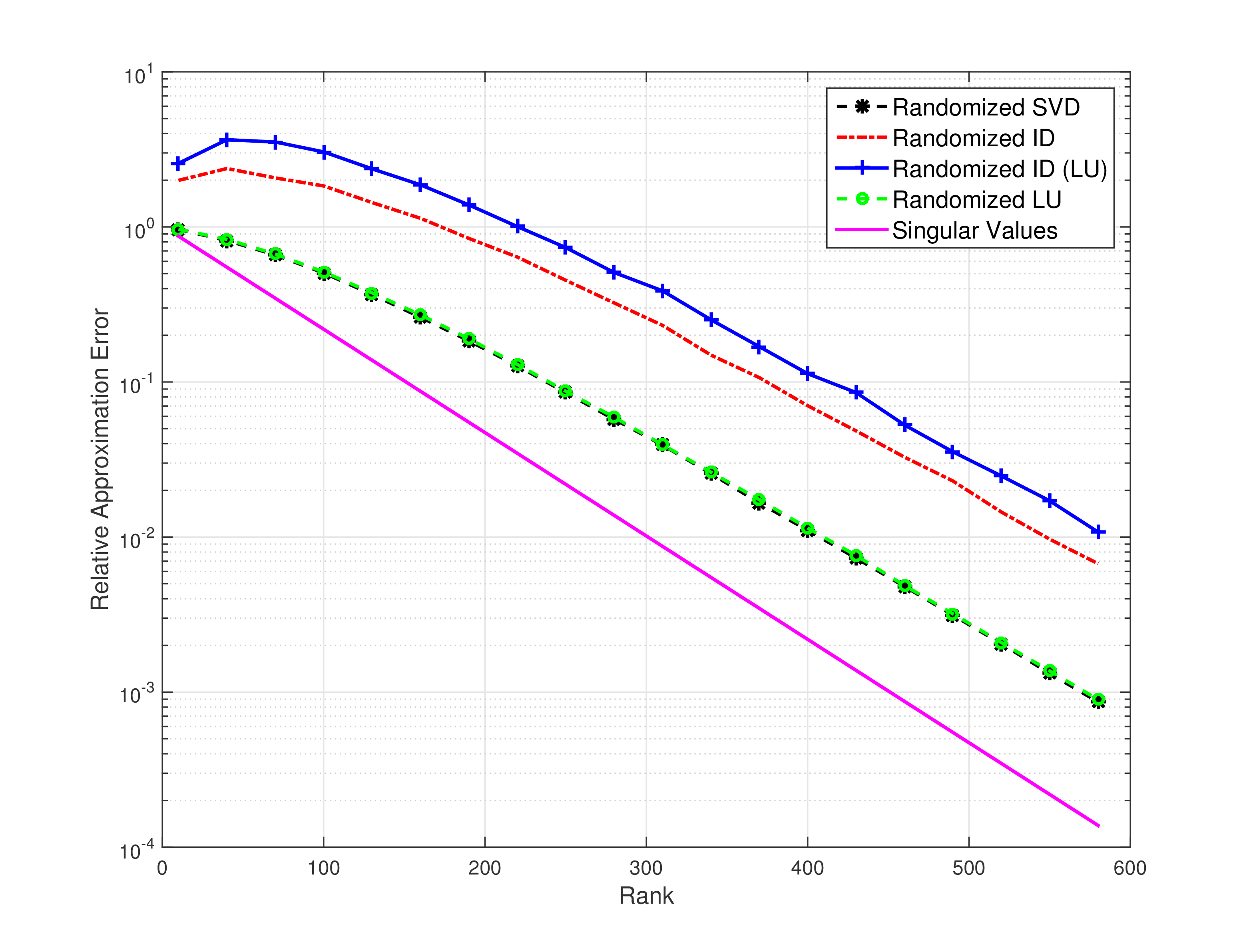}
	\caption{Low rank approximation error of different algorithms: Randomized SVD, Randomized ID (QR and LU) and Randomized LU with respect to the real singular values of the testing matrix.}
	\label{fig:random_error_single}
\end{figure}

\begin{figure}[H]
	\centering
		\includegraphics[scale=0.1]{./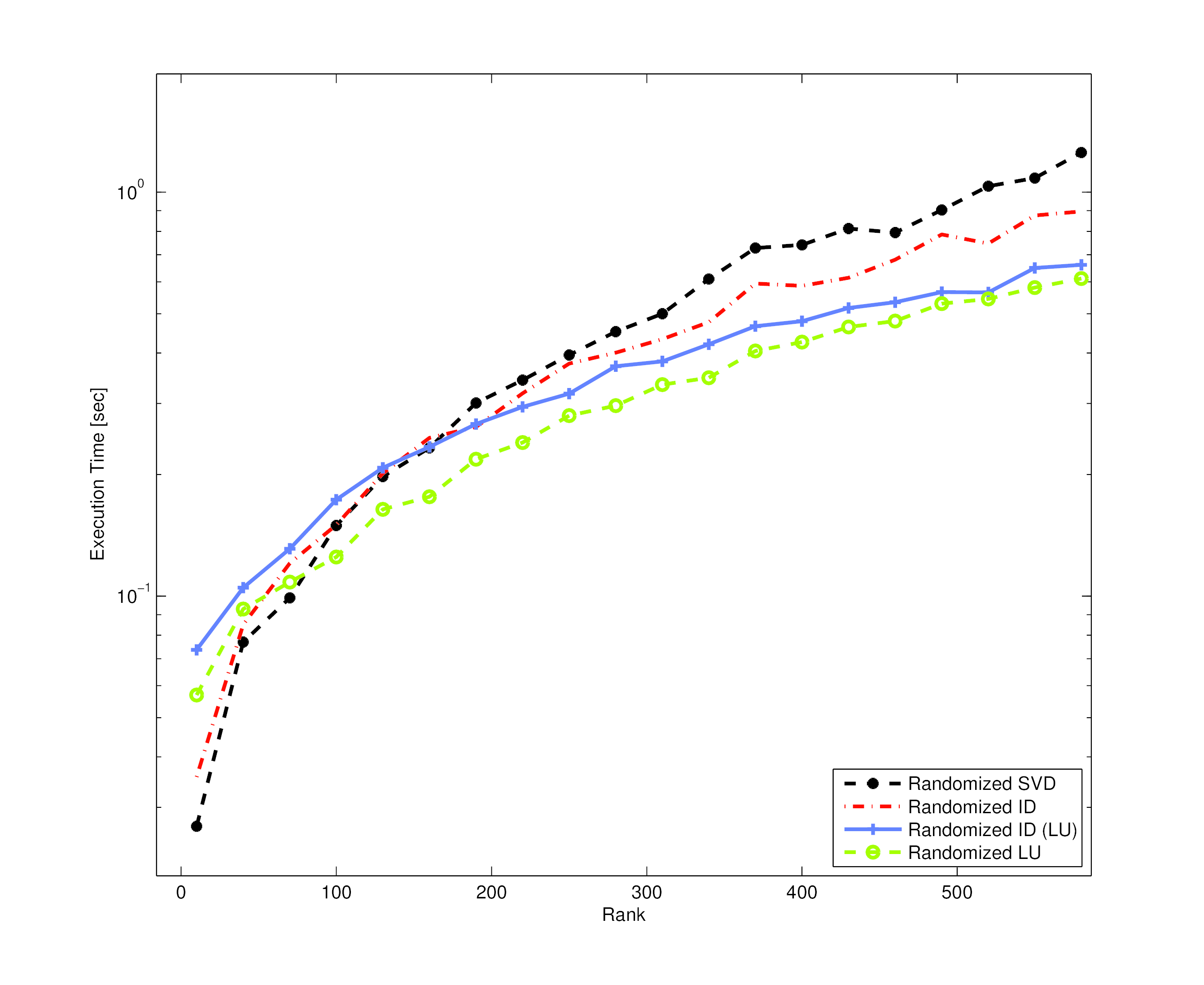}
	\caption{The execution times of the same algorithms as in Fig. \ref{fig:random_error_single} running on a CPU.}
	\label{fig:random_time_single_eight_cores}
\end{figure}

\begin{figure}[H]
	\centering
		\includegraphics[scale=0.1]{./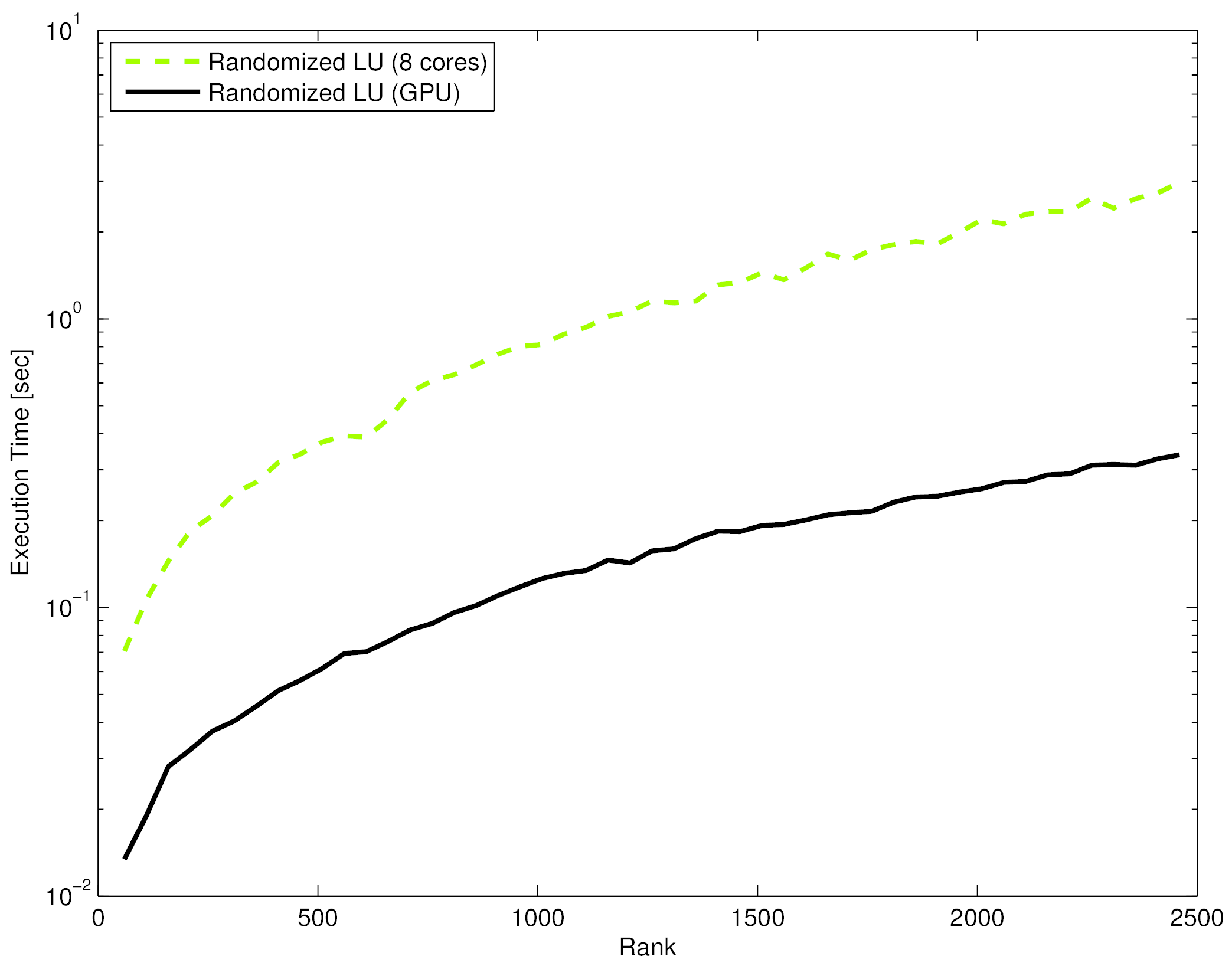}
	\caption{The execution times from running Algorithm \ref{alg:randomized_lu} on different 
computational platforms: CPU with 8 cores and GPU.}
	\label{fig:random_time_single_gpu}
\end{figure}

\subsection{Power Iterations}
The performance (error wise) of Algorithm \ref{alg:randomized_lu} can be further improved by the application of power iterations to the input matrix. Specifically, by replacing the projection step $Y \leftarrow AG$ with 
$Y\leftarrow (AA^*)^qAG$ for small integer $q$ (for larger values, a normalized scheme has to be used -- see \cite{martinsson2010normalized}). Applying the power iterations scheme to an exponentially decaying singular values, an improvement is achieved even for $q=1$. The same random matrix, which was described in section \ref{sub:error1}, is tested again with and without power iterations on both GPU and CPU. This time, double precision is used.

\begin{figure}[H]
	\centering
	\includegraphics[scale=0.1]{./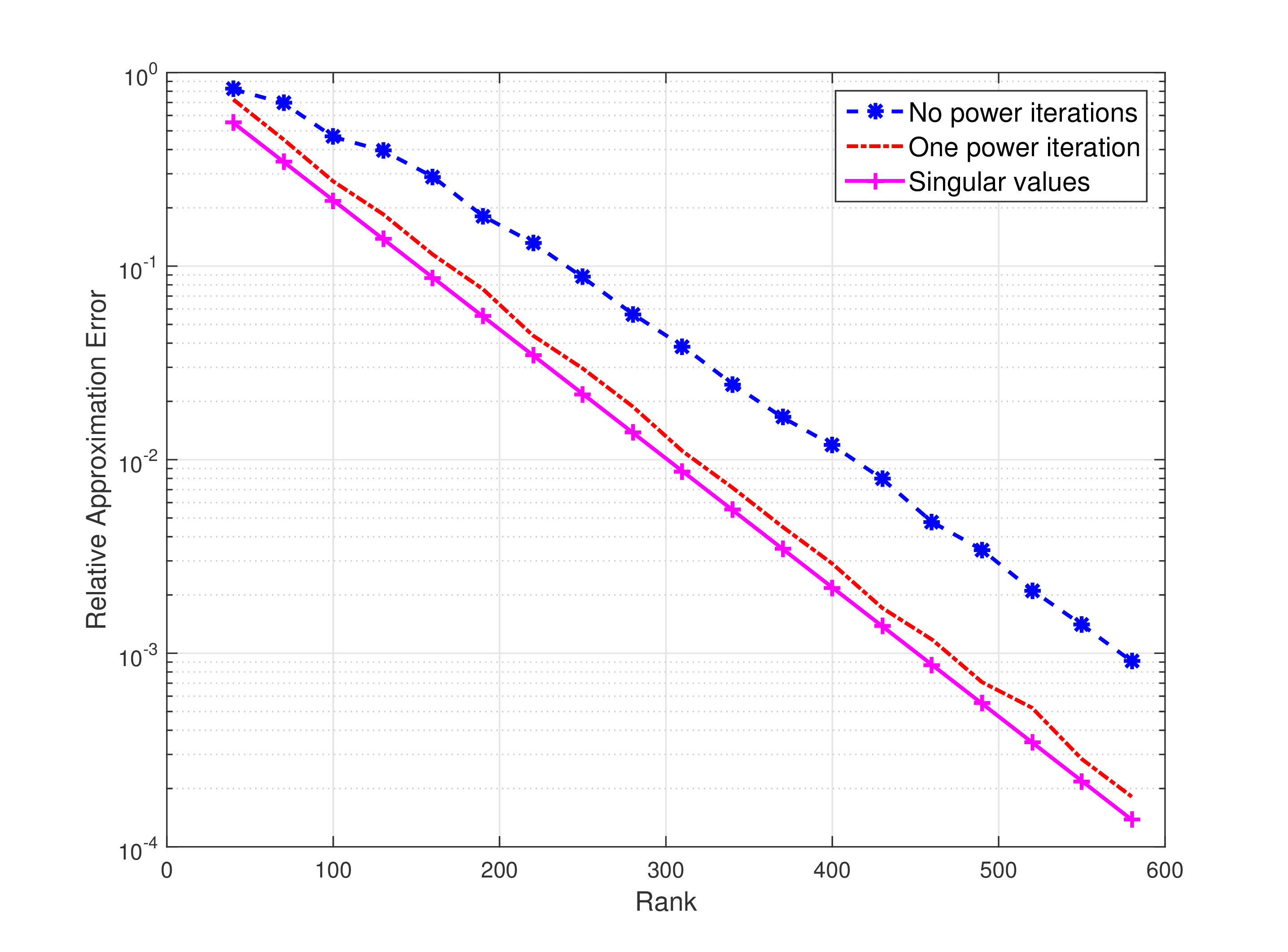}
	\caption{Low rank approximation using randomized LU decomposition (Algorithm \ref{alg:randomized_lu}) with and without power iterations.}
	\label{fig:power_iter_err}
\end{figure}

\begin{figure}[H]
	\centering
	\includegraphics[scale=0.13]{./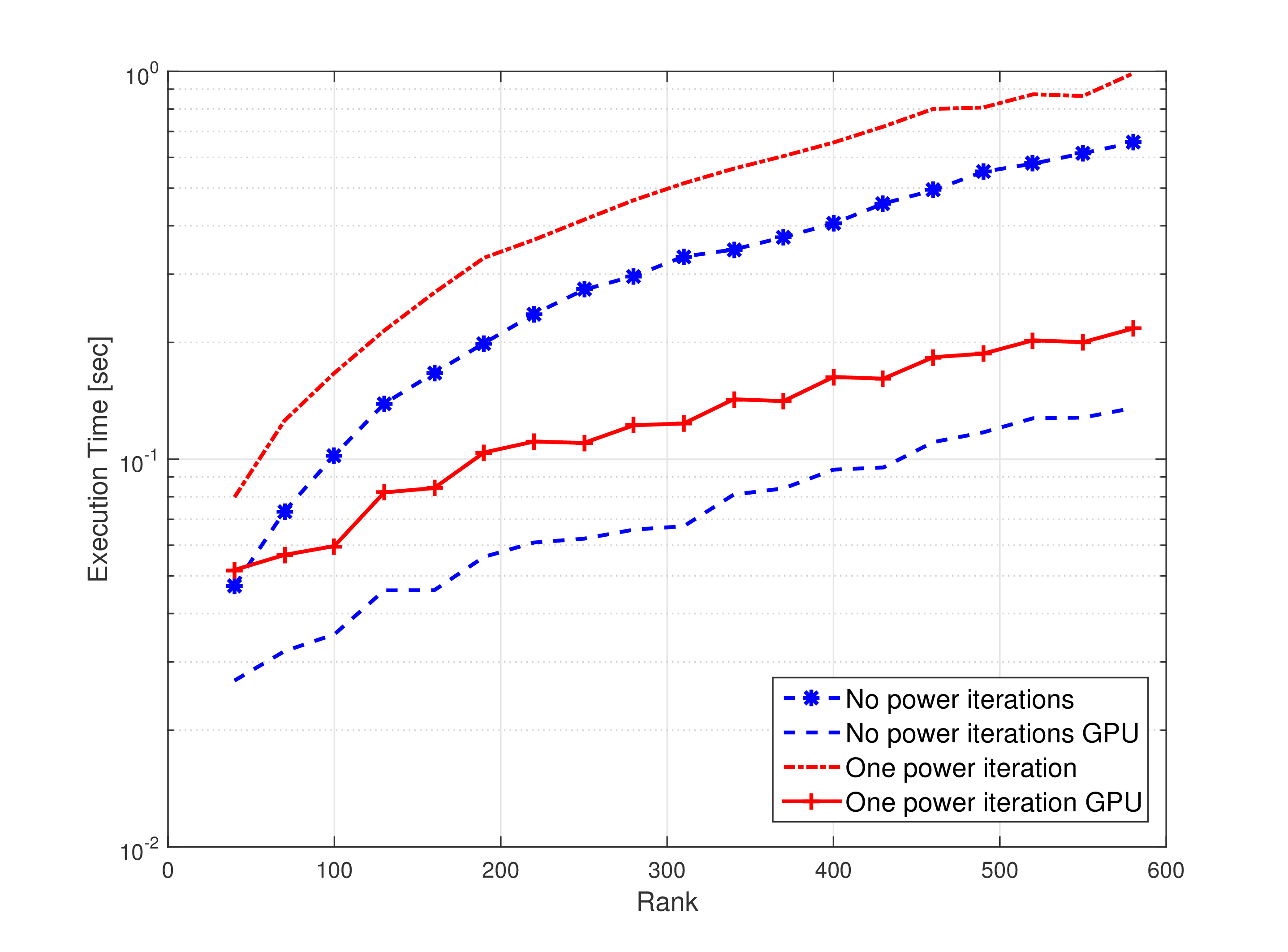}
	\caption{The execution time on CPU and GPU of the randomized LU with and without power iterations.}
	\label{fig:power_iter_time}
\end{figure}

The experiment was repeated for a slowly decaying singular values. The decay of the singular values was proportional to $1/k^2$. To make the decay slower than $1/k^2$ from $k=1$, a factor was added such that $\sigma_k=\frac{100}{(9+k)^2}$. The singular values were normalized such that $\sigma_1=1$. The results are shown in Fig. \ref{fig:power_iter_slow}.

\begin{figure}[H]
	\centering
	\includegraphics[scale=0.1]{./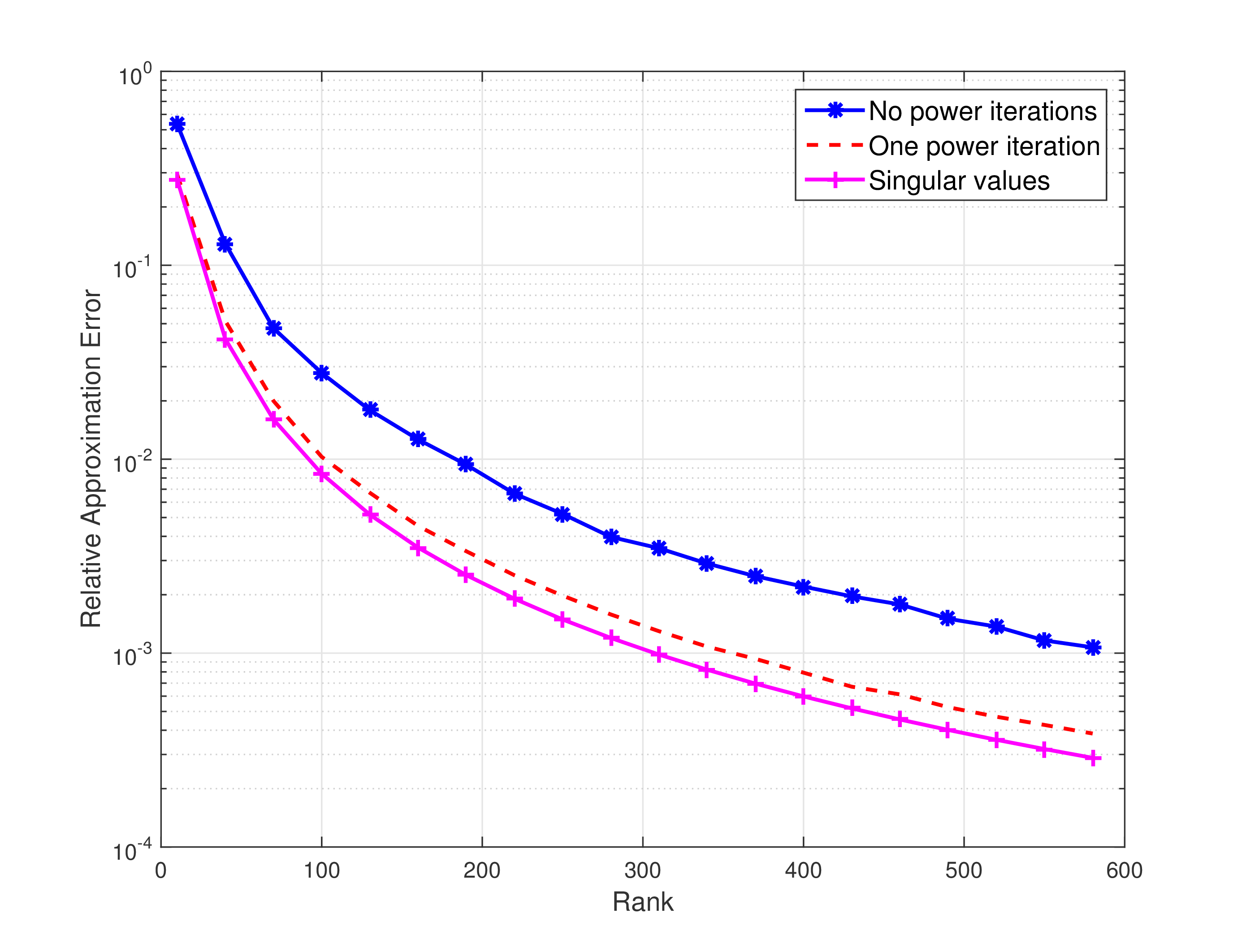}
	\caption{Low rank approximation using randomized LU decomposition with and without power iterations for slowly decaying singular values.}
	\label{fig:power_iter_slow}
\end{figure}

\subsection{Image Matrix Factorization}
Algorithm \ref{alg:randomized_lu} was applied to images given in a matrix form. 
The factorization error and the execution time were compared to the performances of the randomized SVD and to the randomized ID. 
We also added the SVD error and execution time computed by the Lanczos bidiagonalization \cite{golub2012matrix} that is implemented in the PROPACK package \cite{propack}.
The image size was $ 2124 \times 7225$ pixels and it has $256$ gray levels. The parameters were
$k=200$ and $ l=203$. The approximation quality (error) was measured in PSNR defined by
\begin{equation}
\text{PSNR}=20\log_{10}\frac{\max_A\sqrt{N}}{\Vert A-\hat{A}\Vert_F}
\end{equation}
where $A$ is the original image, $\hat{A}$ is the approximated image (the output from Algorithm \ref{alg:randomized_lu}), $\max_A$ is the maximal pixel 
value of $A$, $N$ is the total number of pixels and $\Vert \cdot \Vert_F$ is the Frobenius norm.

\begin{figure}[H]
	\centering
		\includegraphics[width=1.0\textwidth]{./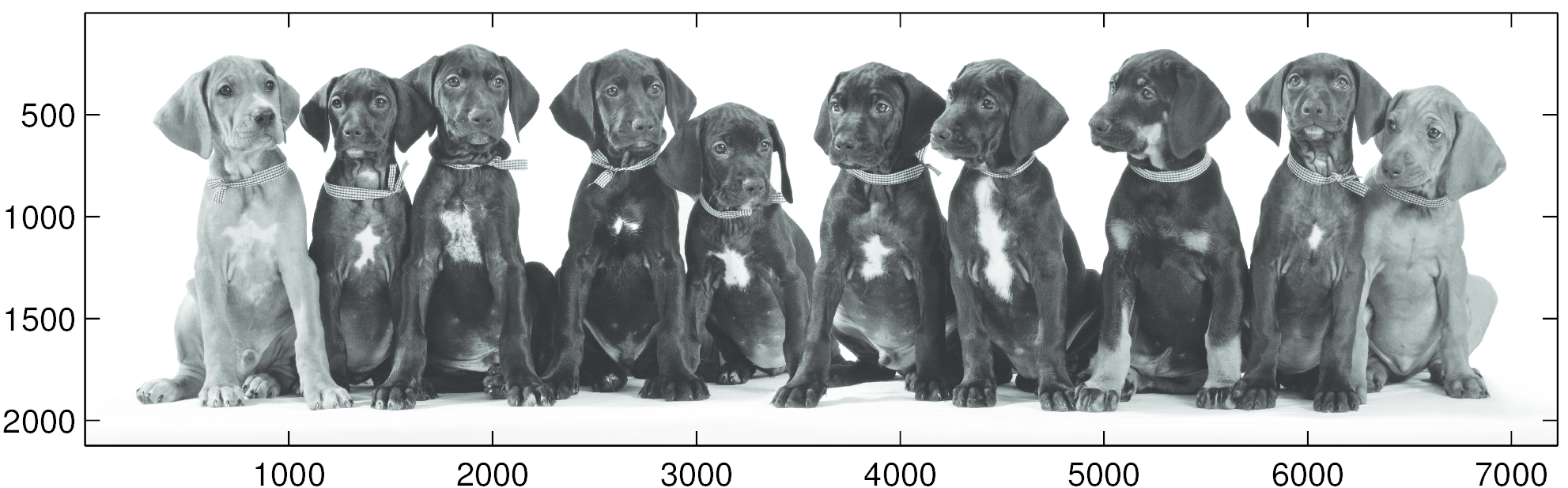}
	\caption{The original input image of size $2124 \times 7225$ that was factorized by the application of the randomized LU, randomized ID and randomized SVD algorithms.}
	\label{fig:puppies_image}
\end{figure}
\begin{figure}[H]
	\centering
		\includegraphics[width=1.0\textwidth]{./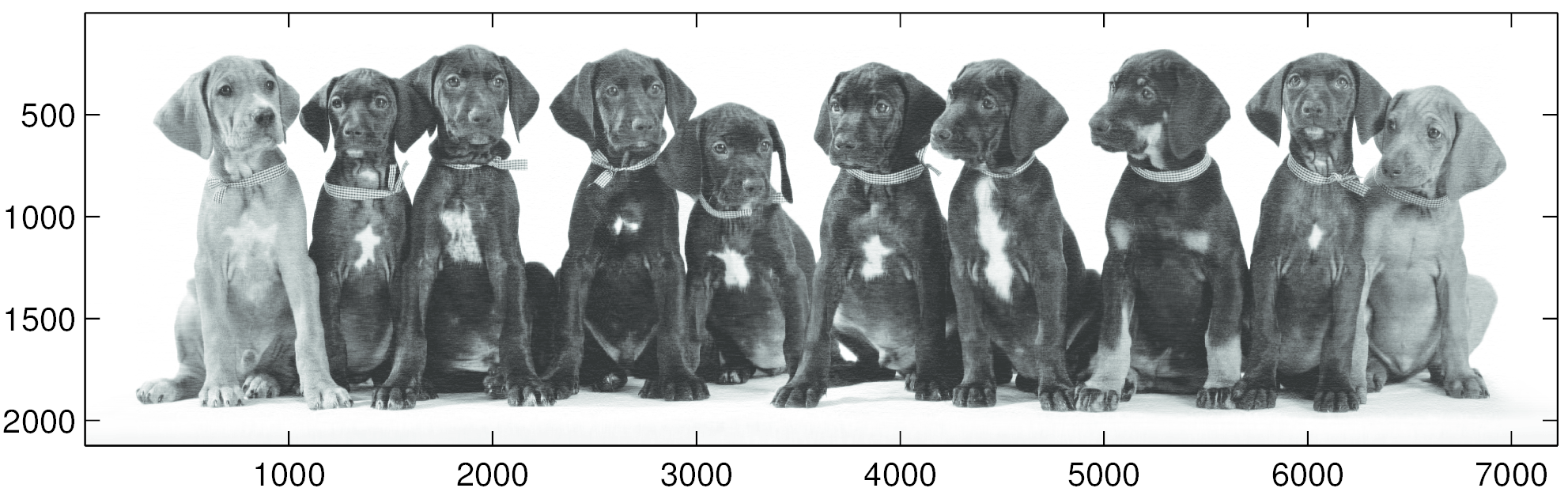}
	\caption{The reconstructed image from the application of the randomized LU factorization with $k=200$ and $l=203$.}
	\label{fig:puppies_image_lu}
\end{figure}

Figures \ref{fig:puppies_image} and \ref{fig:puppies_image_lu} show the original and the reconstructed images, respectively.
The image reconstruction quality (measured in PSNR) related to rank $k$ is shown in Fig. \ref{fig:puppies_error} where for the same $k$, the PSNR from the application of 
Algorithm \ref{alg:randomized_lu} is higher than the PSNR generated by the application of the randomized ID and almost identical to the randomized SVD. 
In addition, the PSNR values are close to the result achieved by the application of the Lanczos SVD which is the best possible rank $k$ approximation.
The execution time of each algorithm is shown in Fig. \ref{fig:puppies_time}. All the computations were done in double precision.
Here, the randomized LU is faster than all the other compared methods making it applicable for real time applications.

\begin{figure}[H]
	\centering
		\includegraphics[scale=0.1]{./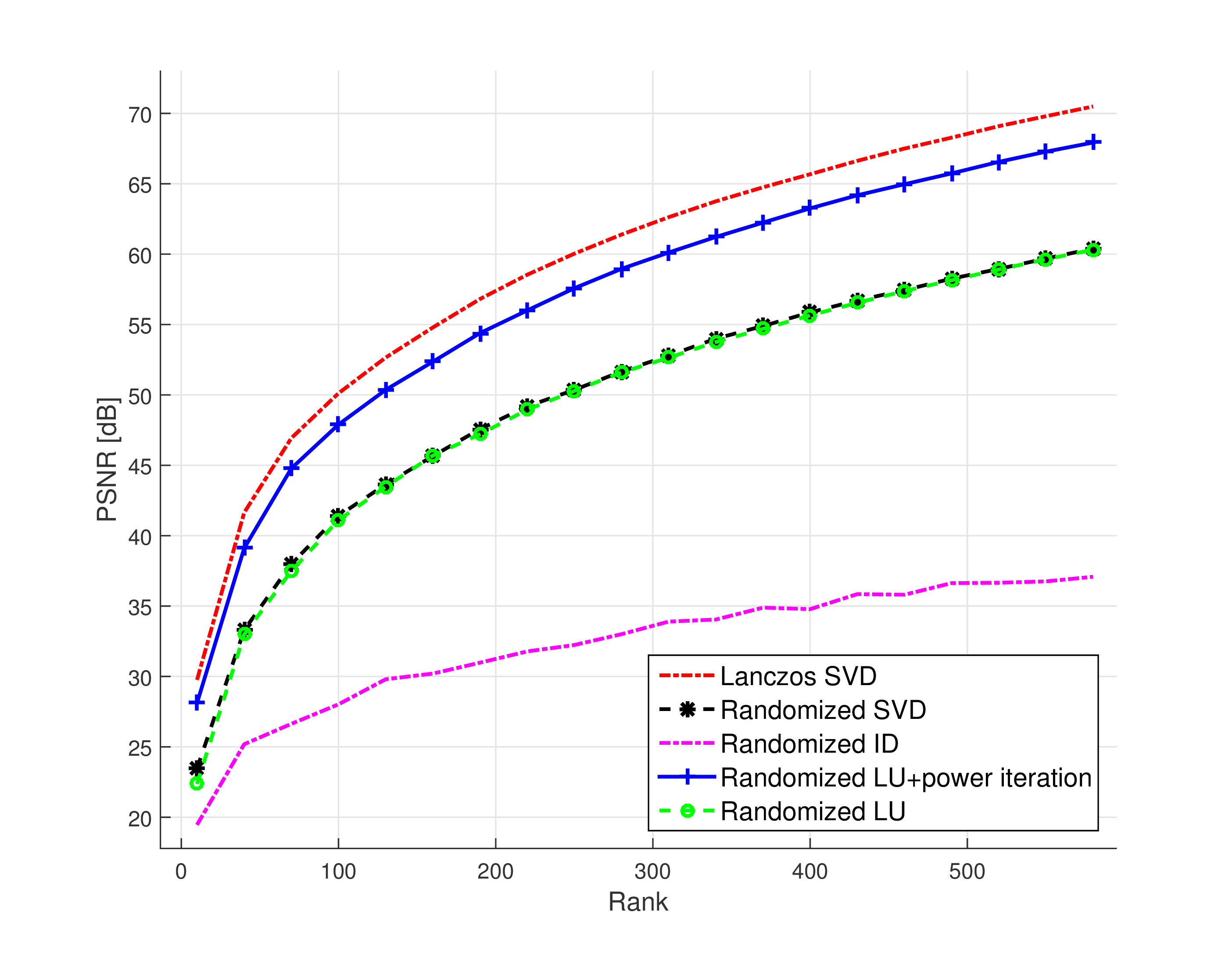}
	\caption{PSNR values from image reconstruction application 
using randomized LU, randomized ID,  randomized SVD and Lanczos SVD algorithms.}
	\label{fig:puppies_error}
\end{figure}
\begin{figure}[H]
	\centering
		\includegraphics[scale=0.1]{./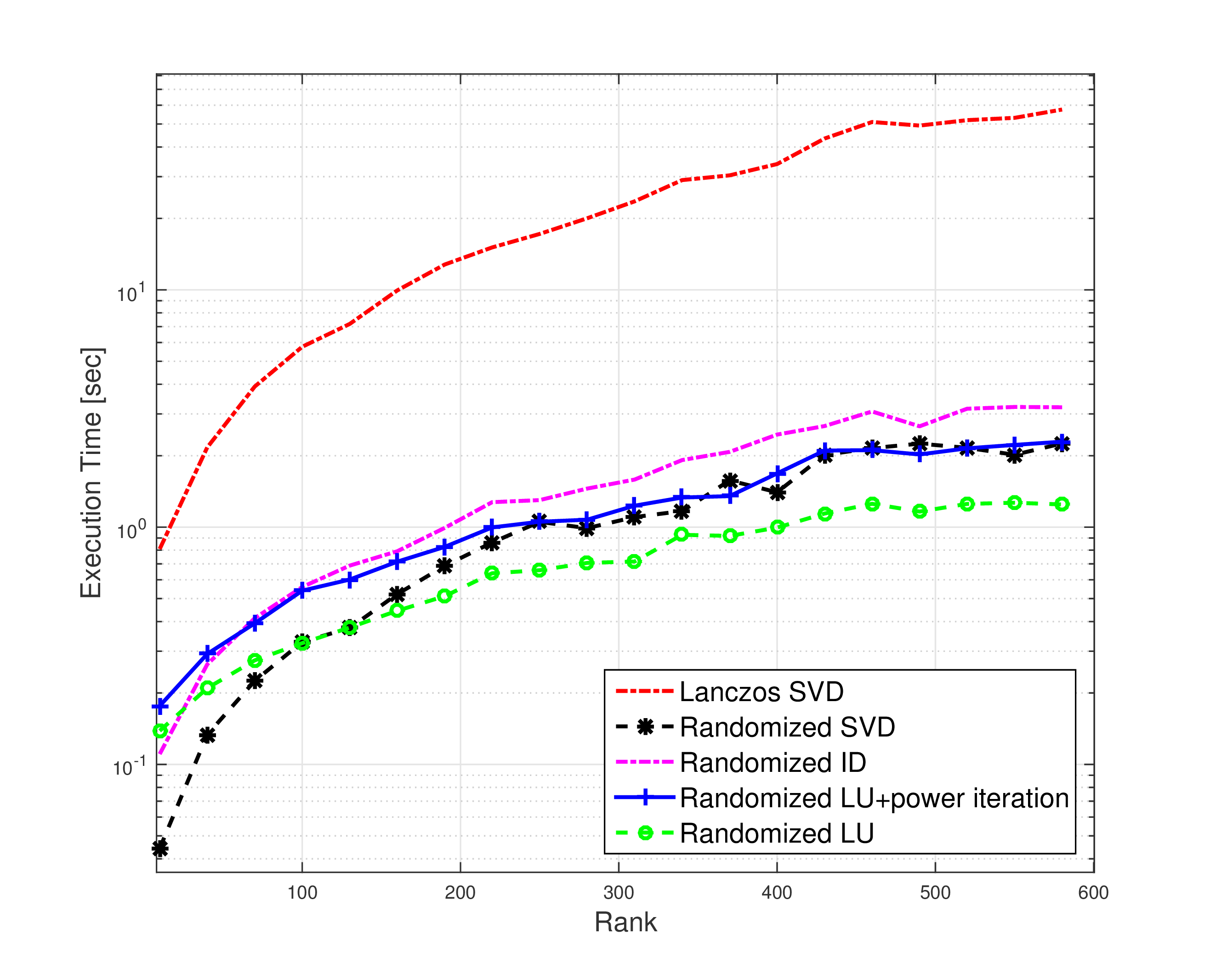}
	\caption{The execution time of the randomized LU, randomized ID,  randomized SVD and Lanczos SVD algorithms.}
	\label{fig:puppies_time}
\end{figure}

\subsection{Fast Randomized LU}
In order to compare the decomposition
running time for Algorithms \ref{alg:randomized_lu} and \ref{alg:fast_randomized_lu}, we apply these algorithms to different
matrix sizes.

The y-axis in Fig. \ref{fig:randLUvsfastRandLU} is the time (in
seconds) for decomposing an $n \times n$ matrix with $l =
3\log_2^2n$ where $n$ is the x-axis.
\begin{figure}[H]
	\centering
	\includegraphics[scale=0.12]{./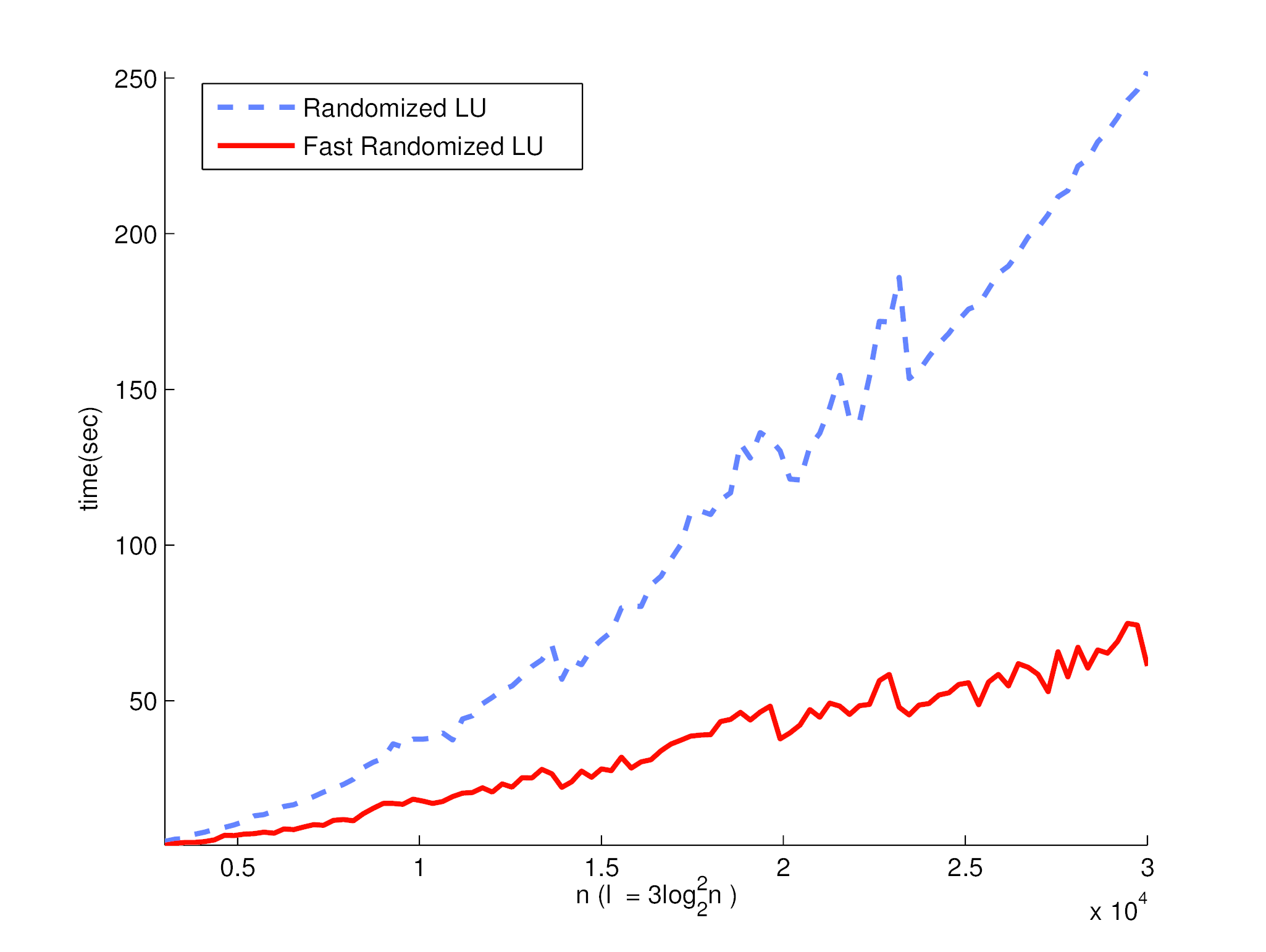}
	\caption{Running time of the fast randomized LU and the randomized LU algorithms}
	\label{fig:randLUvsfastRandLU}
\end{figure}
In addition, we see in Fig. \ref{fig:randLUvsfastRandLUErr} that the error from Algorithm  \ref{alg:fast_randomized_lu} is larger than the error that Algorithm \ref{alg:randomized_lu} generates. Both errors decrease at the same rate. Figure \ref{fig:randLUvsfastRandLUErr}, like Fig. \ref{fig:random_error_single}, shows the relative error (Eq. \eqref{eq:relErr}) for a randomly chosen matrix of size $3000 \times 3000$ with exponentially decaying singular values where $l = k+3$ for different $k$ values.
\begin{figure}[H]
 	\centering
 	\includegraphics[scale=0.1]{./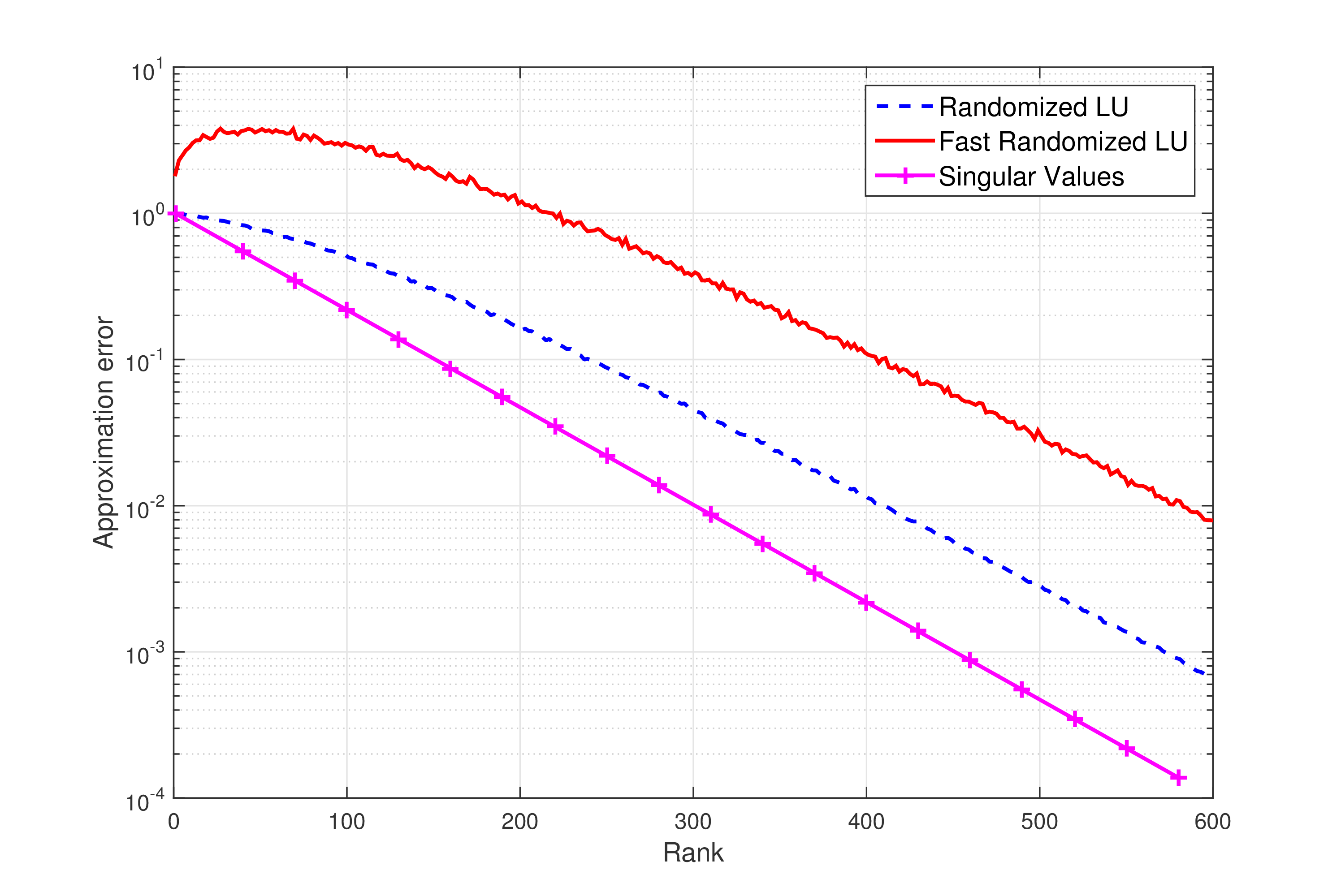}
 	\caption{The normalized error (Eq. \eqref{eq:relErr}) from the fast randomized LU and the randomized LU algorithms.}
 	\label{fig:randLUvsfastRandLUErr}
\end{figure}

The experiment from section \ref{sub:error1} was repeated, with a slowly decaying singular values. The decay of the singular values is the same as was used for the power iterations comparison $\sigma_k=\frac{100}{(9+k)^2}$. These results appear in Fig. \ref{fig:randLUvsfastRandLUErrSlow}
\begin{figure}[H]
	\centering
	\includegraphics[scale=0.1]{./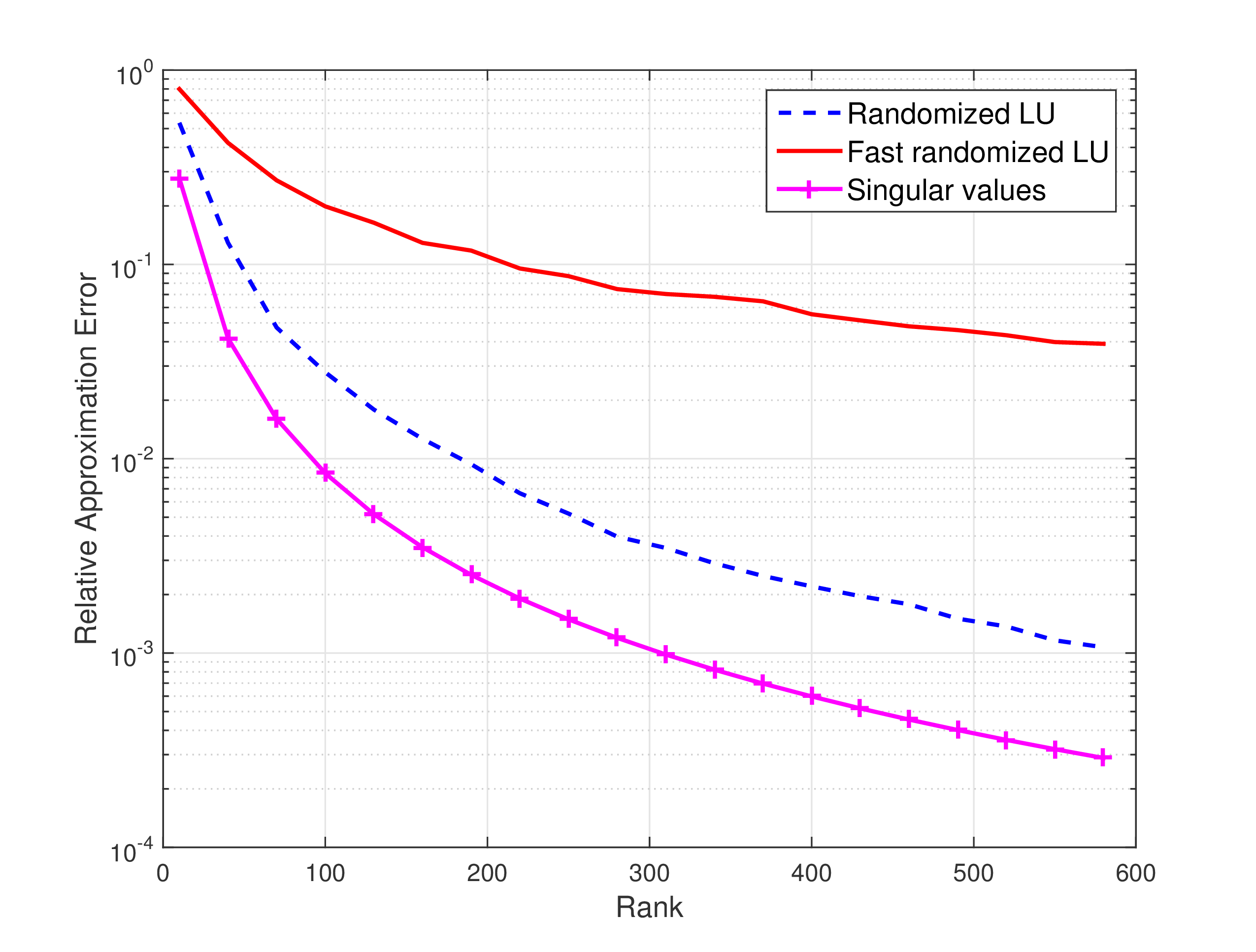}
	\caption{The normalized error from the fast randomized LU and the randomized LU algorithms for slowly decaying singular values.}
	\label{fig:randLUvsfastRandLUErrSlow}
\end{figure}
The reason the error of the fast randomized LU is larger than the error of the randomized LU is due to the fact that the space from which the projections are chosen is much smaller than the space created by the Gaussian-based random projection. The space is smaller since the projection matrix contains only a random diagonal matrix and a random column selection matrix. This large error is also reflected in the error bounds of the algorithm (Theorem \ref{trm:fastRand_lu_err}) and also in the need for a larger $l$ compared to $k$ (Lemma \ref{lem:SRFTerrbnd}).

\section*{Conclusion}
\label{sec:conclusion}
In this work, we presented a randomized algorithm for computing an LU rank $k$ decomposition. 
Given an integer $k$, the algorithm finds an LU decomposition where both $L$ and $U$ are of rank $k$ with negligible failure probability. 
Error bounds for the approximation of the input matrix were derived, and were proved to be proportional to the ($k+1$)th singular value.
The performance of the algorithm (error and computational time) was compared to the randomized SVD, randomized ID and to the application of Lanczos SVD. 
We also showed that the algorithm can be parallelized since it consists mostly of matrix multiplication and pivoted LU. 
The results on GPU show that it is possible to reduce the computational time  
significantly by even using only the standard MATLAB libraries.

\section*{Acknowledgment}
\noindent This research was partially supported by the Israel Science
Foundation (Grant No. 1041/10), by the
Israeli Ministry of Science \& Technology (Grants No. 3-9096, 3-10898), by
US - Israel Binational Science Foundation (BSF 2012282) 
and by a Fellowship from Jyv\"{a}skyl\"{a} University.
The authors would like to thank Yoel Shkolnisky for the helpful discussions.

\bibliographystyle{plain}
\bibliography{FastRandomizedLU}

\begin{thebibliography}{10}

\bibitem{achlioptas2007fast}
Dimitris Achlioptas and Frank Mcsherry.
\newblock Fast computation of low-rank matrix approximations.
\newblock {\em Journal of the ACM (JACM)}, 54(2):9, 2007.

\bibitem{AC}
N.~Ailon and B.~Chazelle.
\newblock The fast {J}ohnson--{L}indenstrauss transform and approximate nearest
  neighbors.
\newblock {\em SIAM J. Computing}, 39(1):302--322, 2009.

\bibitem{avron2010blendenpik}
Haim Avron, Petar Maymounkov, and Sivan Toledo.
\newblock Blendenpik: Supercharging {LAPACK}'s least-squares solver.
\newblock {\em SIAM Journal on Scientific Computing}, 32(3):1217--1236, 2010.

\bibitem{bermanis2011multiscale}
A.~Bermanis, A.~Averbuch, and R.R. Coifman.
\newblock Multiscale data sampling and function extension.
\newblock {\em Applied and Computational Harmonic Analysis}, 34:15--29, 2013.

\bibitem{bhatia1997matrix}
Rajendra Bhatia.
\newblock {\em Matrix analysis}, volume 169.
\newblock Springer, 1997.

\bibitem{boutsidis2013improved}
Christos Boutsidis and Alex Gittens.
\newblock Improved matrix algorithms via the subsampled randomized {H}adamard
  transform.
\newblock {\em SIAM Journal on Matrix Analysis and Applications},
  34(3):1301--1340, 2013.

\bibitem{chan1987rank}
Tony~F Chan.
\newblock Rank revealing {QR} factorizations.
\newblock {\em Linear Algebra and Its Applications}, 88:67--82, 1987.

\bibitem{chen2005condition}
Zizhong Chen and Jack~J Dongarra.
\newblock Condition numbers of {G}aussian random matrices.
\newblock {\em SIAM Journal on Matrix Analysis and Applications},
  27(3):603--620, 2005.

\bibitem{low_rank_comp}
H.~Cheng, Z.~Gimbutas, P.G. Martinsson, and V.~Rokhlin.
\newblock On the compression of low rank matrices.
\newblock {\em SIAM Journal on Scientific Computing}, 26(4):1389--1404, 2005.

\bibitem{clarkson2009numerical}
Kenneth~L Clarkson and David~P Woodruff.
\newblock Numerical linear algebra in the streaming model.
\newblock In {\em Proceedings of the 41st annual ACM symposium on Theory of
  computing}, pages 205--214. ACM, 2009.

\bibitem{clarkson2013low}
Kenneth~L Clarkson and David~P Woodruff.
\newblock Low rank approximation and regression in input sparsity time.
\newblock In {\em Proceedings of the 45th annual ACM symposium on Symposium on
  theory of computing}, pages 81--90. ACM, 2013.

\bibitem{COIFMAN2006a}
Ronald~R. Coifman and St\'{e}phane Lafon.
\newblock Diffusion maps.
\newblock {\em Applied and Computational Harmonic Analysis}, 21(1):5--30, 2006.

\bibitem{gild:PHD}
G.~David.
\newblock {\em Anomaly Detection and Classification via Diffusion Processes in
  Hyper-Networks}.
\newblock PhD thesis, School of Computer Science, Tel Aviv University, March
  2009.

\bibitem{davis1997unsymmetric}
Timothy~A Davis and Iain~S Duff.
\newblock An unsymmetric-pattern multifrontal method for sparse {LU}
  factorization.
\newblock {\em SIAM Journal on Matrix Analysis and Applications},
  18(1):140--158, 1997.

\bibitem{demmel1999supernodal}
James~W Demmel, Stanley~C Eisenstat, John~R Gilbert, Xiaoye~S Li, and Joseph~WH
  Liu.
\newblock A supernodal approach to sparse partial pivoting.
\newblock {\em SIAM Journal on Matrix Analysis and Applications},
  20(3):720--755, 1999.

\bibitem{donoho2006compressed}
David~L Donoho.
\newblock Compressed sensing.
\newblock {\em Information Theory, IEEE Transactions on}, 52(4):1289--1306,
  2006.

\bibitem{drineas2006fast}
Petros Drineas, Ravi Kannan, and Michael~W Mahoney.
\newblock Fast monte carlo algorithms for matrices {II}: Computing a low-rank
  approximation to a matrix.
\newblock {\em SIAM Journal on Computing}, 36(1):158--183, 2006.

\bibitem{drineas2008relative}
Petros Drineas, Michael~W Mahoney, and S~Muthukrishnan.
\newblock Relative-error {CUR} matrix decompositions.
\newblock {\em SIAM Journal on Matrix Analysis and Applications},
  30(2):844--881, 2008.

\bibitem{elad2006image}
Michael Elad and Michal Aharon.
\newblock Image denoising via sparse and redundant representations over learned
  dictionaries.
\newblock {\em Image Processing, IEEE Transactions on}, 15(12):3736--3745,
  2006.

\bibitem{frieze2004fast}
Alan Frieze, Ravi Kannan, and Santosh Vempala.
\newblock Fast monte-carlo algorithms for finding low-rank approximations.
\newblock {\em Journal of the ACM (JACM)}, 51(6):1025--1041, 2004.

\bibitem{goldstine1951numerical}
Herman~H Goldstine and John Von~Neumann.
\newblock Numerical inverting of matrices of high order {II}.
\newblock {\em Proceedings of the American Mathematical Society},
  2(2):188--202, 1951.

\bibitem{golub2012matrix}
Gene~H Golub and Charles~F Van~Loan.
\newblock {\em Matrix computations}, volume~4.
\newblock John Hopkins University Press, 2012.

\bibitem{gu1996efficient}
Ming Gu and Stanley~C Eisenstat.
\newblock Efficient algorithms for computing a strong rank-revealing {QR}
  factorization.
\newblock {\em SIAM Journal on Scientific Computing}, 17(4):848--869, 1996.

\bibitem{halko2011finding}
Nathan Halko, Per-Gunnar Martinsson, and Joel~A Tropp.
\newblock Finding structure with randomness: Probabilistic algorithms for
  constructing approximate matrix decompositions.
\newblock {\em SIAM review}, 53(2):217--288, 2011.

\bibitem{kirk2007nvidia}
David Kirk.
\newblock n{V}idia {CUDA} software and {GPU} parallel computing architecture.
\newblock In {\em ISMM}, volume~7, pages 103--104, 2007.

\bibitem{koren2009matrix}
Yehuda Koren, Robert Bell, and Chris Volinsky.
\newblock Matrix factorization techniques for recommender systems.
\newblock {\em Computer}, 42(8):30--37, 2009.

\bibitem{propack}
RM~Larsen.
\newblock Lanczos bidiagonalization with partial reorthogonalization.
\newblock Technical Report DAIMI PB-357, Department of Computer Science, Aarhus
  University, 1998.

\bibitem{litvak2010}
AE~Litvak and O~Rivasplata.
\newblock Smallest singular value of sparse random matrices.
\newblock {\em Stud. Math}, 212:195--218, 2010.

\bibitem{litvak2005smallest}
Alexander~E Litvak, Alain Pajor, Mark Rudelson, and Nicole Tomczak-Jaegermann.
\newblock Smallest singular value of random matrices and geometry of random
  polytopes.
\newblock {\em Advances in Mathematics}, 195(2):491--523, 2005.

\bibitem{magen2011low}
Avner Magen and Anastasios Zouzias.
\newblock Low rank matrix-valued chernoff bounds and approximate matrix
  multiplication.
\newblock In {\em Proceedings of the Twenty-Second Annual ACM-SIAM Symposium on
  Discrete Algorithms}, pages 1422--1436. SIAM, 2011.

\bibitem{martinsson2010normalized}
Per-Gunnar Martinsson, Arthur Szlam, and Mark Tygert.
\newblock Normalized power iterations for the computation of svd.
\newblock {\em Manuscript., Nov}, 2010.

\bibitem{randecomp}
P.G. Martinsson, V.~Rokhlin, and M.~Tygert.
\newblock A randomized algorithm for the decomposition of matrices.
\newblock {\em Applied and Computational Harmonic Analysis}, 30(1):47--68,
  2011.

\bibitem{mazumder2010spectral}
Rahul Mazumder, Trevor Hastie, and Robert Tibshirani.
\newblock Spectral regularization algorithms for learning large incomplete
  matrices.
\newblock {\em The Journal of Machine Learning Research}, 99:2287--2322, 2010.

\bibitem{miranian2003strong}
L~Miranian and M~Gu.
\newblock Strong rank revealing {LU} factorizations.
\newblock {\em Linear algebra and its applications}, 367:1--16, 2003.

\bibitem{pan2000existence}
C-T Pan.
\newblock On the existence and computation of rank-revealing {LU}
  factorizations.
\newblock {\em Linear Algebra and its Applications}, 316(1):199--222, 2000.

\bibitem{rokhlin2008fast}
Vladimir Rokhlin and Mark Tygert.
\newblock A fast randomized algorithm for overdetermined linear least-squares
  regression.
\newblock {\em Proceedings of the National Academy of Sciences},
  105(36):13212--13217, 2008.

\bibitem{rudelson2009smallest}
Mark Rudelson and Roman Vershynin.
\newblock Smallest singular value of a random rectangular matrix.
\newblock {\em Communications on Pure and Applied Mathematics},
  62(12):1707--1739, 2009.

\bibitem{schenk2000efficient}
Olaf Schenk, Klaus G{\"a}rtner, and Wolfgang Fichtner.
\newblock Efficient sparse {LU} factorization with left-right looking strategy
  on shared memory multiprocessors.
\newblock {\em BIT Numerical Mathematics}, 40(1):158--176, 2000.

\bibitem{shabataccelerating}
Gil Shabat, Yaniv Shmueli, Amit Bermanis, and Amir Averbuch.
\newblock Accelerating particle filter using randomized multiscale and fast
  multipole type methods.
\newblock {\em IEEE Transactions on Pattern Analysis and Machine Intelligence},
  37(7):1396--1407, 2015.

\bibitem{stewart1998triangular}
GW~Stewart.
\newblock The triangular matrices of gaussian elimination and related
  decompositions.
\newblock Technical Report TR-3533, Department of Computer Science and
  Institute for Advanced Computer Studies, University of Maryland, College
  Park, MD, 1995.

\bibitem{stewart2000decompositional}
GW~Stewart.
\newblock The decompositional approach to matrix computation.
\newblock {\em Computing in Science \& Engineering}, 2(1):50--59, 2000.

\bibitem{trefethen1990average}
Lloyd~N Trefethen and Robert~S Schreiber.
\newblock Average-case stability of gaussian elimination.
\newblock {\em SIAM Journal on Matrix Analysis and Applications},
  11(3):335--360, 1990.

\bibitem{tropp2011improved}
Joel~A Tropp.
\newblock Improved analysis of the subsampled randomized hadamard transform.
\newblock {\em Advances in Adaptive Data Analysis}, 3(01n02):115--126, 2011.

\bibitem{witten2013randomized}
Rafi Witten and Emmanuel Candes.
\newblock Randomized algorithms for low-rank matrix factorizations: sharp
  performance bounds.
\newblock {\em Algorithmica}, pages 1--18, 2013.

\bibitem{wolf2003learning}
Lior Wolf and Amnon Shashua.
\newblock Learning over sets using kernel principal angles.
\newblock {\em The Journal of Machine Learning Research}, 4:913--931, 2003.

\bibitem{WLRT}
F.~Woolfe, E.~Liberty, V.~Rokhlin, and M.~Tygert.
\newblock A fast randomized algorithm for the approximation of matricess.
\newblock {\em Applied and Computational Harmonic Analysis}, 25(3):335--366,
  2008.

\end{thebibliography}
\end{document}